\date{October 6, 2009}
\newtheorem{theorem}{Theorem}[section]
\newtheorem{lemma}[theorem]{Lemma}
\newtheorem{proposition}[theorem]{Proposition}
\newtheorem{corollary}[theorem]{Corollary}
\newtheorem{definition}[theorem]{Definition}
\newtheorem{example}[theorem]{Example}
\theoremstyle{remark}
\newtheorem{remark}[theorem]{Remark}
\newcommand{\la}{\langle}
\newcommand{\ra}{\rangle}
\newcommand{\too}{\longrightarrow}
\newcommand{\bd}{\partial}
\newcommand{\x}{\times}
\newcommand{\Vol}{\operatorname{Vol}}
\newcommand{\GL}{\operatorname{GL}}
\newcommand{\Hom}{\operatorname{Hom}}
\newcommand{\im}{\operatorname{im}}
\newcommand{\id}{\operatorname{id}}
\newcommand{\Hol}{\operatorname{Hol}}
\newcommand{\supp}{\operatorname{supp}}
\newcommand{\cA}{{\mathcal A}}
\newcommand{\cC}{{\mathcal C}}
\newcommand{\cM}{{\mathcal M}}
\newcommand{\cL}{{\mathcal L}}
\newcommand{\cS}{{\mathcal S}}
\newcommand{\cT}{{\mathcal T}}
\newcommand{\cV}{{\mathcal V}}
\newcommand{\CC}{{\mathbb C}}
\newcommand{\RR}{{\mathbb R}}
\newcommand{\ZZ}{{\mathbb Z}}
\renewcommand{\a}{\alpha}
\renewcommand{\b}{\beta}
\newcommand{\g}{\gamma}
\newcommand{\fro}{{\mathfrak{o}}}
\begin{document}

\title{Ergodic solenoids and generalized currents}

\subjclass[2000]{Primary: 37A99. Secondary: 58A25, 57R95, 55N45.} \keywords{Real homology,
Ruelle-Sullivan current, solenoid, ergodic}

\author[V. Mu\~{n}oz]{Vicente Mu\~{n}oz}

\address{Facultad de
Matem\'aticas, Universidad Complutense de Madrid, Plaza de Ciencias
3, 28040 Madrid, Spain}

\email{vicente.munoz@mat.ucm.es}

\author[R. P\'{e}rez Marco]{Ricardo P\'{e}rez Marco}
\address{CNRS, LAGA UMR 7539, Universit\'e Paris XIII, 
99, Avenue J.-B. Cl\'ement, 93430-Villetaneuse, France}


\email{ricardo@math.univ-paris13.fr}

\thanks{Partially supported through Spanish MEC grant MTM2007-63582.
Second author supported by CNRS (UMR 7539)}

\maketitle

\begin{abstract}
We introduce the concept of solenoid as an abstract laminated space.
We do a thorough study of solenoids, leading to the notion of
ergodic and uniquely ergodic solenoids.
We define generalized currents associated with immersions of
oriented solenoids endowed with a transversal measure into smooth
manifolds, generalizing Ruelle-Sullivan currents.
\end{abstract}

\section{Introduction} \label{sec:introduction}

This is the first of a series of articles \cite{MPM2,MPM3,MPM4,MPM5}
in which we aim to give a geometric realization of {\em real}
homology classes in smooth manifolds, by using immersed laminations,
which we call {\em solenoids}. In this paper we define these
structures, we carry a thorough study, and we construct the homology
class associated to an oriented measured immersed solenoid in a
smooth manifold.

Let $M$ be a smooth compact connected and oriented manifold of
dimension $n\geq 1$ without boundary. Any closed oriented
submanifold $N\subset M$ of dimension $0\leq k\leq n$ determines a
homology class in $H_k(M, \ZZ)$. This homology class in
$H_k(M,\RR)$, as dual of De Rham cohomology, is explicitly given by
integration of the restriction to $N$ of differential $k$-forms on
$M$. Also, any immersion $f:N \to M$ defines an integer homology
class in a similar way by integration of pull-backs of $k$-forms.
Unfortunately, because of topological reasons dating back to Thom
\cite{Thom1}, not all integer homology classes in $H_k(M,\ZZ )$ can
be realized in such a way. Geometrically, we can realize any class
in $H_k(M, \ZZ)$ by topological $k$-chains. The real homology
$H_k(M,\RR)$ classes are only realized by formal combinations with
real coefficients of $k$-cells. This is not fully satisfactory.
In particular, for a variety of reasons (for example, in
the aim of developing a geometric intersection theory for real
homology classes), it is important to have an explicit realization,
as geometric as possible, of real homology classes.

In 1975, Ruelle and Sullivan \cite{RS} defined, for arbitrary
dimension $0\leq k\leq n$, geometric currents by using oriented
$k$-laminations embedded in $M$ and endowed with a transversal
measure. They applied their results to the stable and unstable
laminations of Axiom A diffeomorphisms (i.e. those with hyperbolic
non-wandering set with a dense set of periodic orbits). The point of
view of Ruelle and Sullivan is also based on duality. The
observation is that $k$-forms can be integrated on each leaf of the
lamination and then all over the lamination using the transversal
measure. This makes sense locally in each flow-box, and then it can
be extended globally by using a partition of unity. The result only
depends on the cohomology class of the $k$-form. It is natural to
ask whether it is possible to realize every real homology class
using a Ruelle-Sullivan current. A first result, with a precedent in
\cite{HM}, confirms that this is not the case: homology classes
with non-zero self-intersection cannot be represented by
Ruelle-Sullivan currents with no compact leaves (see Theorem
\ref{thm:self-intersection}).

More precisely, for each Ruelle-Sullivan lamination with a
non-atomic transversal measure, we can construct a smooth
$(n-k)$-form which provides the dual in de Rham cohomology (see section \ref{sec:forms}).
Using it, we prove that the self-intersection of a Ruelle-Sullivan current
(for a lamination) is zero, therefore it is not possible to represent a real homology
class in $H_k(M,\RR )$ with non-zero self-intersection (see section \ref{sec:embedded}).
This obstruction only exists when $n-k$ is even. This may be the
historical reason behind the lack of results on the representation
of an arbitrary homology class by Ruelle-Sullivan currents. In
section \ref{sec:Ruelle-Sullivan} we review and extend
Ruelle-Sullivan theory.

Therefore, in order to  represent every real homology class
we must first enlarge the class of Ruelle-Sullivan currents. This is
done by considering immersions of abstract oriented solenoids. We
define a $k$-solenoid to be a Hausdorff compact space foliated by
$k$-dimensional leaves with finite dimensional transversal structure
(see the precise definition in section \ref{sec:minimal}).

For these oriented solenoids we can consider $k$-forms that we can
integrate provided that we are given a transversal measure invariant
by the holonomy group. We define an {\it immersion} of a solenoid
$S$ into $M$ to be a regular map $f: S\to M$ that is an immersion in
each leaf. If the solenoid $S$ is endowed with a transversal measure
$\mu$, then any smooth $k$-form in $M$ can be pulled back to $S$ by
$f$ and integrated. The resulting numerical value only depends on
the cohomology class of the $k$-form. Therefore we have defined a
closed current that we denote by $(f,S_\mu )$ and call a generalized
current. This defines a homology class $[f,S_\mu] \in H_k(M,\RR )$.
Using these generalized currents, the above mentioned obstruction
disappears. Actually in \cite{MPM4}, we shall prove that every real
homology class in $H_k(M,\RR )$ can be realized by a generalized
current $(f,S_\mu)$ where $S_\mu$ is an oriented measured immersed
solenoid. Moreover in \cite{MPM5}, it is shown that the set of such
generalized currents $(f,S_\mu)$ realizing a given real homology
class $a\in H_k(M,\RR )$ is dense in the space of closed currents
representing $a$.

But the space of solenoids is large, and we would like to realize
the real homology classes by a minimal class of solenoids enjoying
good properties. We are first naturally led to topological
minimality. As we prove in section \ref{sec:minimal}, the spaces of
$k$-solenoids is inductive and therefore there are always minimal
$k$-solenoids. However, the transversal structure and the holonomy
group of minimal solenoids can have a rich structure, studied in
sections \ref{sec:topological} and \ref{sec:holonomy}. In particular,
such a solenoid may have many different transversal measures, each
one yielding a different generalized current for the same immersion
$f$. Therefore, of particular interest are uniquely ergodic solenoids,
with only one ergodic transversal measure. We study them in
section \ref{sec:transversal-structure}.

We also make a thorough study of Riemannian solenoids. We identify
transversal measures with the class of measures that disintegrate as
volume along leaves (daval measures), and also
prove a canonical decomposition of measures into a daval measure and
a singular part, corresponding to the classical Lebesgue
decomposition on manifolds (see section \ref{sec:riemannian}).


\noindent \textbf{Acknowledgements.} \
The authors are grateful to Alberto Candel, Etienne Ghys, Nessim Sibony,
Dennis Sullivan and Jaume Amor\'os for their comments and interest on
this work. In particular, Etienne
Ghys early pointed out on the impossibility of realization in
general of integer homology classes by embedded manifolds.
We thank the referee for a extremely careful reading of the manuscript
and many suggestions.

\section{Minimal solenoids} \label{sec:minimal}

We first define abstract solenoids, which are the main tool in this
article. As usual, $C^\omega$ denotes the space of analytic
functions. By $r\leq \omega$, we mean that $r$ is an integer, that $r=\infty$
or that $r=\omega$.

\begin{definition}\label{def:W}
  Let $0\leq s,r \leq \omega$, $r\geq s$, and let $k,\ell\geq 0$ be two integers.
  A {\rm foliated manifold} (of dimension $k+\ell$, with $k$-dimensional
  leaves, of regularity $C^{r,s}$) is a smooth
  manifold $W$ of dimension $k+\ell$ endowed with an atlas
  $\cA=\{ (U_i,\varphi_i)\}$, $\varphi_i:U_i\to \RR^k\x \RR^\ell$,
  whose transition maps
 $$
 \varphi_{ij}=\varphi_i\circ \varphi_j^{-1}: \varphi_j(U_i\cap U_j)
 \to \varphi_i(U_i\cap U_j) \ \, ,
 $$
are of the form $\varphi_{ij}(x,y)=(X_{ij}(x,y), Y_{ij}(y))$,
where $Y_{ij}(y)$ is of class $C^s$ and $X_{ij}(x,y)$ is of class
$C^{r,s}$.

A {\rm flow-box for} $W$ is a pair $(U,\varphi)$ consisting of an
open subset $U\subset W$ and a map $\varphi:U\to \RR^k\x \RR^\ell$
such that $\cA \cup \{(U,\varphi)\}$ is still an atlas for $W$.
\end{definition}

Clearly an open subset of a foliated manifold is also a foliated
manifold.

Given two foliated manifolds $W_1$, $W_2$ of dimension $k+\ell$,
with $k$-dimensional leaves, and of regularity $C^{r,s}$, a regular
map $f:W_1\to W_2$ is a continuous map which is locally, in
flow-boxes, of the form $f(x,y)=(X(x,y),Y(y))$, where $Y$ is of
class $C^{s}$ and $X$ is of class $C^{r,s}$.

A diffeomorphism $\phi:W_1\to W_2$ is a homeomorphism such that
$\phi$ and $\phi^{-1}$ are both regular maps.

\begin{definition}\label{def:k-solenoid}\textbf{\em ($k$-solenoid)}
Let  $0\leq r \leq s\leq \omega$, and let $k,\ell\geq 0$ be two
integers. A {\rm pre-solenoid} of dimension $k$, of class $C^{r,s}$
and transversal dimension $\ell$ is a pair $(S,W)$ where $W$ is a
foliated manifold and $S\subset W$ is a compact subspace which is a
union of leaves. 

Two pre-solenoids $(S,W_1)$ and $(S,W_2)$ are {\rm equivalent} if there
are open subsets $U_1\subset W_1$, $U_2\subset W_2$ with $S\subset
U_1$ and $S\subset U_2$, and a diffeomorphism $f: U_1\to U_2$ such
that $f$ is the identity on $S$.

A {\rm $k$-solenoid} of class $C^{r,s}$ and transversal dimension $\ell$
(or just a $k$-solenoid, or a solenoid) is an equivalence class of
pre-solenoids.
\end{definition}

We usually denote a solenoid by $S$, without making explicit
mention of $W$. We shall say that $W$ defines the solenoid
structure of $S$.

\begin{definition}\label{def:flow-box}
\textbf{\em (Flow-box)} Let $S$ be a solenoid. A {\rm flow-box} for
$S$ is a pair $(U,\varphi)$ formed by an open subset $U\subset S$
and a homeomorphism
 $$
 \varphi : U\to D^k\times K(U) \, ,
 $$
where $D^k$ is the $k$-dimensional open ball and $K(U)\subset
\RR^{\ell}$, such that there exists a foliated manifold $W$ defining
the solenoid structure of $S$, $S\subset W$, and a flow-box
$\hat\varphi:\hat U\to \RR^k\x\RR^\ell$ for $W$, with $U=\hat U\cap
S$, $\hat\varphi(U)= D^k\x K(U)\subset \RR^k\x\RR^\ell$ and
$\varphi=\hat\varphi_{|U}$.

The set $K(U)$ is the transversal space of the flow-box. The
dimension $\ell$ is the transversal dimension.
\end{definition}

\begin{figure}[h] 
\centering
\resizebox{8cm}{!}{\includegraphics{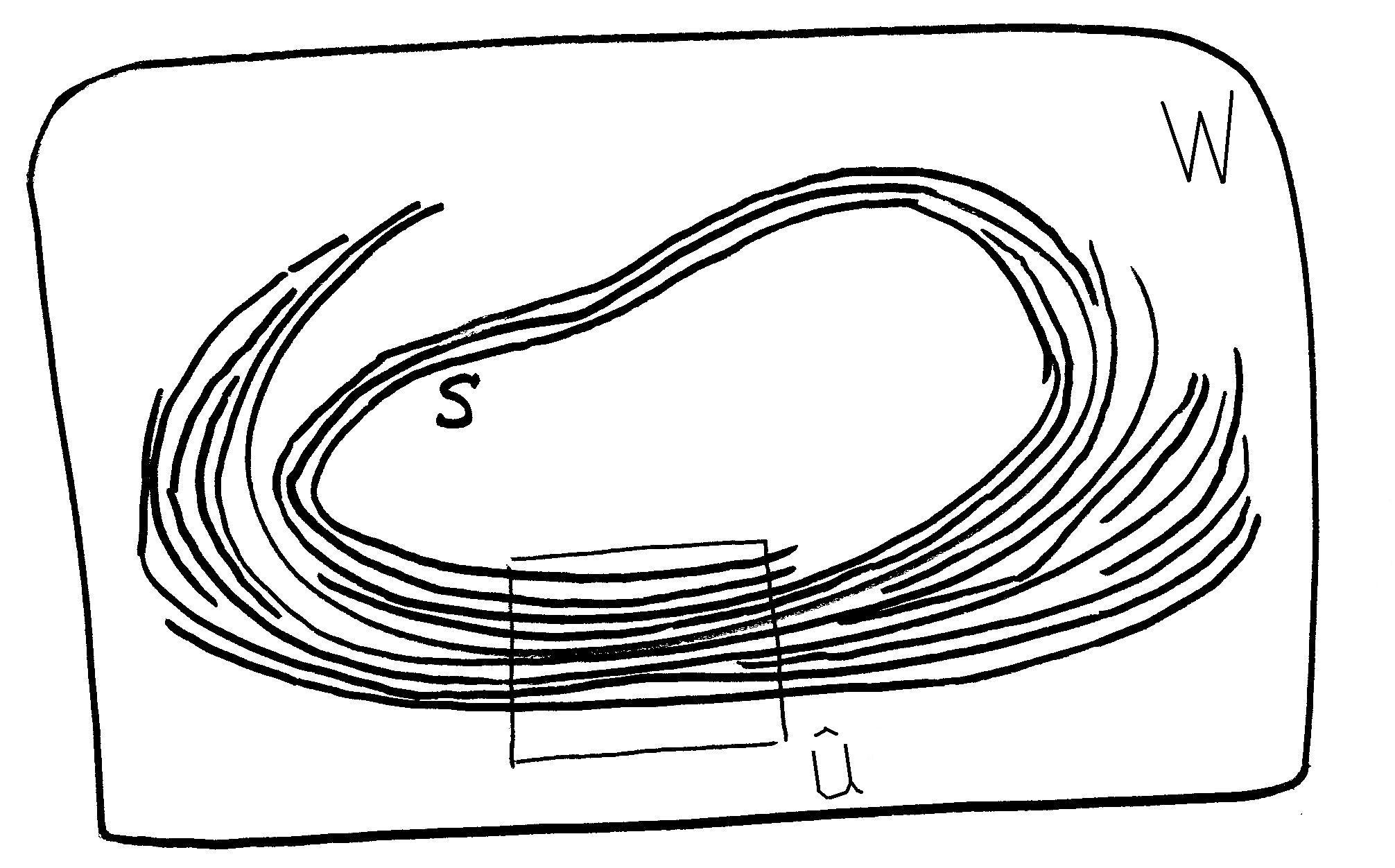}}    
\caption{Flow-box}
\end{figure}

As $S$ is locally compact, any point of $S$ is contained in a
flow-box $U$ whose closure $\overline U$ is contained in a bigger
flow-box. For such flow-box, $\overline U \cong \overline{D}^k
\times \overline K(U)$, where $\overline D^k$ is the closed unit
ball, $\overline K(U)$ is some compact subspace of $\RR^{\ell}$, and
$U=D^k \times  K(U) \subset \overline{D}^k \times \overline K(U)$.
We might call these flow-boxes \emph{good}. All flow-boxes that we
shall use are of this type so we shall not append any appelative to
them.

When the transversals of flow-boxes $K(U)\subset \RR^{\ell}$ are
open sets of $\RR^{\ell}$ we talk about full transversals. In this
case the solenoid structure is a $(k+\ell )$-dimensional compact
manifold foliated by $k$-dimensional leaves.

\begin{remark}
We refer to $k$ as the dimension of the solenoid and we write
 $$
 k=\dim S \, .
 $$
Note that, contrary to manifolds, this dimension in general does not
coincide with the topological dimension of $S$. The local structure
and compactness imply that solenoids are metrizable topological
spaces. The Hausdorff dimension of the transversals $K(U)$ is well
defined and obviously bounded by the transversal dimension $\ell$.
Thus, considering a finite covering by flow-boxes, we see that the
Hausdorff dimension of $S$, $\dim_H S$, is well defined, and equal
to
 $$
 \dim_H S=k+\max_U \dim_H K(U) \leq k+\ell <+\infty \, .
 $$
\end{remark}

\medskip

\begin{remark}
The definition of solenoid admits various generalizations. We
could focus on intrinsic changes of charts in $S$ with some
transverse Whitney regularity but without requiring a local
diffeomorphism extension. Such a definition would be more general,
but it is not necessary for our purposes. The present definition
balances generality and simplicity.

Another alternative generalization would be to avoid any
restrictive transversal assumption beyond continuity, and allow
for transversals of flow-boxes any topological space $K(U)$. But a
fruitful point of view is to regard the theory of solenoids as a
generalization of the classical theory of manifolds. Therefore it
is natural to restrict the definition only allowing finite
dimensional transversal spaces. For an alternative approach see
\cite{MoSch}.
\end{remark}

\medskip

\begin{definition}\label{def:diffeomorphisms}
\textbf{\em (Diffeomorphisms of solenoids)} Let $S_1$ and $S_2$ be
two  $k$-solenoids of class $C^{r,s}$ with the same transversal
dimension. A $C^{r,s}$-{\rm diffeomorphism} $f:S_1\to S_2$ is the
restriction of a $C^{r,s}$-diffeomorphism $\hat{f}:W_1\to W_2$ of
two foliated manifolds defining the solenoid structures of $S_1$ and
$S_2$, respectively.

\end{definition}

\begin{remark}
A homeomorphism of solenoids is a diffeomorphism of class $C^{0,0}$.
\end{remark}

This defines the category of smooth solenoids of a given regularity.
Note that we have the subcategory of smooth solenoids with full
transversals, and we have a forgetting functor into the category of
smooth manifolds.

\begin{definition} \label{def:leaf}\textbf{\em (Leaf)}
A {\rm leaf} of a $k$-solenoid $S$ is a leaf $l$ of any foliated
manifold $W$ inducing the solenoid structure of $S$, such that
$l\subset S$. Note that this notion is independent of $W$.
%
\end{definition}

Note that $S\subset W$ is the union of a collection of leaves.
Therefore, for a leaf $l$ of $W$ either $l\subset S$ or $l\cap
S=\emptyset$.

\medskip

Observe that when the transversals of flow-boxes $K(U)$ are totally
disconnected then the leaf-equivalence coincides with path
connection equivalence, and the leaves are the path connected
components of $S$.

\begin{definition} \label{def:oriented-solenoid}\textbf{\em (Oriented solenoid)}
An {\rm oriented solenoid} is a solenoid $S$ such that there is a
foliated manifold $W\supset S$ inducing the solenoid structure of
$S$, where $W$ has oriented leaves.
\end{definition}

It is easy to see that $S$ is oriented if and only if there is an
orientation for the leaves of $S$ such that there is a covering by
flow-boxes which preserve the orientation of the leaves.

Notice that we do not require $W$ to be oriented. For example, we
can foliate a M\"obius strip and create an oriented solenoid.

\begin{definition}\label{def:space-solenoids}
We define $\cS^{r,s}_{k,\ell}$ to be the space of $C^{r,s}$
$k$-solenoids with transversal dimension $\ell$.
\end{definition}

\begin{proposition}\label{prop:sub-solenoids}
Let $S_0\in \cS_{k,\ell}^{r,s}$ be a solenoid. A non-empty compact
subset $S$ of $S_0$ which is a union of leaves is a $k$-solenoid of
class $C^{r,s}$ and transversal dimension $\ell$.
\end{proposition}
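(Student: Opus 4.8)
The plan is to produce an explicit pre-solenoid representative for $S$ by reusing the foliated manifold that already carries the structure of $S_0$, so that essentially no new construction is needed. Concretely, I would fix a representative $(S_0,W)$ of the solenoid $S_0$, where $W$ is a foliated manifold of dimension $k+l$ with $k$-dimensional leaves and regularity $C^{r,s}$ (Definition~\ref{def:W}) and $S_0\subset W$ is a compact collection of leaves. The claim to be verified is that $(S,W)$ is itself a pre-solenoid of dimension $k$, class $C^{r,s}$ and transversal dimension $l$; its equivalence class is then the sought $k$-solenoid structure on $S$.

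First I would check that $S$ is a union of leaves of $W$. By Definition~\ref{def:leaf} a leaf of $S_0$ is exactly a leaf $\ell$ of $W$ with $\ell\subset S_0$, and this notion is independent of the choice of $W$. Since by hypothesis $S$ is a union of leaves of $S_0$, and each of these is a leaf of $W$ lying in $S_0$, the set $S$ is a union of leaves of $W$. Combined with the hypothesis that $S$ is non-empty and compact, this is precisely the content of Definition~\ref{def:k-solenoid}: the pair $(S,W)$ is a pre-solenoid, inheriting the transversal dimension $l$ and the regularity $C^{r,s}$ directly from $W$.

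The only point that requires attention is that the flow-box structure of $S_0$ restricts to a genuine flow-box structure on $S$ in the sense of Definition~\ref{def:flow-box}, with transversals inside $\RR^l$. Given a good flow-box $\hat\varphi:\hat U\to\RR^k\x\RR^l$ of $W$, I would take $U=\hat U\cap S$ and $\varphi=\hat\varphi_{|U}$. Because $S$ is saturated, being a union of entire leaves, and each plaque $\hat\varphi^{-1}(D^k\x\{y\})$ lies in a single leaf, the intersection meets every plaque either in the whole plaque or not at all. Hence $\varphi(U)=D^k\x K(U)$ with $K(U)=\{\,y\in\RR^l : \hat\varphi^{-1}(D^k\x\{y\})\subset S\,\}$, and since $S$ is compact the closure $\overline{K(U)}$ is a compact subset of $\RR^l$. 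Thus the restricted flow-boxes are again good, and they cover $S$ because the flow-boxes of $W$ cover $S_0\supset S$.

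The main obstacle is therefore only bookkeeping rather than a genuine difficulty: everything reduces to the observation that intersecting the ambient flow-boxes of $W$ with a saturated set preserves the local product structure. No new foliated manifold must be built, since the same $W$ works verbatim, and the single global hypothesis actually used is the compactness of $S$, which guarantees both that $(S,W)$ is a pre-solenoid and that its transversals are compact.
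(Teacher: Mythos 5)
Your proof is correct and follows essentially the same route as the paper's: reuse the same foliated manifold $W$ that defines the solenoid structure of $S_0$, observe that $(S,W)$ is then a pre-solenoid, and note that the flow-boxes of $S_0$ restrict to flow-boxes for $S$. The paper states this in two sentences; you have merely spelled out the saturation and compactness bookkeeping that the paper leaves implicit.
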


\begin{proof}
Let $W$ be a $C^{r,s}$-foliated manifold defining the solenoid
structure of $S_0$. Then $S\subset W$ and $W$ defines a
$C^{r,s}$-solenoid structure for $S$.

Note that the flow-boxes of $S_0$ give, by restriction to $S$,
flow-boxes for $S$.
\end{proof}

\begin{corollary} \label{cor:connected-component}
Connected components of solenoids $\cS_{k,\ell}^{r,s}$ are in
$\cS_{k,\ell}^{r,s}$.
\end{corollary}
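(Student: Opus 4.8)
The plan is to reduce this to Proposition \ref{prop:sub-solenoids}. If I can show that any connected component $C$ of a solenoid $S_0\in\cS_{k,l}^{r,s}$ is a non-empty compact subset of $S_0$ that is a union of leaves, then that proposition immediately gives that $C$ is a $k$-solenoid of class $C^{r,s}$ with transversal dimension $l$, i.e. $C\in\cS_{k,l}^{r,s}$. So the entire task is to check the three hypotheses of the proposition for $C$.

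Non-emptiness is immediate, since a connected component contains at least the point that generates it. For compactness, I would invoke the standard fact that connected components of any topological space are closed; as $S_0$ is compact, the closed subset $C$ is then compact as well. Note that this argument needs only that components are closed, so no local connectedness of $S_0$ is required.

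The substantive point is that $C$ is a union of leaves. Here I would use that each leaf $l$ of $S_0$ is connected, being a leaf of a foliated manifold $W$ inducing the solenoid structure (Definition \ref{def:leaf}), and that $S_0$ is the union of its leaves, as recorded after that definition. Since $l$ is connected and contained in $S_0$, it lies entirely inside a single connected component; consequently, if $C\cap l\neq\emptyset$ then $l\subset C$. Writing $C$ as the union of those leaves it meets then exhibits $C$ as a union of leaves, and Proposition \ref{prop:sub-solenoids} applies.

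I expect the only step needing any care to be this last one, namely the passage from ``$C$ meets $l$'' to ``$l\subset C$''. This rests entirely on the connectedness of leaves together with the fact that $S_0$ is covered by its leaves; both are built into the definitions already established, so once they are cited the corollary follows with no further computation.
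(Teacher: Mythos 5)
Your proof is correct and is exactly the argument the paper intends: the corollary is stated without proof as an immediate consequence of Proposition \ref{prop:sub-solenoids}, and your verification of its three hypotheses (non-emptiness, compactness via components being closed in the compact space $S_0$, and the union-of-leaves property via connectedness of leaves) is precisely the implicit reasoning. Nothing is missing; the step you flag as needing care is handled correctly.
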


\begin{theorem}\label{thm:inductive}
The space $(\cS_{k,\ell}^{r,s}, \subset )$ ordered by inclusion is
an inductive set.
\end{theorem}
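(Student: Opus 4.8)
The plan is to prove that the ordered set $(\cS_{k,l}^{r,s},\subset)$ is inductive by showing that every totally ordered subfamily (chain) admits a lower bound still lying in $\cS_{k,l}^{r,s}$; the existence of minimal solenoids then follows from Zorn's lemma applied to the reverse order. Given a chain $\{S_\alpha\}_{\alpha\in A}$, the natural candidate for a lower bound is the intersection $S=\bigcap_{\alpha}S_\alpha$, and the entire argument reduces to checking that this intersection is itself a $k$-solenoid of class $C^{r,s}$ and transversal dimension $l$, for which I would invoke proposition \ref{prop:sub-solenoids}.

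First I would fix one reference element $S_{\alpha_0}$ of the chain, together with an ambient foliated manifold $W_0$ defining its solenoid structure. Since the subfamily $\{S_\alpha : S_\alpha\subseteq S_{\alpha_0}\}$ is cofinal downward in the chain while every other member contains $S_{\alpha_0}$, one has $S=\bigcap_{S_\alpha\subseteq S_{\alpha_0}}S_\alpha$, so the whole discussion may be carried out inside the single compact solenoid $S_{\alpha_0}\subset W_0$. By proposition \ref{prop:sub-solenoids} each such $S_\alpha$ is a union of leaves of $W_0$ and a closed subset of $S_{\alpha_0}$.

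Next I would verify the three hypotheses of proposition \ref{prop:sub-solenoids} for $S$. Compactness is immediate, as $S$ is an intersection of closed subsets of the compact Hausdorff space $S_{\alpha_0}$, hence closed and compact. That $S$ is a union of leaves follows from the dichotomy recorded after definition \ref{def:leaf}: for a leaf $l$ of $W_0$ and each index $\alpha$, either $l\subset S_\alpha$ or $l\cap S_\alpha=\emptyset$; consequently any point $p\in S$ forces the leaf through $p$ to lie in every $S_\alpha$, and therefore in $S$.

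The one point requiring genuine work, and the main obstacle, is non-emptiness; this is exactly where the compactness built into the definition of a solenoid becomes indispensable. I would argue that the family $\{S_\alpha : S_\alpha\subseteq S_{\alpha_0}\}$ consists of non-empty closed subsets of the compact space $S_{\alpha_0}$ enjoying the finite intersection property: any finite subcollection, being totally ordered by inclusion, has a smallest member, which is non-empty and contained in all the others. Compactness of $S_{\alpha_0}$ then guarantees $S\neq\emptyset$. With all three hypotheses verified, proposition \ref{prop:sub-solenoids} yields $S\in\cS_{k,l}^{r,s}$, and since $S\subset S_\alpha$ for every $\alpha$ it is the sought lower bound, completing the proof that the set is inductive.
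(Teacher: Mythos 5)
Your proof is correct, and its core is the same as the paper's: form the intersection of the chain and verify the hypotheses of proposition \ref{prop:sub-solenoids} (non-empty, compact, union of leaves). The difference is one of generality. The paper only treats a countable nested sequence $S_{n+1}\subset S_n$, for which non-emptiness of $\bigcap_n S_n$ is immediate from compactness of nested sets, whereas you handle an arbitrary totally ordered family, reducing to the members below a fixed reference element $S_{\alpha_0}$ and invoking the finite intersection property inside the compact space $S_{\alpha_0}$. Since the point of being inductive is to apply Zorn's lemma (corollary \ref{cor:minimal-elements}), which requires a lower bound for \emph{every} chain and not merely every sequence, your version is strictly more complete as written; the paper's sequential argument suffices only after an additional, unstated reduction --- for instance, solenoids are compact metrizable, hence hereditarily Lindel\"of, so any intersection of closed subsets equals the intersection of a countable subfamily, and a countable subchain can then be rearranged into a nested sequence. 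Your cofinality remark and your justification that the intersection is a union of leaves (each leaf of $W_0$ is either contained in or disjoint from each $S_\alpha$, by the dichotomy noted after definition \ref{def:leaf}) are also correct and make explicit details that the paper's one-line proof leaves implicit.
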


\begin{proof}
Let $(S_n) \subset \cS_{k,\ell}^{r,s}$ be a nested sequence of
solenoids, $S_{n+1} \subset S_n$. Define
 $$
 S_\infty = \bigcap_n S_n \, .
 $$
Then $S_\infty$ is a non-empty compact subset of $S_1$  as
intersection of a nested sequence of such sets. It is also a union
of leaves since each $S_n$ is so. Therefore by proposition
\ref{prop:sub-solenoids}, it is an element of $\cS_{k,\ell}^{r,s}$.
\end{proof}

\begin{corollary}\label{cor:minimal-elements}
The space $\cS_{k,\ell}^{r,s}$ has minimal elements.
\end{corollary}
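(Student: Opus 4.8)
The plan is to obtain minimal elements by applying Zorn's Lemma to the partially ordered set $(\cS_{k,l}^{r,s}, \subset)$ with respect to the reverse order: a minimal element for $\subset$ is exactly a maximal element for $\supset$, so it suffices to verify that every chain in $(\cS_{k,l}^{r,s}, \supset)$ admits an upper bound, equivalently that every totally ordered subfamily for inclusion has a lower bound inside $\cS_{k,l}^{r,s}$.

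First I would take an arbitrary chain $\{S_\alpha\}_{\alpha\in A}$ in $\cS_{k,l}^{r,s}$, totally ordered by inclusion, and set $S_\infty=\bigcap_{\alpha\in A} S_\alpha$. The candidate lower bound is $S_\infty$, and the content of the argument is that $S_\infty\in\cS_{k,l}^{r,s}$. Fixing any member $S_{\alpha_0}$ of the chain, all smaller $S_\alpha$ are compact subsets of the compact space $S_{\alpha_0}$, so we may work inside $S_{\alpha_0}$. Compactness of $S_\infty$ and its being a union of leaves are immediate (it is closed in $S_{\alpha_0}$, and an intersection of unions of leaves is a union of leaves), so the only genuine point to check is that $S_\infty$ is nonempty.

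For nonemptiness I would invoke the finite intersection property. Given finitely many members $S_{\alpha_1},\dots,S_{\alpha_m}$ of the chain, total ordering by inclusion forces their intersection to equal the smallest of them, which is nonempty. Since the $S_\alpha$ are closed in the compact space $S_{\alpha_0}$ and every finite subfamily has nonempty intersection, compactness yields $S_\infty\neq\emptyset$. Then Proposition \ref{prop:sub-solenoids} applies and gives $S_\infty\in\cS_{k,l}^{r,s}$; by construction $S_\infty\subset S_\alpha$ for every $\alpha$, so it is a lower bound for the chain. With every chain bounded below, Zorn's Lemma produces a minimal element of $(\cS_{k,l}^{r,s},\subset)$.

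The main obstacle is bookkeeping rather than a deep difficulty: Theorem \ref{thm:inductive} is phrased only for countable nested sequences, where nonemptiness of the intersection follows from the elementary nested-compact-sets argument, whereas Zorn's Lemma requires the lower-bound property for arbitrary chains. Upgrading to arbitrary chains is precisely where the finite intersection property replaces the nested-sequence reasoning; once this observation is in place, everything else reduces to the facts already recorded in Proposition \ref{prop:sub-solenoids}.
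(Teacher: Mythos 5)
Your proof is correct, and its skeleton is the same as the paper's: intersect the chain, check the intersection is a non-empty compact union of leaves, apply Proposition \ref{prop:sub-solenoids}, and invoke Zorn's Lemma on the reverse order. Where you genuinely depart from (and improve on) the paper is in the chain condition: Theorem \ref{thm:inductive} verifies the lower-bound property only for countable nested sequences $S_{n+1}\subset S_n$, whereas Zorn's Lemma demands it for arbitrary totally ordered families, and your finite-intersection-property argument (any finite subfamily of a chain intersects in its smallest, hence non-empty, member; closedness of the $S_\alpha$ in the compact $S_{\alpha_0}$ then gives $S_\infty\neq\emptyset$) fills exactly this gap. An alternative repair that stays closer to the paper's statement is to use the remark in section \ref{sec:minimal} that solenoids are metrizable, hence hereditarily Lindel\"of: any family of closed sets then admits a countable subfamily with the same intersection, and a countable subchain can be rearranged as a nested sequence, so the paper's sequential theorem already suffices after that reduction. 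Your route is more direct and self-contained; the Lindel\"of route preserves Theorem \ref{thm:inductive} verbatim as the key lemma. One pedantic point neither you nor the paper addresses: Zorn's Lemma needs the poset to be non-empty, which holds trivially since, e.g., a compact $k$-manifold sitting as a single leaf $M\times\{0\}$ in $M\times\RR^l$ lies in $\cS_{k,l}^{r,s}$; this is harmless but worth a half-sentence if you want the argument airtight.
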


\begin{proposition}\label{prop:dense}
If $S\in \cS_{k,\ell}^{r,s} $ is minimal then $S$ is connected. $S$
is minimal if and only if all leafs of $S$ are dense.
\end{proposition}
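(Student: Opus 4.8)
The plan is to prove the three assertions in turn, the central tool throughout being that the closure of a union of leaves is again a union of leaves, so that Proposition \ref{prop:sub-solenoids} manufactures sub-solenoids from such closures. I begin with connectedness of a minimal $S$. If $S$ were disconnected, write $S = A \sqcup B$ with $A, B$ non-empty, disjoint, and both open, hence both closed and, by compactness of $S$, both compact. Since every leaf is connected and $\{A, B\}$ partitions $S$ into clopen sets, each leaf lies entirely in $A$ or entirely in $B$; thus $A$ and $B$ are unions of leaves. By Proposition \ref{prop:sub-solenoids}, $A$ is a sub-solenoid, and it is a proper non-empty subset because $B \neq \emptyset$, contradicting minimality. (One may equally invoke Corollary \ref{cor:connected-component}.) Hence $S$ is connected.

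For the forward implication of the equivalence, suppose $S$ is minimal and let $l$ be any leaf. The key step is to show that the closure $\overline{l}$ in $S$ is saturated, i.e.\ a union of leaves. I would verify this in a flow-box $\varphi : U \to D^k \times K(U)$: if $p \in \overline{l} \cap U$ lies in the plaque $D^k \times \{t_0\}$, choose $p_n \in l$ with $p_n \to p$ and write $p_n = (x_n, t_n)$, so that $t_n \to t_0$; for any point $q = (x', t_0)$ of the plaque through $p$, the points $(x', t_n)$ lie in $l$, because they belong to the same plaque as $p_n$, and converge to $q$, whence $q \in \overline{l}$. Thus the entire plaque through $p$ lies in $\overline{l}$, and propagating this statement along the leaf through $p$ by a chain of overlapping flow-boxes shows that leaf is contained in $\overline{l}$. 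Therefore $\overline{l}$ is a non-empty compact union of leaves, hence a sub-solenoid by Proposition \ref{prop:sub-solenoids}, and minimality forces $\overline{l} = S$; so every leaf is dense.

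Conversely, if every leaf is dense, let $S' \subseteq S$ be any non-empty compact sub-solenoid. It contains some leaf $l$, and since $S'$ is closed while $l$ is dense in $S$, we get $S = \overline{l} \subseteq S' \subseteq S$, so $S' = S$. Hence $S$ admits no proper sub-solenoid and is minimal. I expect the main obstacle to be the saturation step: one must check that the local product structure of flow-boxes survives passage to the limit, so that the closure of a leaf never picks up a partial plaque, and then that this purely local fact globalises consistently along each leaf.
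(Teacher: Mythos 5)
Your proof is correct and follows essentially the same route as the paper: connectedness via the fact that connected components are sub-solenoids (Corollary \ref{cor:connected-component}), density of leaves by observing that $\overline{l}$ is a non-empty compact union of leaves, hence a sub-solenoid by Proposition \ref{prop:sub-solenoids}, forced to equal $S$ by minimality, and the converse by the contrapositive. The only difference is that you spell out the plaque-chaining verification that $\overline{l}$ is saturated, a step the paper states without proof; your verification of it is sound.
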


\begin{proof}
Each connected component of $S$ is a solenoid, thus by minimality
$S$ must be connected.

Also the closure ${\overline L}$ of any leaf $L\subset S$ is a
non-empty compact set union of leaves. Thus it is a solenoid and by
minimality we must have ${\overline L} =S$.

Conversely, if $S$ is not minimal, then there is a proper sub-solenoid
$S_0\subset S$. Take any leaf $l\subset S_0$. Then $l$ is not dense in $S$.
\end{proof}

\section{Topological transversal structure of solenoids}\label{sec:topological}

\begin{definition}\label{def:transversal}\textbf{\em (Transversal)}
Let $S$ be a $k$-solenoid. A {\rm local transversal} at a point
$p\in S$ is a subset $T$ of $S$ with $p\in T$, such that there is a
flow-box $(U, \varphi)$ of $S$ with $U$ a neighborhood of $p$
containing $T$ and such that
 $$
 \varphi (T)=\{0\}\x K(U) \, .
 $$

A {\rm transversal} $T$ of $S$ is a compact subset of $S$ such that for
each $p\in T$ there is an open neighborhood $V$ of $p$ such that
$V\cap T$ is a local transversal at $p$.
\end{definition}

If $S$ is a $k$-solenoid of class $C^{r,s}$, then any transversal
$T$ inherits an  $\ell$-dimensional $C^s$-Whitney structure.

We clearly have:

\begin{proposition}\label{prop:two-transversals}
The union of two disjoint transversals is a transversal.
\end{proposition}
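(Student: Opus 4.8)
The plan is to verify the two defining properties of a transversal for the set $T=T_1\cup T_2$ directly from the hypothesis that each $T_i$ is a transversal. Compactness of $T$ is immediate, being a finite union of compact sets, so the only real content lies in checking the local condition: for every $p\in T$ one must produce an open neighborhood $V$ of $p$ such that $V\cap T$ is a local transversal at $p$. The observation that makes this work is that the disjointness of $T_1$ and $T_2$, combined with the fact that a compact subset of the Hausdorff (indeed metrizable) space $S$ is closed, allows us to separate a point of one transversal from the whole of the other.

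Concretely, I would fix $p\in T$. By disjointness $p$ lies in exactly one of the two sets; assume without loss of generality $p\in T_1$. Since $T_1$ is a transversal, there is an open neighborhood $V_1$ of $p$ with $V_1\cap T_1$ a local transversal at $p$, that is, a flow-box $(U,\varphi)$ with $V_1\cap T_1\subset U$ and $\varphi(V_1\cap T_1)=\{0\}\x K(U)$. Because $T_2$ is compact it is closed in $S$, so $S\setminus T_2$ is an open set containing $p$. I would then take $V=V_1\cap(S\setminus T_2)$, an open neighborhood of $p$. Since $T_1\cap T_2=\emptyset$ we have $T_1\subset S\setminus T_2$, whence $V\cap T_1=V_1\cap T_1$, while $V\cap T_2=\emptyset$ by construction. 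Therefore $V\cap T=V\cap T_1=V_1\cap T_1$, which is already a local transversal at $p$.

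The case $p\in T_2$ is identical after interchanging the roles of the two transversals, and this exhausts all points of $T$; hence $T$ satisfies the local transversal condition everywhere and is a transversal. I do not expect a genuine obstacle here, as the argument is purely point-set topological. The single step requiring care is the separation, where one must invoke that $T_2$ is closed—a consequence of its compactness in a Hausdorff space—in order to shrink $V_1$ to a neighborhood meeting $T$ only inside $T_1$.
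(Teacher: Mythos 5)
Your proof is correct: the paper actually offers no argument at all (it introduces the proposition with ``We clearly have:''), and your verification---compactness of the finite union plus using that the disjoint compact set $T_2$ is closed in the Hausdorff space $S$ to shrink the neighborhood so that $V\cap T=V_1\cap T_1$---is exactly the routine check the authors deem obvious.
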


\begin{definition}\label{def:global-transversal}
A transversal $T$ of $S$ is a {\rm global transversal} if all leaves
intersect $T$.
\end{definition}

The next proposition is clear.

\begin{proposition}\label{prop:global-transversals}
The union of two disjoint transversals, one of them global, is a
global transversal.
\end{proposition}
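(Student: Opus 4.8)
The plan is to deduce this immediately from the preceding Proposition~\ref{prop:two-transversals} together with the defining property of globality. Write the two disjoint transversals as $T_1$ and $T_2$, and assume without loss of generality that $T_1$ is the global one. The assertion then splits into two parts: that $T_1\cup T_2$ is a transversal, and that it meets every leaf.

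For the first part, I would simply invoke Proposition~\ref{prop:two-transversals}: since $T_1$ and $T_2$ are disjoint transversals, their union $T_1\cup T_2$ is again a transversal of $S$. No new work is required here, as the transversal property has already been established for disjoint unions.

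For the second part, the argument is a one-line containment. By Definition~\ref{def:global-transversal}, every leaf $L$ of $S$ intersects $T_1$, that is, $L\cap T_1\neq\emptyset$. Since $T_1\subset T_1\cup T_2$, it follows that $L\cap(T_1\cup T_2)\supseteq L\cap T_1\neq\emptyset$, so every leaf meets $T_1\cup T_2$ as well. Combining the two parts, $T_1\cup T_2$ is a transversal that all leaves intersect, hence a global transversal by definition.

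I do not anticipate any genuine obstacle: the statement is essentially formal once Proposition~\ref{prop:two-transversals} is in hand, since globality is a monotone property under enlarging the transversal. The only point worth stating explicitly is the disjointness hypothesis, which is exactly what is needed to apply Proposition~\ref{prop:two-transversals}; the globality then propagates trivially to the larger set.
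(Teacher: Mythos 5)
Your proof is correct and is exactly the argument the paper has in mind; indeed the paper gives no proof at all, stating only that the proposition is clear, and your two steps (apply Proposition~\ref{prop:two-transversals} for the transversal property, then use monotonicity of the leaf-intersection condition under enlargement) are the canonical way to make that clarity explicit.
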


\begin{proposition}\label{prop:global-transversal}
If $S$ is minimal then all transversals are global. Moreover, if $S$
is minimal then any local transversal intersects all leaves of $S$.
\end{proposition}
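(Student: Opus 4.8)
The plan is to prove the second (``moreover'') assertion first, because the first then follows at once. Granting that every local transversal of a minimal solenoid meets every leaf, let $T$ be any nonempty transversal and $L$ any leaf: choosing $p\in T$, the definition of transversal supplies an open neighbourhood $V$ of $p$ for which $V\cap T$ is a local transversal at $p$, and this local transversal meets $L$ by the moreover statement, so $L\cap T\supset L\cap(V\cap T)\neq\emptyset$. Since $L$ is arbitrary, $T$ is global.

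So I would concentrate on showing that a local transversal $T$ meets an arbitrary leaf $L$. By definition $T$ sits in a flow-box $(U,\varphi)$, $\varphi:U\to D^k\times K(U)$, with $\varphi(T)=\{0\}\times K(U)$. Because $S$ is minimal, Proposition \ref{prop:dense} tells us that $L$ is dense in $S$; as $U$ is a nonempty open set this forces $L\cap U\neq\emptyset$. Now $L\cap U$ is a union of plaques $\varphi^{-1}(D^k\times\{y\})$: each such plaque is connected and contained in a single leaf, and distinct leaves are disjoint, so every plaque either lies in $L$ or misses it. Picking a plaque $\varphi^{-1}(D^k\times\{y_0\})\subset L$, the point $q=\varphi^{-1}(0,y_0)$ belongs to it (hence to $L$) and satisfies $\varphi(q)=(0,y_0)\in\{0\}\times K(U)=\varphi(T)$, so $q\in L\cap T$.

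I expect the only point requiring care to be the passage from ``$L$ is dense, hence meets the open set $U$'' to ``$L$ captures a whole plaque, hence meets the central slice $\{0\}\times K(U)$'': it is precisely the flow-box structure --- plaques being the connected components of $L\cap U$, each crossing the slice in exactly one point --- that converts the purely topological density of the leaf into a genuine transverse intersection. Everything else is bookkeeping with the local product structure.
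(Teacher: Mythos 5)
Your proposal is correct and follows essentially the same route as the paper: reduce to the ``moreover'' statement, invoke Proposition \ref{prop:dense} to get that every leaf is dense and hence meets the flow-box $U$ containing the local transversal, and conclude that the leaf meets the central slice $\{0\}\times K(U)$. The paper compresses the plaque argument into the single phrase ``and therefore they intersect $T$''; your explicit justification via the product structure of the flow-box is exactly the detail being elided there.
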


\begin{proof}
It is enough to see the second statement, since it implies the
first. Let $U$ be a flow-box and $T=\varphi^{-1}(\{0\}\x K(U))$ a
local transversal (see definition \ref{def:flow-box}). By
proposition \ref{prop:dense}, all leaves intersect $U$ and therefore
they intersect $T$.
%
\end{proof}

Observe that the definition of solenoid with regular transverse
structure says that $S$ is always embedded in a
$(k+\ell)$-dimensional manifold $W$. Therefore $S\subset W$ has an
interior and a boundary relative to $W$. These sets do not depend on
the choice of $W$.

\begin{definition}\label{def:interior}\textbf{\em (Proper interior and boundary)}
Let $S$ be a $k$-solenoid. Let $W$ be a foliated manifold defining
the solenoid structure of $S$. The {\rm proper interior} of $S$ is
the interior of $S$ as a subset of $W$, considered as a
$(k+\ell)$-dimensional manifold (where $\ell$ is the transversal
dimension as usual).

The {\rm proper boundary} of $S$ is defined as the complement in $S$
of the proper interior.
\end{definition}

Let $\hat\varphi:\hat{U} \to \RR^k\x \RR^\ell$ be a flow-box for $W$
such that $U=\hat{U}\cap S$ and $\varphi=\hat\varphi_{|U}: U\to D^k
\x K(U)$ is a flow-box for $S$. Then $K(U)\subset \RR^\ell$. The
proper interior, resp.\ the proper boundary, of $S$, intersected
with $U$, consists of the collection of leaves $\varphi^{-1}(D^k \x
\{ y\})$, where $y\in K(U)$ is in the interior, resp.\ boundary, of
$K(U)\subset \RR^\ell$.

Note that the proper boundary of a solenoid that is a foliation of a
manifold is empty. We have the converse, as follows from proposition
\ref{prop:sub-solenoids}.

\begin{proposition}\label{prop:sub-solenoid}
If the proper boundary of $S$ is non-empty then it is a
sub-solenoid of $S$.
\end{proposition}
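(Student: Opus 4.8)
The goal is to show that the proper boundary $\partial S$, if non-empty, is itself a sub-solenoid of $S$. By Proposition \ref{prop:sub-solenoids}, it suffices to verify that $\partial S$ is a non-empty compact subset of $S$ that is a union of leaves. Non-emptiness is the hypothesis, so the two substantive points are compactness and leaf-saturation.

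Let me recall the local picture. In a flow-box $\hat\varphi:\hat U\to\RR^k\times\RR^l$ with $U=\hat U\cap S$ and $\varphi(U)=D^k\times K(U)$, the excerpt already records that the proper boundary intersected with $U$ consists of the leaves $\varphi^{-1}(D^k\times\{y\})$ with $y$ in the topological boundary $\partial_{\RR^l} K(U)$ of $K(U)\subset\RR^l$. This local description is the engine for both remaining points.

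For leaf-saturation: the local description says $\partial S\cap U=\varphi^{-1}(D^k\times(\partial K(U)))$, which is manifestly a union of plaques $D^k\times\{y\}$. Since any leaf of $S$ is covered by such plaques and whether a point lies in $\partial S$ depends only on its transversal coordinate $y$ (not on the leaf-direction coordinate $x\in D^k$), a point of a leaf lies in $\partial S$ iff the whole plaque does; propagating along the leaf through overlapping flow-boxes shows that if one point of a leaf is in $\partial S$ then the entire leaf is. Thus $\partial S$ is a union of leaves.

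For compactness: the proper interior of $S$ is, by Definition \ref{def:interior}, the interior of $S$ inside the $(k+l)$-manifold $W$, hence an open subset of $W$; therefore $\partial S = S\setminus\operatorname{Int}_W S$ is closed in $S$. Since $S$ is compact, $\partial S$ is compact. Having verified that $\partial S$ is non-empty, compact, and a union of leaves, Proposition \ref{prop:sub-solenoids} immediately yields that it is a $k$-solenoid of class $C^{r,s}$ and transversal dimension $l$, i.e. a sub-solenoid of $S$.

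I expect no serious obstacle here; the one point requiring a little care is the independence of the whole construction from the choice of $W$, but this was already flagged in Definition \ref{def:interior} and the remark preceding it, so I may invoke it. The only thing to make precise is the compatibility of the local boundary descriptions on overlaps, which follows because the transversal part $Y_{ij}(y)$ of a change of charts depends only on $y$ and is a homeomorphism onto its image, hence carries boundary transversal coordinates to boundary transversal coordinates.
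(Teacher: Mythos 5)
Your proposal is correct and follows the same route as the paper: the paper's own proof simply observes that the proper boundary is a compact subset of $S$ and a union of leaves, and then invokes Proposition \ref{prop:sub-solenoids}. You have merely expanded the two assertions the paper leaves implicit (closedness of the complement of the proper interior, and leaf-saturation via the flow-box description of the proper boundary), so there is no substantive difference.
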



\begin{proposition}\label{thm:empty-interior}
Let $S\in \cS$ be a minimal solenoid. If $S$ is not the foliation of
a manifold then $S$ has empty proper interior, i.e. $K(U)\subset
\RR^\ell$ has empty interior for any flow-box $(U,\varphi )$.
\end{proposition}

\begin{proof}
The proper boundary of $S$ is non-empty  because otherwise, for each
flow-box $U$, $K(U)\subset \RR^\ell$ is an open set. Thus $S$ would
be an open subset of $W$, where $W$ is a foliated manifold defining
the solenoid structure of $S$, and so $S$ is itself a foliated
$(k+l)$-manifold. This contradicts the assumption.

Now by minimality the proper boundary must coincide with $S$ and the
proper interior is empty.
\end{proof}

\begin{example} \label{example}
 The dyadic solenoid is obtained as follows. Let $T=\bar{D}^2\times S^1$ be the solid torus, and consider
 the standard embedding $\imath: T\to \RR^3$. Let $f:T\to T$ be the embedding of $T$ into $T$ given
 by stretching the $\bar D^2$-direction and running over the $S^1$-direction twice (see Figure \ref{fi}, such
 $f$ is a hyperbolic map).
 Let $T_n=\imath \circ f^n (T)$, $n\geq 0$, and consider $S=\bigcap_{n\geq 0} T_n$. Then $S$ is a $1$-solenoid with
 $2$-dimensional transversal structure. This can be seen as follows: consider a smooth foliation on
 $T_0-T_1$ which is standard near $\bd T_0=S^1\times S^1$ (i.e. with leaves $\{p\}\times S^1$), and which is
 equal to the foliation $f(\bd T_0)$ on $\bd T_1$. We foliate $T_n-T_{n+1}$ by translating the
 foliation on $T_0-T_1$ via $f^n$. This gives a foliation on $T_0$, smooth on $T_0-S$, and of class $C^{\infty,0}$.
 So $S$ is a solenoid of class $C^{\infty,0}$.

 It is easy to see that $S$ is homeomorphic to the topological space
 $\underleftarrow{\mathrm{lim}}\ \{g^n:S^1 \to S^1\}$, where $g:S^1\to S^1$, $g(z)=z^2$.
 The above construction gives
 this space a solenoid structure.
\end{example}

\begin{figure}[h]
\centering
\resizebox{6cm}{!}{\includegraphics{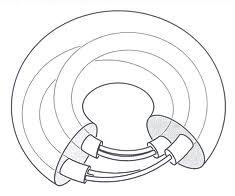}}    
\caption{The dyadic solenoid} \label{fi}
\end{figure}

Solenoids with a one dimensional transversal will play a prominent role
in \cite{MPM4}. We have for these the following structure theorem.

\begin{theorem} \label{thm:1-transversal}
\textbf{\em (Minimal solenoids with a $1$-dimensional
transversal).} Let $S\in \cS$ be a minimal $k$-solenoid which
admits a $1$-dimensional transversal $T$.

Then we have two cases:
 \begin{enumerate}
 \item {}$T$ is a finite union of circles, and $S$ is a $1$-dimensional foliation of a
 connected manifold of dimension $k+1$.
 \item {}$T$ is totally disconnected, in which case we have two further possibilities:
  \begin{enumerate}
   \item {}$T$ is a finite set and $S$ is a connected manifold of dimension $k$,
   \item {}$T$ is a Cantor set.
  \end{enumerate}
 \end{enumerate}
\end{theorem}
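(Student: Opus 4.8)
The plan is to let the topology of the one-dimensional transversal $T$ dictate everything, with minimality (Proposition \ref{prop:dense}) and the globalness of transversals (Proposition \ref{prop:global-transversal}) supplying the homogeneity that collapses the possible cases. Throughout I use that $T$ is compact and metrizable and that, in a flow-box, $T$ is modelled on a compact subset of $\RR$, so that holonomy transport along leaves induces homeomorphisms between open pieces of $T$ and, in particular, preserves the distinction between isolated and non-isolated points.

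First I would settle the coarse dichotomy (1) versus (2) through the interior of $K(U)$. If $T$ is not totally disconnected, it has a connected component with more than one point; in the transversal coordinate this component is a subinterval of $\RR$, so some $K(U)$ has non-empty interior. By the contrapositive of Theorem \ref{thm:empty-interior}, $S$ is then a foliation of a manifold, i.e. the transversals are full ($K(U)$ open in $\RR$) and $S$ is a $(k+1)$-manifold, connected by Proposition \ref{prop:dense}. In this full case $T$ is locally an open interval with no boundary points, hence a compact $1$-manifold without boundary, that is, a finite union of circles: this is case (1). If instead every $K(U)$ has empty interior, then each $K(U)\subset\RR$ has only singletons as connected components, so $T$ is totally disconnected: this is case (2).

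Within case (2) I would prove the finite-or-Cantor alternative by analysing isolated points. Let $T'\subset T$ be the derived set of non-isolated points; it is closed, and $T\setminus T'$ is the open set of isolated points. The crucial homogeneity input is that for every $q\in T$ the orbit $L_q\cap T$ is dense in $T$: given $p\in T$ and an arbitrarily small local transversal at $p$, Proposition \ref{prop:global-transversal} guarantees that this local transversal meets the leaf $L_q$, producing points of $L_q\cap T$ arbitrarily close to $p$. Since holonomy along $L_q$ gives homeomorphisms between pieces of $T$, it carries non-isolated points to non-isolated points; hence if $q\in T'$ then all of $L_q\cap T$ lies in $T'$. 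Consequently, if $T'\neq\emptyset$, choosing $q\in T'$ makes $T'\supset L_q\cap T$ dense in $T$, and since $T'$ is also closed we get $T'=T$. The only surviving possibilities are therefore $T'=\emptyset$ and $T'=T$.

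Finally I would identify the two outcomes. If $T'=\emptyset$ then $T$ is discrete and compact, hence finite; because isolated-ness is holonomy-invariant and $T$ meets every leaf, every flow-box transversal $K(U)$ then consists of isolated points, so locally $S\cong D^k\times K(U)$ reduces to $k$-balls and $S$ is a connected (Proposition \ref{prop:dense}) $k$-manifold: this is case (2a). If $T'=T$ then $T$ is a non-empty, compact, metrizable, totally disconnected, perfect space, and by Brouwer's topological characterization of the Cantor set it is a Cantor set: case (2b). I expect the main obstacle to be the homogeneity step, namely extracting genuine holonomy homeomorphisms of $T$ and the density of $L_q\cap T$ in $T$ from Propositions \ref{prop:dense} and \ref{prop:global-transversal}; once the holonomy action on $T$ is seen to be minimal, the invariance of isolated-ness and the collapse to the listed cases are routine.
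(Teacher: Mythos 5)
Your proof is correct and follows essentially the same route as the paper: the same initial dichotomy (a nondegenerate component of $T$ forcing non-empty proper interior, which you handle by citing Theorem \ref{thm:empty-interior} where the paper re-runs the proper-boundary/minimality argument, versus total disconnectedness), and the same finite-versus-Cantor alternative settled by isolated points and Brouwer's characterization. The only cosmetic difference is in case (2a): the paper observes directly that an isolated point of $T$ forces $S$ to have a single leaf, while you reach the same conclusion by propagating isolatedness through holonomy to every flow-box transversal $K(U)$ via the derived set $T'$ --- both are instances of the same minimality/homogeneity argument.
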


\begin{proof}
We define the proper interior of $T$ as the intersection of the
proper interior of $S$ with $T$. Now we have two cases.

If the proper interior of $T$ is non-empty, then the proper interior
of $S$ is non-empty. Then the complement of the proper interior of
$S$, if non-empty, is a sub-solenoid of $S$, contradicting
minimality. Thus the proper interior of $S$ is all $S$, so the
proper interior of $T$ is the whole of $T$. This means that any
point $p\in T$ has a neighborhood (in $T$) homeomorphic to an
interval. Therefore $T$ is a topological compact $1$-dimensional
manifold, thus a finite union of circles. This ends the first case.

If the proper interior of $T$ is empty, then $T$ is totally
disconnected. In this case, if $T$ has an isolated point $p$, then
$S$ has only one leaf because by minimality any other leaf must
accumulate the leaf containing $p$, and this is only possible if
it coincides with it. Then $S$ is a $k$-dimensional connected
manifold. If $T$ has no isolated points, then $T$ is non-empty,
perfect, compact and totally disconnected, i.e. it is a Cantor
set.
\end{proof}

\section{Holonomy, Poincar\'e return and suspension}\label{sec:holonomy}

We study in this section the holonomy properties of solenoids, some
of which are classical for foliations.

\begin{definition}\label{def:holonomy}\textbf{\em (Holonomy)}
Given two points $p_1$ and $p_2$ in the same leaf, two local
transversals $T_1$ and $T_2$, at $p_1$ and $p_2$ respectively, and a
path $\gamma :[0,1]\to S$, contained in the leaf with endpoints
$\g(0)=p_1$ and $\g(1)=p_2$, we define a germ of a map, the {\rm
holonomy map},
 $$
 h_\gamma: (T_1,p_1) \to (T_2, p_2)\, ,
 $$
by lifting $\gamma$ to nearby leaves.

We denote by ${\Hol}_S (T_1, T_2)$ the set of germs of holonomy
maps from $T_1$ to $T_2$.
\end{definition}


\begin{remark} If $T_1$ and $T_2$ are global transversals then
the sets of holonomy maps from $T_1$ to $T_2$ is non-empty. In
particular, if $S$ is minimal the set of holonomy maps between two
arbitrary local transversals is non-empty.
\end{remark}

\begin{definition}\label{def:holonomy-group} \textbf{\em (Holonomy pseudo-group)}
The {\rm holonomy pseudo-group} of a local transversal $T$ is the
pseudo-group of holonomy maps from $T$ into itself. We denote it by
${\Hol}_S(T)={\Hol}_S(T, T)$.

The holonomy pseudo-group of $S$ is the pseudo-group of all
holonomy maps. We denote it by ${\Hol}_S$,
 $$
 {\Hol}_S=\bigcup_{T_1 ,T_2} {\Hol}_S(T_1, T_2) \, .
 $$
\end{definition}


\begin{remark}  The orbit of a point
$x\in S$ by the holonomy pseudo-group coincides with the leaf
containing $x$.

Therefore, a solenoid $S$ is minimal if and only if the action of
the holonomy pseudo-group is minimal, i.e. all orbits are dense.
\end{remark}

The Poincar\'e return map construction reduces sometimes the
holonomy to a classical dynamical system.

\begin{definition}\label{def:Poincare}\textbf{\em (Poincar\'e return map)}
Let $S$ be an oriented minimal $1$-solenoid and $T$ be a local
transversal. Then the {holonomy return map} is well defined for
all points in $T$ and defines the {\rm Poincar\'e return map}
 $$
 R_T:T\to T \, .
 $$
\end{definition}

The return map is well defined because in minimal solenoids ``half''
leaves are dense.

\begin{lemma} \label{lem:half-leaves}
Let $S$ be a minimal $1$-solenoid. Let $p_0\in S$ and let
$l_0\subset S$ be the leaf containing $p_0$. The point $p_0$ divides
the leaf $l_0$ into two connected components. They are both dense in
$S$.
\end{lemma}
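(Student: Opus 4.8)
The plan is to show each half-leaf is dense by isolating a canonical closed, leaf-saturated subset inside its closure and then invoking minimality. First I would settle the topological type of $l_0$: a leaf of a $1$-solenoid is a connected $1$-manifold, hence homeomorphic to $\RR$ or to $S^1$. If $l_0\cong S^1$ it is compact, hence closed, and by Proposition \ref{prop:dense} dense, so $l_0=S$; since a point does not disconnect a circle, this degenerate case is excluded by the hypothesis, and I may assume $l_0\cong\RR$. Fix a proper parametrization $\gamma_0:\RR\to l_0$ with $\gamma_0(0)=p_0$, so that the two connected components of $l_0\setminus\{p_0\}$ are the rays $l_0^+=\gamma_0((0,\infty))$ and $l_0^-=\gamma_0((-\infty,0))$.

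Next I would introduce the $\omega$-limit set of the forward ray, $L^+=\bigcap_{t>0}\overline{\gamma_0([t,\infty))}$. As a nested intersection of non-empty compact subsets of the compact space $S$, the set $L^+$ is non-empty and compact, and evidently $L^+\subset\overline{l_0^+}$. The heart of the argument is the claim that $L^+$ is a union of leaves. To prove it, take $x\in L^+$; by a diagonal argument there are parameters $s_n\to+\infty$ with $\gamma_0(s_n)\to x$. Given any $x'$ in the leaf through $x$, join $x$ to $x'$ by a path $\sigma$ in that leaf and lift $\sigma$ by holonomy to the nearby leaf passing through $\gamma_0(s_n)$, which is $l_0$ itself. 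The lifted paths lie in $l_0$, have endpoints converging to $x'$, and — since $\sigma$ is covered by finitely many flow-boxes and each plaque has bounded $\gamma_0$-parameter extent — their parameters stay within a bounded window about $s_n$. Hence the endpoints are $\gamma_0(s_n')$ with $s_n'\to+\infty$, so they lie on $l_0^+$ and converge to $x'$, giving $x'\in L^+$. Thus the whole leaf through $x$ lies in $L^+$.

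With the claim in hand, $L^+$ is a non-empty compact union of leaves, hence a sub-solenoid of $S$ by Proposition \ref{prop:sub-solenoids}; minimality of $S$ then forces $L^+=S$. Since $L^+\subset\overline{l_0^+}$, we conclude $\overline{l_0^+}=S$, i.e.\ the forward half-leaf is dense. The identical argument applied to the $\alpha$-limit set $L^-=\bigcap_{t>0}\overline{\gamma_0((-\infty,-t])}$ shows that the backward half-leaf is dense, completing the proof.

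I expect the main obstacle to be the saturation claim for $L^+$: one must verify both that the holonomy lift of the fixed path $\sigma$ remains on the single leaf $l_0$ with endpoints genuinely converging to $x'$ (continuous dependence in flow-boxes), and — crucially — that the lifted path cannot drift back across $p_0$ onto $l_0^-$. The latter is exactly the boundedness of the lift's parameter extent: a path crossing finitely many plaques changes the $\gamma_0$-parameter by a bounded amount, so for large $s_n$ it stays in the region of large positive parameter. Making this bound uniform in $n$ is the one point requiring care; everything else is a direct application of minimality through Propositions \ref{prop:dense} and \ref{prop:sub-solenoids}.
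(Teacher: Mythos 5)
Your proof is correct, and its skeleton is the same as the paper's: isolate a non-empty, compact, leaf-saturated subset of the closure of the half-leaf, conclude it is a sub-solenoid by proposition \ref{prop:sub-solenoids}, and invoke minimality. The difference lies entirely in how saturation is established. The paper takes $C$ to be the accumulation set of the half-leaf and verifies saturation locally: in a flow-box, accumulation by plaques of the half-leaf at a point propagates along the whole plaque through that point (at most one plaque per box meets $p_0$), so $C\cap l$ is open in $l$; it is also closed, and connectedness of $l$ finishes the claim. You instead work with the $\omega$-limit set $L^+$ and prove saturation globally, by holonomy-lifting a path $\sigma$ from $x$ to $x'$ and tracking the $\gamma_0$-parameter of the lifted endpoints. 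That works, but it buys you the bookkeeping you yourself flag: the uniform bound on the parameter drift is \emph{not} automatic for an arbitrary proper parametrization (plaque extents of $l_0$ in a fixed flow-box need not be uniformly bounded in the parameter), so you should fix $\gamma_0$ to be the arc-length parametrization with respect to a Riemannian metric on $S$ (which always exists, section \ref{sec:riemannian}); then plaques of good flow-boxes have uniformly bounded length and your bound $|s_n'-s_n|\leq B$, hence $s_n'\to+\infty$, is legitimate. The paper's local open--closed argument sidesteps parametrizations and the drift-past-$p_0$ issue altogether, which is what makes its proof shorter; your version, once the arc-length normalization is inserted, is a complete and somewhat more explicit alternative, and your preliminary reduction ($l_0\cong\RR$ versus $S^1$) correctly isolates the degenerate circle case that the paper's statement tacitly excludes.
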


\begin{proof}
Consider one connected component of $l_0-\{p_0\}$, and let $C$ be
its accumulation set. Then $C$ is non-empty, by compactness of $S$,
and it is compact, as a closed subset of the compact solenoid $S$.
It is also a union of leaves because if $l\subset S$ is a leaf, then
$C\cap l$ is open in $l$ as is seen in flow-boxes, and also $C\cap
l$ is closed in $l$. Therefore by connectedness of $l$, $C\cap l$ is
empty or $l\subset C$.

We conclude that $C$ is a sub-solenoid, and by minimality we have
$C=S$.
\end{proof}

When $S$ admits a global transversal (in particular when $S$ is
minimal and admits a transversal) and the Poincar\'e return map is
well defined, we have
that it is continuous (without any assumption on minimality of $S$).

\begin{proposition}\label{prop:continuity}
Let $S$ be an oriented $1$-solenoid and let $T$ be a global transversal
such that the Poincar\'e return map $R_T$ is well defined. Then the
holonomy return map $R_T$ is continuous.
If the Poincar\'e return map for the reversed orientation of $S$ is
also well defined, then $R_T$ is a homeomorphism of $T$. Moreover, if $S$ is a
solenoid of class $C^{r,s}$ then $R_T$ is a $C^s$-diffeomorphism.
\end{proposition}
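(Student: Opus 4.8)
The plan is to establish continuity of the Poincaré return map $R_T$ by working in flow-boxes and using the fact that the holonomy maps are, by construction, restrictions of the smooth transverse change-of-charts data, hence locally of class $C^s$. Since $R_T$ assigns to each point $p\in T$ the first return to $T$ of the positively oriented half-leaf emanating from $p$, the key is to show that this ``first return'' depends continuously on $p$. I would fix a point $p_0\in T$ with return point $q_0=R_T(p_0)\in T$, and consider the path $\gamma_0$ in the leaf from $p_0$ to $q_0$ that realizes the first return. Covering the compact image $\gamma_0([0,1])$ by finitely many flow-boxes, I can compose the local holonomies along this chain to obtain a germ $h_{\gamma_0}$ defined on a neighborhood of $p_0$ in $T$; this germ is $C^s$ by Definition~\ref{def:W} and the regularity of the changes of charts.

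First I would argue that for $p$ in a small enough neighborhood $V$ of $p_0$ in $T$, the half-leaf from $p$ lifts along the same chain of flow-boxes and meets $T$ near $q_0$, so that $h_{\gamma_0}(p)$ is defined and lies in $T$. The content to verify is that $h_{\gamma_0}(p)$ is in fact the \emph{first} return of the half-leaf from $p$, not merely \emph{a} return. This is where global transversality enters: because $T$ is global and $R_T$ is well defined, the first return is attained, and I must rule out the possibility that nearby half-leaves hit $T$ strictly earlier than the lifted path predicts. The main obstacle is precisely this: a priori an earlier intersection with $T$ could appear for $p$ near $p_0$, making $R_T$ jump. I would handle it by a compactness/openness argument on the portion of the leaf $\gamma_0$ strictly between $p_0$ and $q_0$: this arc is compact and disjoint from $T$ (no return before $q_0$), so it stays in the complement of the closed set $T$, and lifting to nearby leaves keeps the corresponding arcs disjoint from $T$ as well, for $p$ sufficiently close to $p_0$. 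Hence the first return for $p\in V$ is given by $h_{\gamma_0}(p)$, and continuity of $R_T$ on $V$ follows from the $C^s$-regularity of $h_{\gamma_0}$.

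Once continuity is established locally at every point, it holds globally on $T$. For the homeomorphism statement, I would observe that when the Poincaré return map $R_T^-$ for the reversed orientation is also well defined, the same argument shows $R_T^-$ is continuous, and that $R_T^-$ is the inverse of $R_T$: following the positively oriented half-leaf to its first return and then the negatively oriented half-leaf back recovers the starting point, since the arc realizing the first return is traversed in the opposite sense. Thus $R_T$ is a continuous bijection with continuous inverse, i.e. a homeomorphism. Finally, the improvement to a $C^s$-diffeomorphism in the $C^{r,s}$ case is immediate from the local description: each $h_{\gamma_0}$ is a composition of the transverse parts $Y_{ij}$ of finitely many changes of charts, which are $C^s$ by Definition~\ref{def:W}, and the same applies to the inverse; since $R_T$ coincides locally with such a $C^s$ germ, $R_T$ and $R_T^{-1}$ are $C^s$, completing the proof.
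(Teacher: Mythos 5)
Your route coincides with the paper's on the second and third claims: the inverse of $R_T$ is the return map $R_T^-$ for the reversed orientation, and the $C^s$ regularity comes from the fact that holonomy is assembled from the transverse parts $Y_{ij}$ of the changes of charts (the paper words this as a local extension of $R_T$ to a $C^s$-diffeomorphism between neighborhoods of $T$ in the ambient $l$-dimensional manifold). For the continuity claim the paper offers only the one-line assertion that preimages of open sets are open, so your holonomy-germ argument along a chain of flow-boxes is precisely the elaboration a complete proof needs, and its skeleton is right.

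However, one step is false as written: the portion of $\gamma_0$ strictly between $p_0$ and $q_0$ is disjoint from $T$ but \emph{not} compact (its closure contains $p_0,q_0\in T$), while the closed arc is compact but not disjoint from $T$; so you cannot directly invoke a positive distance between "this arc" and the closed set $T$. The repair uses exactly the structure given by Definition \ref{def:transversal}: choose flow-boxes $U_0\ni p_0$ and $U_N\ni q_0$ so small that $T\cap U_0$ and $T\cap U_N$ are single slices $\{0\}\times K$, and pick $\delta>0$ with $\gamma_0([0,\delta])\subset U_0$ and $\gamma_0([1-\delta,1])\subset U_N$. The truncated arc $\gamma_0([\delta,1-\delta])$ is genuinely compact and disjoint from $T$, so your distance argument applies to it and to nearby lifts; on the two end segments, the product structure of the flow-box shows that a lifted leaf segment $D^1\times\{y\}$ meets the slice exactly once, namely at the starting point $p$ (in $U_0$) and at the terminal point $h_{\gamma_0}(p)$ (in $U_N$), so no early return can occur there either. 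This is not a pedantic point: it is where the paper's notion of transversal (locally a \emph{full} slice of a flow-box) does real work, and it is what excludes the classical jump discontinuities of first-return maps to cross-sections with boundary --- the failure mode the paper itself acknowledges for local transversals immediately after the proposition. With this patch your proof is complete.
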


\begin{proof}
The map $R_T$ is continuous because
the inverse image of an open set is clearly open.

If the Poincar\'{e} return map $R_T^-$ for the same transversal obtained for
the reverse orientation of $S$ is also well defined, then
$R_T$ is bijective because by construction its inverse is $R_T^-$.
Hence $R_T$ is a
homeomorphism of $T$. Moreover, letting $W$ be a foliated manifold defining the solenoid structure of $S$,
$T$ is a subset of an open manifold $U$ of dimension $l$, and the
map $R_T$ extends as a homeomorphism $U_1\to U_2$, where $U_1$, $U_2$ are neighborhoods of $T$
(at least locally). If the
transversal regularity of $S$ is $C^s$ then the local extension of
$R_T$ is a $C^s$-diffeomorphism.
\end{proof}

\medskip

When $T$ is only a local transversal then in general $R_T$ is not
continuous. Nevertheless the discontinuities of $R_T$ are well
controlled in practice and are innocuous when we deal with measure
theoretic properties of $R_T$.

\medskip

The suspension construction reverses Poincar\'e construction of the
first return map.

\begin{definition}\label{def:suspension}\textbf{\em (Suspension construction)}
Let $X\subset \RR^\ell$ be a compact set and let $f:X\to X$ be a
homeomorphism which has a $C^s$-diffeomorphism extension to a
neighborhood of $X$ in $\RR^\ell$. The {\rm suspension} of $f$ is
the oriented $1$-solenoid $S_f$ defined by the suspension
construction
 $$
 S_f=([0,1]\times X)_{/(0,x)\sim (1,f(x))} \, .
 $$

\end{definition}

\begin{remark}
The solenoid $S_f$ has regularity $C^{\omega ,s}$ (the transition
maps are constructed with $f$).

The transversal $T=\{0 \}\times X$ is a global transversal and the
associated Poincar\'e return map $R_T:T\to T$ is well defined and
equal to $f$.
\end{remark}

In particular, the theory of dynamical systems for $X\subset \RR^\ell$
and diffeomorphisms $f:X\to X$ (extending to a neighborhood of $X$)
is contained in the theory of transversal structures of solenoids.

Note that example \ref{example} is a $1$-solenoid constructed by suspension.
The transversal $T$ is a Cantor set, homeomorphic to the $2$-adic integers
$\ZZ_2$, and the suspension map is $f:\ZZ_2\to \ZZ_2$, $n\mapsto n+1$.

\section{Measurable transversal structure of solenoids}\label{sec:transversal-structure}

In this section we study measure theory on solenoids, and in
particular the measurable transverse structure.

\begin{definition} \label{def:transversal-measure} \textbf{\em (Transversal measure)}
Let $S$ be a $k$-solenoid. A {\rm transversal measure} $\mu=(\mu_T)$
for $S$ is a collection of locally finite measures, each $\mu_T$
being associated to each local transversal $T$ and supported on $T$,
which are invariant by the holonomy pseudo-group (see definition
\ref{def:holonomy-group}). More precisely, if $T_1$ and $T_2$ are
two transversals and $h : V\subset T_1 \to T_2$ is a holonomy map,
then
 $$
 h_* (\mu_{T_1}|_{V})= \mu_{T_2}|_{h(V)} \, .
 $$
We assume that a transversal measure $\mu$ is non-trivial, i.e. for
some $T$, $\mu_T$ is non-zero.

We denote by $S_\mu$ a $k$-solenoid $S$ endowed with a transversal
measure $\mu=(\mu_T)$. We refer to $S_\mu$ as a {\rm measured solenoid}.
\end{definition}

Observe that for any transversal measure $\mu=(\mu_T)$ the scalar
multiple $c\, \mu=(c \, \mu_T)$, where $c>0$, is also a transversal
measure. Notice that there is no natural scalar normalization of
transversal measures.

\begin{definition}\label{def:support} \textbf{\em (Support of a transversal measure)}
Let $\mu=(\mu_T)$ be a transversal measure. We define the {\rm support
of $\mu$} by
 $$
 \supp \mu=\bigcup_T \supp \mu_T \, ,
 $$
where the union runs over all local transversals $T$ of $S$.
\end{definition}

\begin{proposition}\label{prop:support}
The support of a transversal measure $\mu$ is a sub-solenoid of $S$.
\end{proposition}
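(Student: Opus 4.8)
The plan is to show that $\supp\mu$ is a nonempty compact subset of $S$ that is a union of leaves, and then invoke Proposition \ref{prop:sub-solenoids} to conclude that it is a sub-solenoid. Nonemptiness is immediate from the standing assumption that $\mu$ is non-trivial, so $\supp\mu_T$ is nonempty for some transversal $T$. Compactness follows because $\supp\mu$ is a closed subset of the compact solenoid $S$: in any flow-box $(U,\varphi)$ with $\varphi:U\to D^k\times K(U)$, the set $\supp\mu$ consists of those leaf-plaques $\varphi^{-1}(D^k\times\{y\})$ with $y\in\supp\mu_{T_U}$, where $T_U=\varphi^{-1}(\{0\}\times K(U))$ is the local transversal; since the support of a measure is closed in $T_U$, this exhibits $\supp\mu\cap U$ as a closed subset of $U$, and closedness is a local property.

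The heart of the argument is to verify that $\supp\mu$ is saturated, i.e.\ a union of leaves. First I would establish that the notion of support is well defined leaf-wise inside a single flow-box: the preceding flow-box description shows that whether a point $p\in U$ lies in $\supp\mu$ depends only on its transversal coordinate $y\in K(U)$, hence only on which plaque of $U$ contains $p$. It remains to show that this property is consistent across overlapping flow-boxes and, more importantly, propagates along an entire leaf. This is exactly where the holonomy-invariance of $\mu$ is needed: if $p_1,p_2$ lie on the same leaf, a path in that leaf joining them induces a holonomy germ $h\in\Hol_S(T_1,T_2)$ between local transversals at $p_1$ and $p_2$, and the relation $h_*(\mu_{T_1}|_V)=\mu_{T_2}|_{h(V)}$ forces $h$ to carry $\supp\mu_{T_1}$ onto $\supp\mu_{T_2}$ near the endpoints. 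Consequently $p_1\in\supp\mu$ if and only if $p_2\in\supp\mu$, so each leaf lies either entirely inside $\supp\mu$ or entirely outside it.

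The main obstacle I anticipate is the bookkeeping needed to turn the holonomy-invariance identity, which is a statement about germs and local pieces of transversals, into the global leaf-saturation statement. One must cover a path $\gamma$ along the leaf by finitely many flow-boxes, decompose the holonomy germ as a composition of elementary transition germs between consecutive transversals, and check that the support is preserved at each elementary step; then chain these together. Since support is preserved by a homeomorphism pushforward ($p\in\supp\nu \iff h(p)\in\supp h_*\nu$ for a homeomorphism $h$), each step is routine, but one must be careful that the germs are genuinely defined on neighborhoods containing the relevant points and that the invariance identity applies on the overlap where both $\mu_{T_1}$ and $\mu_{T_2}$ are defined. Once leaf-saturation is in hand, the three hypotheses of Proposition \ref{prop:sub-solenoids}---nonempty, compact, union of leaves---are all met, and the conclusion that $\supp\mu$ is a sub-solenoid follows immediately.
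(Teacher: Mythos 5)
Your proposal is correct and shares the paper's overall skeleton --- nonemptiness from non-triviality of $\mu$, closedness (hence compactness) checked flow-box by flow-box since $\supp \mu_{K(U)}$ is closed in $K(U)$, leaf-saturation, and then Proposition \ref{prop:sub-solenoids} --- but the mechanism you use for the saturation step is genuinely different. The paper stops at the local statement: in each flow-box, $\supp\mu$ is a union of plaques (this is where holonomy invariance enters, through the local holonomies inside a single flow-box). From this, $l\cap\supp\mu$ is open in each leaf $l$; being also closed, connectedness of $l$ forces $l$ to be either disjoint from $\supp\mu$ or contained in it --- the same open-plus-closed argument the paper uses in Lemma \ref{lem:half-leaves}. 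You instead propagate along the leaf directly, chaining holonomy germs along a path joining $p_1$ to $p_2$ and using that pushforward by a (local) homeomorphism carries supports to supports. Both arguments are valid. The paper's route trades your path-covering bookkeeping (finitely many flow-boxes along $\gamma$, composition of elementary germs) for the short ``open, closed, connected'' argument; your route has the mild advantage that saturation is obtained from holonomy invariance alone, without appealing to closedness of $\supp\mu$, which you then only need for compactness. Either way all hypotheses of Proposition \ref{prop:sub-solenoids} are met and the conclusion follows.
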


\begin{proof}
For any flow-box $U$, $\supp \mu\cap U$ is closed in $U$, since
$\supp \mu_{K(U)}$ is closed in $K(U)$. Hence, $\supp \mu$ is closed
in $S$. Also, locally in
flow-boxes $\supp \mu$ contains full leaves of $U$. Therefore a
leaf of $S$ is either disjoint from $\supp \mu$ or contained in
$\supp  \mu$. Also $\supp \mu $ is non-empty because $\mu$ is
non-trivial. We conclude that $\supp \mu$ is a sub-solenoid.
\end{proof}

\begin{definition} \label{def:transversal-ergodicity}\textbf{\em (Transverse ergodicity)}
A transversal measure $\mu=(\mu_T )$ on a solenoid $S$ is {\rm ergodic} if for any Borel set
$A\subset T$ invariant by the pseudo-group of holonomy maps on $T$,
we have
 $$
 \mu_T(A) = 0 \ \ {\hbox{\rm{ or }}} \ \ \mu_T(A) = \mu_T(T) \, .
 $$

We say that $S_\mu$ is an {\rm ergodic solenoid}.
\end{definition}

\begin{definition} \label{def:transversal-unique-ergodicity}
\textbf{\em (Transverse unique ergodicity)} Let $S$ be a
$k$-solenoid. The solenoid $S$ is {\rm transversally uniquely ergodic}, or
a {\rm uniquely ergodic solenoid}, if $S$ has a unique up to scalars
transversal measure $\mu$ and moreover $\supp \mu =S$.
\end{definition}

Observe that in order  to define these notions we only need
continuous transversals. These ergodic notions are intrinsic and
purely topological, i.e. if $S_{1}$ and $S_{2}$ are two homeomorphic solenoids by a
homeomorphism $h:S_1\to S_2$, then $S_1$ is uniquely ergodic if and only if $S_2$ is.
If $S_{1,\mu_1}$ and $S_{2,\mu_2}$ are homeomorphic and $\mu_2=h_* \mu_1$ via the
homeomorphism $h:S_1\to S_2$, then
$S_{1,\mu_1}$ is ergodic if and only if $S_{2,\mu_2}$ is.

\medskip

These notions of ergodicity generalize the classical ones and do
exactly correspond to the classical notions in the situation described
by the next theorem.

\begin{theorem}\label{thm:ergodic}
Let $S$ be an oriented $1$-solenoid. Let $T$ be a global transversal
such that the Poincar\'e return map $R_T: T\to T$ is well defined.

Then the solenoid $S$ is minimal, resp.\ ergodic, uniquely ergodic,
if and only if $R_T$ is minimal, resp.\ ergodic, uniquely ergodic.
\end{theorem}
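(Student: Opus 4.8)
The plan is to reduce all three equivalences to the single structural fact that, because $T$ is a global transversal of an oriented $1$-solenoid, the holonomy pseudo-group $\Hol_S(T)$ is generated by the germs of the Poincar\'e return map $R_T$ and its inverse. The key lemma I would establish first is: given any holonomy germ $h:(T,t_1)\to (T,t_2)$ carried by a leaf-path $\g$, the path $\g$ meets $T$ in a finite ordered set $t_1=s_0,s_1,\dots,s_m=t_2$, with finiteness coming from covering the compact arc $\g$ by finitely many flow-boxes, in each of which a plaque meets $T$ in a single point. Each consecutive step $s_i\to s_{i+1}$ is a forward or backward first return, hence a germ of $R_T^{\pm1}$, so $h$ is a composition of such germs. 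Two consequences, used throughout, are that the holonomy orbit of $t\in T$ inside $T$ equals $\{R_T^n(t):n\in\ZZ\}=L_t\cap T$, where $L_t$ is the leaf through $t$, and that a Borel set $A\subset T$ is invariant under $\Hol_S(T)$ if and only if it is $R_T$-invariant.

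For minimality, I would show that $L_t$ is dense in $S$ if and only if the $R_T$-orbit $L_t\cap T$ is dense in $T$, and then invoke Proposition \ref{thm:orbit}. For the direct implication, given a nonempty relatively open $V\subset T$ I choose a flow-box $U$ whose central transversal is a neighbourhood of a point of $V$; density of $L_t$ forces $L_t$ to contain a plaque of $U$, and that plaque meets $T\cap U\subset V$. For the converse, given a nonempty open $W\subset S$ I pick a flow-box $U\subset W$ with central local transversal $\t$, choose $q\in\t$, and use globality of $T$ to get a holonomy germ $h$ carrying a neighbourhood of $q$ in $\t$ onto an open $V\subset T$; density of the $R_T$-orbit of $t$ supplies some $R_T^n(t)\in V$, whose holonomy preimage lies in $\t\subset W$ and on $L_t$, so $L_t\cap W\neq\emptyset$. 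Hence $S$ is minimal iff $R_T$ is minimal.

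For the ergodic statements I would set up a bijection between transversal measures $\mu=(\mu_{T'})$ on $S$ and nonzero $R_T$-invariant Borel measures $\nu$ on $T$. One direction is restriction, $\mu\mapsto\mu_T$; since $R_T\in\Hol_S(T)$ the measure $\mu_T$ is $R_T$-invariant, and it is nonzero because $\supp\mu$ is a subsolenoid (Proposition \ref{prop:support}) which, $T$ being global, must meet $T$. Conversely, from $\nu$ I reconstruct $\mu$ on an arbitrary local transversal $T'$ by pushing $\nu$ forward through holonomy germs joining pieces of $T'$ to $T$; the key lemma guarantees independence of the chosen connection, since any two differ by an element of $\Hol_S(T)$, a composition of $R_T^{\pm1}$, under which $\nu$ is invariant. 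The resulting $\mu$ is holonomy-invariant with $\mu_T=\nu$. Because holonomy-invariant and $R_T$-invariant subsets of $T$ coincide, transverse ergodicity of $\mu$ (Definition \ref{def:transversal-ergodicity}) is literally ergodicity of $\mu_T$ for $R_T$, giving the ergodic equivalence.

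Unique ergodicity follows from the same bijection: uniqueness up to scalars of the transversal measure on $S$ is equivalent to uniqueness up to scalars of the $R_T$-invariant measure on $T$. It remains to match the support condition of Definition \ref{def:transversal-unique-ergodicity}: since $\supp\mu\cap T=\supp\mu_T$ and $\supp\mu$ is a union of leaves each meeting $T$, one has $\supp\mu=S$ iff $\supp\mu_T=T$. Hence $S$ is uniquely ergodic iff $R_T$ is. I expect the main obstacle to be the key lemma together with well-definedness of the reconstruction $\nu\mapsto\mu$: both hinge on controlling how a leaf-path crosses the global transversal and on showing every holonomy return to $T$ is a power of $R_T$, which is exactly where orientation, one-dimensionality of the leaves, globality of $T$, and the density of half-leaves (Lemma \ref{lem:half-leaves}) all enter.
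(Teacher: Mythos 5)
Your proposal is correct, and its skeleton is the paper's: minimality is handled by the dictionary between dense leaves and dense $R_T$-orbits, and the two ergodicity statements by a correspondence between $R_T$-invariant measures on $T$ and transversal measures on $S$. The one genuine difference is the device used to make the transport $\nu\mapsto(\mu_{T'})$ well defined. The paper uses the canonical \emph{first-impact} map $R_{T',T}:T'\to T$ (follow leaves in the positive orientation until the first hit on $T$; injective by construction) and pulls $\nu$ back, so there is never a choice of path and hence no independence issue; it then verifies ergodicity/unique ergodicity by decomposing measures rather than by invariant sets. You instead prove the structural lemma that every germ in $\Hol_S(T)$ factors as a finite composition of germs of $R_T^{\pm1}$ (finiteness of $\g\cap T$ plus first-return identification of consecutive crossings), and transport $\nu$ by arbitrary holonomy germs, using the lemma for independence of the choice. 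Your lemma is correct — the only point you should make explicit is that $R_T$ is automatically injective here, since leaves are one-dimensional and the backward arc from a point is unique, so $(R_T)_*\nu=\nu$ really does give invariance of $\nu$ under the inverse germs your decomposition produces — and it buys two things the paper leaves implicit: the holonomy-invariance of the reconstructed family $(\mu_{T'})$, and the literal identification of holonomy-invariant Borel subsets of $T$ with $R_T$-invariant ones, which makes the ergodicity equivalence set-theoretically transparent. You also address the support condition in Definition \ref{def:transversal-unique-ergodicity} explicitly (via globality of $T$ and $\supp\mu$ being a union of leaves), which the paper's ``the equivalence of unique ergodicity follows'' glosses over. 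What the paper's route buys in exchange is economy: the first-impact map needs no generation lemma, and the same map reappears in the proof of Proposition \ref{prop:5.7}, so the two arguments share machinery.
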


\begin{proof}
We have by proposition \ref{prop:continuity} that $R_T$ is
continuous. A leaf of $S$ is dense if and only if its intersection
with $T$ is a dense orbit of $R_T$, hence the equivalence of
minimality.

For the ergodicity, observe that we have a correspondence between
measures on $T$ invariant by $R_T$ and transversal measures for $S$.
Each transversal measure for $S$, locally defines a measure on $T$,
hence defines a measure on $T$. Conversely, given a measure $\nu$ on
$T$, we can transport $\nu$ in order to define a measure in each
local transversal $T'$ in the following way. We can define a map
$R_{T',T}: T'\to T$ of first impact on $T$ by following leaves of
$S$ from $T'$ in the positive orientation. By the global character
of the transversal this map is well defined. By construction
$R_{T',T}$ is injective. So we can define
$\mu_{T'}=R_{T',T}^*\nu_{R_{T',T}(T')}$. Then $(\mu_{T'})$ defines a
transversal measure. The equivalence of unique ergodicity follows.
Also $\nu$ is ergodic if and only $(\mu_{T'})$ is ergodic because any
decomposition of $\nu=\nu_1+\nu_2$ induces a decomposition of
$(\mu_{T'})$ by the transversal measures corresponding to the
decomposing measures.
\end{proof}

When we have an ergodic oriented $1$-solenoid $S_\mu$ and $T$ is a
local transversal, then the Poincar\'e return map is well defined
$\mu_T$-almost everywhere and $\mu_T$ is ergodic.

\begin{proposition}\label{prop:5.7}
Let $S$ be an oriented $1$-solenoid and let $T$ be a local
transversal of $S$. Let $\mu$ be an ergodic transversal measure for
$S$. Then the Poincar\'e return map $R_T$ is well defined for
$\mu_T$-almost all points of $T$ and $\mu_T$ is an ergodic measure
of $R_T$.
\end{proposition}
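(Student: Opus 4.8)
The plan is to reduce the statement to the first-return map of a \emph{global} transversal, where Theorem~\ref{thm:ergodic} applies directly, and then recover the local statement by realizing $R_T$ as a first-return (induced) map via Poincar\'e recurrence. First I would pass to the support $N=\supp\mu$, which is a sub-solenoid by Proposition~\ref{prop:support}. Since $\supp\mu_T=T\cap N$ and the forward half-leaf of a point of $T\cap N$ stays inside $N$, the return map of $T$ agrees with that of the local transversal $T\cap N$ of $N$ at $\mu_T$-almost every point, so it suffices to treat the case $\supp\mu=S$. Covering the compact solenoid by finitely many flow-boxes and taking their central transversals $\varphi_i^{-1}(\{0\}\x K_i)$, which after a small perturbation of the transverse levels may be assumed pairwise disjoint, Proposition~\ref{prop:two-transversals} yields a transversal $\mathcal{T}\supseteq T$; it is \emph{global}, because every leaf meeting a flow-box crosses the corresponding central transversal. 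Note that $\mu_{\mathcal{T}}$ is finite, by compactness of $\mathcal{T}$ and local finiteness of $\mu$.

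Next I would show that the forward return map $R_{\mathcal{T}}$ is defined \emph{everywhere} on $\mathcal{T}$. Given $p\in\mathcal{T}$, the accumulation set of the forward half-leaf $\ell_p^+$ is non-empty by compactness of $S$, and it is a union of leaves by the open-and-closed argument used in Lemma~\ref{lem:half-leaves}; hence it is a sub-solenoid by Proposition~\ref{prop:sub-solenoids}. Since $\mathcal{T}$ is global, it meets this sub-solenoid at some point $z$, so $\ell_p^+$ accumulates at the transversal point $z$; transversality, i.e.\ the fact that the plaques of $\ell_p^+$ near $z$ are full plaques crossing $\{0\}\x K_i$, forces $\ell_p^+$ to actually cross $\mathcal{T}$, so $R_{\mathcal{T}}(p)$ exists. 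Running the same argument for the reversed orientation shows the backward return map is everywhere defined as well, whence by Proposition~\ref{prop:continuity} the map $R_{\mathcal{T}}$ is a homeomorphism of $\mathcal{T}$ preserving the finite measure $\mu_{\mathcal{T}}$.

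Finally I would invoke Theorem~\ref{thm:ergodic}: ergodicity of the transversal measure $\mu$ is equivalent to ergodicity of $R_{\mathcal{T}}$ with respect to $\mu_{\mathcal{T}}$. The subset $T\subseteq\mathcal{T}$ has positive measure, and the first-return map of $R_{\mathcal{T}}$ to $T$ is exactly $R_T$, since the $R_{\mathcal{T}}$-orbit of a point enumerates its successive forward crossings with $\mathcal{T}$ and the first of these lying in $T$ is precisely the first forward crossing of the leaf with $T$. Poincar\'e recurrence for the measure-preserving homeomorphism $R_{\mathcal{T}}$ on the finite-measure space $(\mathcal{T},\mu_{\mathcal{T}})$ then shows that $R_T$ is defined for $\mu_T$-almost every point of $T$, and the standard fact that the induced map of an ergodic transformation on a positive-measure subset is ergodic yields that $\mu_T$ is ergodic for $R_T$.

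The routine verifications (invariance of $\mu_{\mathcal{T}}$ under the holonomy map $R_{\mathcal{T}}$, disjointness of the central transversals, and the agreement of the two return maps after passing to $N$) are straightforward. The one genuinely delicate step, which I expect to be the main obstacle, is the everywhere-definedness of $R_{\mathcal{T}}$: one must argue carefully that accumulation of a half-leaf at a point of a global transversal produces an \emph{actual} crossing rather than a merely tangential approach, and it is exactly the full-plaque structure near the accumulation point that rules this out.
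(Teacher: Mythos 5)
Your reduction to $N=\supp\mu$ and your analysis of why accumulation of a half-leaf at a point of a full-level transversal forces an actual crossing are both sound, but the proof has a genuine gap at its central step: the construction of a global transversal $\mathcal{T}\supseteq T$. First, the disjointness claim fails as stated: the central transversals of a finite flow-box cover cannot in general be made pairwise disjoint ``by a small perturbation of the transverse levels''. If $T_1$ crosses a flow-box $U_2$ diagonally (say, in a foliated surface, a transverse arc whose image in $U_2\cong D^1\times K_2$ is a graph over the $D^1$-factor), then \emph{every} level $\{x\}\times K_2$ meets $T_1$, so no choice of level works; one would have to trim $T_1$ or $T_2$, and compact pieces of transversals are not transversals in this paper's sense (a transversal must locally be a \emph{full} level $\{0\}\times K(U)$ with $K(U)$ relatively open, so trimming creates boundary points where this fails, except when the transverse structure is totally disconnected). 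Second, and more seriously, a global transversal containing $T$ may not exist at all: consider the flow on $T^2$ generated by $X(x,y)=(\cos 2\pi y,\ \epsilon \sin 2\pi y)$, which has two anti-homologous periodic orbits $\gamma_1,\gamma_2$, and let $T$ cross $\gamma_1$. Here any transversal is a finite union of disjoint transverse circles, each crossed by all orbits from the same side, so its intersection numbers with $[\gamma_1]$ and $[\gamma_2]=-[\gamma_1]$ have the same sign; hence both vanish, every transversal misses $\gamma_1$, and no $\mathcal{T}\supseteq T$ exists, even though the proposition applies to the ergodic atomic measure carried by $\gamma_1$. Your preliminary reduction to $\supp\mu=S$ happens to dispose of this particular example (there $N=\gamma_1$ is a circle), but you give no argument that ergodicity together with full support guarantees a global transversal through $T$ in general; that is precisely the load-bearing assertion of your strategy, and it is far from a ``routine verification''.

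The paper's proof avoids global transversals entirely and is purely local and measure-theoretic: it introduces the wandering set $A_T\subset T$ of points whose positive half-leaves never return to $T$, and observes that if $\mu_T(A_T)\neq 0$ one could decompose $\mu_{T|A_T}$ and transport the decomposition back and forward by holonomy to a nontrivial decomposition of the transversal measure $\mu$, contradicting ergodicity (extremality, cf.\ Proposition~\ref{prop:extremal-ergodic-transversal}); hence $\mu_T(A_T)=0$ and $R_T$ is defined $\mu_T$-almost everywhere. The same transport argument shows that any decomposition of $\mu_T$ into $R_T$-invariant measures decomposes $\mu$, which gives ergodicity of $\mu_T$. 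To salvage your approach you would need either to prove the existence of global transversals under the full-support hypothesis, or to replace $\mathcal{T}$ by a measure-theoretic substitute---at which point you would essentially be rewriting the paper's argument.
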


\begin{proof}
Let $A_T\subset T$ be the set of wandering points of $T$, i.e. those
points whose positive half leaves through them never meet $T$ again.
Clearly $A_T$ is a Borel set. If $\mu_T (A_T)\not= 0$ we can
decompose $\mu_T$ by decomposing $\mu_{T |A_T}$ and transporting the
decomposition (back and forward) by the holonomy in order to
decompose the transversal measure. Therefore $\mu_T (A_T)=0$. As
before, a decomposition of $\mu_T$ into invariant measures by $R_T$
yields a decomposition of the transversal measure $\mu$ invariant by
holonomy.
\end{proof}

Recall that a dynamical system is minimal when all orbits are dense,
and that uniquely ergodic dynamical systems are minimal. We have the
same result for uniquely ergodic solenoids.

\begin{proposition}\label{cor:ergodic-1-solenoid}
An oriented uniquely ergodic $1$-solenoid $S$ is minimal.
\end{proposition}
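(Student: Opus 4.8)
The plan is to argue by contradiction, mirroring the classical fact that a uniquely ergodic dynamical system with full support is minimal, and to transport a Krylov--Bogolyubov measure from a minimal sub-solenoid back up to $S$. Suppose $S$ is uniquely ergodic, with transversal measure $\mu$ satisfying $\supp\mu = S$ (Definition \ref{def:transversal-unique-ergodicity}), but that $S$ is not minimal. By Proposition \ref{prop:dense} there is a leaf $L$ that is not dense, so its closure $\overline L$ is a proper compact union of leaves, hence a proper sub-solenoid $\overline L \subsetneq S$ (Proposition \ref{prop:sub-solenoids}). Applying Theorem \ref{thm:inductive} together with Zorn's lemma to the sub-solenoids contained in $\overline L$, I obtain a minimal sub-solenoid $S_1 \subseteq \overline L \subsetneq S$; it is an oriented minimal $1$-solenoid, the orientation being inherited from $S$.

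Next I would produce a transversal measure living on $S_1$. Fix a local transversal $T_1$ of $S_1$. Since $S_1$ is minimal, all its transversals are global (Proposition \ref{prop:global-transversal}) and, by Lemma \ref{lem:half-leaves}, positive half-leaves are dense, so the Poincar\'e return map $R_{T_1}\colon T_1\to T_1$ is everywhere defined (Definition \ref{def:Poincare}) and continuous (Proposition \ref{prop:continuity}). As $T_1$ is a compact metrizable space and $R_{T_1}$ is continuous, the Krylov--Bogolyubov theorem supplies an $R_{T_1}$-invariant Borel probability measure $\nu$ on $T_1$. The correspondence established in the proof of Theorem \ref{thm:ergodic} (setting $\mu_{T'}=R_{T',T_1}^{*}\nu$ for every local transversal $T'$ of $S_1$) turns $\nu$ into a transversal measure $\mu^{(1)}$ on $S_1$, which is non-trivial since $\mu^{(1)}_{T_1}=\nu$ is a probability measure.

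Finally I would push $\mu^{(1)}$ forward to $S$ by extension by zero. Because $S_1\subset S$ is a union of leaves, every local transversal $T$ of $S$ meets $S_1$ in a local transversal $T\cap S_1$ of $S_1$, and the holonomy pseudo-group of $S$ preserves $S_1$; hence setting $\mu'_T := \mu^{(1)}_{T\cap S_1}$, extended by zero to $T$, defines a holonomy-invariant, non-trivial transversal measure $\mu'$ on $S$ with $\supp\mu'\subseteq S_1\subsetneq S$. By unique ergodicity $\mu'$ must be a positive scalar multiple of $\mu$, whence $\supp\mu'=\supp\mu=S$, contradicting $\supp\mu'\subseteq S_1$. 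This contradiction shows that $S$ is minimal.

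The step I expect to be the main obstacle is manufacturing a transversal measure supported on the proper sub-solenoid: a general solenoid need not obviously carry one, and the real content of the argument is precisely the reduction to a minimal sub-solenoid, where minimality guarantees a globally defined, continuous Poincar\'e return map and so lets Krylov--Bogolyubov and the measure correspondence of Theorem \ref{thm:ergodic} apply. The remaining bookkeeping---checking that restriction and extension of a transversal measure across $S_1\subset S$ respect holonomy invariance and non-triviality---is routine once one uses that $S_1$ is a union of leaves.
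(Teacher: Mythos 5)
The overall strategy is sound and is in fact close in spirit to the paper's own proof (argue by contradiction by producing a holonomy-invariant transversal measure supported inside the closure of a non-dense leaf), and your reduction to a minimal sub-solenoid $S_1\subseteq \overline{L}$ via Zorn's lemma is unobjectionable. The genuine gap is the Krylov--Bogolyubov step. You fix a \emph{local} transversal $T_1$ of $S_1$ and assert that it is a compact metrizable space and that $R_{T_1}$ is continuous by Proposition \ref{prop:continuity}. Neither assertion is available. In the paper's terminology a local transversal is homeomorphic to the transversal space $K(U)\subset\RR^l$ of a flow-box and need not be compact; only \emph{transversals} in the sense of Definition \ref{def:transversal} are compact by definition, and Proposition \ref{prop:continuity} applies only to \emph{global} transversals, i.e.\ compact transversals meeting all leaves. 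Indeed the paper explicitly warns, right after Proposition \ref{prop:continuity}, that for a local transversal the return map is in general \emph{not} continuous. Minimality of $S_1$ (Proposition \ref{prop:global-transversal} and Lemma \ref{lem:half-leaves}) gives that $R_{T_1}$ is everywhere defined, but gives neither compactness of $T_1$ nor continuity of $R_{T_1}$. Moreover, the existence of a compact global transversal in $S_1$ is established nowhere: the paper always takes it as a hypothesis (Theorems \ref{thm:ergodic} and \ref{thm:1-transversal}) rather than proving it. Without a compact space and a continuous self-map, Krylov--Bogolyubov does not apply, so the step that manufactures the invariant measure---which you yourself single out as the main obstacle---is unsupported.

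The paper circumvents exactly this difficulty. It works directly with the non-dense leaf $l$: taking an exhaustion $(l_n)$ of $l$ by compact pieces, it forms the probability measures equidistributed on $l_n\cap T_0$ for a local transversal $T_0$ meeting $\bar l$ but not contained in it, and passes to a weak-$*$ limit $\nu$. Invariance of $\nu$ is then checked against holonomy \emph{germs}, which are continuous by construction, so no return map, no continuity statement, and no compact global transversal are ever needed; the limit is supported in $T_0\cap\bar l$, and transporting it by holonomy yields a transversal measure whose support is contained in $\bar l\neq S$, contradicting $\supp\mu=S$. Your argument becomes correct if the Krylov--Bogolyubov step is replaced by this counting-measure construction---in which case the passage to the minimal sub-solenoid $S_1$ and the extension-by-zero bookkeeping become unnecessary---or if one first proves that a minimal oriented $1$-solenoid admits a compact global transversal, a statement the paper never establishes and which is not obvious for arbitrary transversal dimension $l$.
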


\begin{proof}
If $S$ has a non-dense leaf $l\subset S$, we can consider a local
transversal $T_0$ such that $T_0\cap \bar l \not= \emptyset$. Let
$(l_n)$ be an exhaustion of $l$ by compact subsets. Let $\mu_n$ be
the atomic probability measure on $T_0$ equidistributed on the
intersection of $l_n$ with $T_0$. Any limit measure $\mu_{n_k}\to
\nu$ is a probability measure on $T_0$ with $\supp \nu \subset
T_0\cap \bar l$. It follows easily that $\nu$ is invariant by the
holonomy on $T_0$. Transporting by the holonomy, $\nu$ defines a
transversal measure $\mu=(\mu_T)$ (up to normalization, in each
transversal it is also a limit of counting measures). But this
contradicts unique ergodicity since $\supp \mu \not= S$.
\end{proof}

Given a measured solenoid $S_\mu$ we can talk about ``$\mu$-almost
all leaves'' with respect to the transversal measure. More
precisely, a Borel set of leaves has zero $\mu$-measure if the
intersection of this set with any local transversal $T$ is a set of
$\mu_T$-measure zero.

Now Poincar\'e recurrence theorem for classical dynamical systems
translates as:

\begin{proposition} \label{cor:Poincare-recurrence} \textbf{\em (Poincar\'e recurrence)}
Let $S_\mu$ be an ergodic oriented $1$-solenoid with $\supp \mu =S$.
Then $\mu$-almost all leaves are dense and accumulate themselves.
\end{proposition}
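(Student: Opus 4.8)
The plan is to transfer both assertions to the Poincar\'e return map on a fixed local transversal and then invoke the corresponding classical results. Fix a local transversal $T$. By Proposition \ref{prop:5.7} the return map $R_T$ is defined $\mu_T$-almost everywhere and $\mu_T$ is an ergodic invariant measure for $R_T$; discarding a $\mu_T$-null set we may assume $R_T$ is a genuine measure-preserving transformation of a full-measure invariant Borel subset $T'\subseteq T$. Because $\supp\mu=S$ and $T\subseteq S$, the measure $\mu_T$ has full support, $\supp\mu_T=T$. The forward orbit $\{R_T^n p : n\geq 0\}$ of a point $p\in T'$ is contained in the intersection with $T$ of the leaf $l_p$ through $p$, so statements about orbits of $R_T$ translate directly into statements about leaves. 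The two conclusions then reduce to two standard facts about the ergodic, full-support system $(T',R_T,\mu_T)$: almost every point is recurrent, and almost every orbit is dense.

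For self-accumulation I would apply the classical Poincar\'e recurrence theorem to $R_T$: since $\mu_T$ is finite ($T$ is compact and $\mu_T$ is locally finite) and $R_T$-invariant, $\mu_T$-almost every $p\in T'$ returns to every neighborhood of itself infinitely often. Geometrically, the orbit $\{R_T^n p\}\subseteq l_p\cap T$ accumulates at $p$, so the leaf $l_p$ returns arbitrarily close to each of its transversal points; hence $l_p$ accumulates itself. As this holds for $\mu_T$-almost every $p$ on every local transversal, $\mu$-almost every leaf accumulates itself.

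For density I would run the standard ergodic argument. Choose a countable family of flow-boxes $\{V_n\}$ whose interiors cover $S$. For each $n$ let $E_n\subseteq T'$ be the set of $p$ whose leaf $l_p$ misses $V_n$; this set is Borel and saturated along leaves, hence $R_T$-invariant, so by ergodicity $\mu_T(E_n)\in\{0,\mu_T(T)\}$. It cannot be full: the leaves meeting $V_n$ meet the transversal $K(V_n)$ of $V_n$ in a set of positive $\mu_{K(V_n)}$-measure (full support), and transporting this measure to $T$ by the holonomy invariance of $\mu$ shows $\mu_T(T'\setminus E_n)>0$. Therefore $\mu_T(E_n)=0$, and $\bigcap_n(T'\setminus E_n)$ has full $\mu_T$-measure. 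A point $p$ in this intersection has $l_p\cap V_n\neq\emptyset$ for all $n$, so $l_p$ is dense in $S$; hence $\mu$-almost every leaf is dense.

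The main obstacle is the bookkeeping in this last step, caused by $T$ being only a local transversal: one must justify that the saturated condition ``$l_p$ misses $V_n$'' reads as an honestly $R_T$-invariant subset of $T'$, and that full support propagates from $K(V_n)$ to $T$. Both rest on the holonomy invariance built into the definition of a transversal measure (Definition \ref{def:transversal-measure}), used to transport $\mu_{K(V_n)}$ to $\mu_T$ along the leaves reaching $T$, exactly as in the argument of Proposition \ref{cor:ergodic-1-solenoid}. Care is needed because $R_T$, and the relevant holonomy maps, are only defined off a $\mu$-null set, so every measure-theoretic assertion must be carried out on the invariant full-measure set $T'$.
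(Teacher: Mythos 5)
Your recurrence half is sound and is essentially the paper's own argument: Proposition \ref{prop:5.7} gives that $R_T$ is defined $\mu_T$-a.e.\ with $\mu_T$ invariant and ergodic, and the classical Poincar\'e recurrence theorem then yields self-accumulation of $\mu$-almost every leaf. The genuine gap is in the density half, at the single step where you rule out $\mu_T(E_n)=\mu_T(T)$. You assert that, by holonomy invariance, $\mu_{K(V_n)}$ can be ``transported to $T$ along the leaves reaching $T$'', giving $\mu_T(T'\setminus E_n)>0$. But holonomy invariance only constrains holonomy maps that actually exist: it yields nothing unless you already know that a set of positive $\mu_{K(V_n)}$-measure in $K(V_n)$ consists of points whose leaves reach $T$. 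The set $F_n\subset K(V_n)$ of such points is invariant under the holonomy pseudo-group of $K(V_n)$, so by ergodicity (definition \ref{def:transversal-ergodicity}) it has zero or full measure, and nothing in your argument excludes $\mu_{K(V_n)}(F_n)=0$. In that case the transport is vacuous: $E_n\subset T$ and $K(V_n)\setminus F_n$ would be two full-measure saturated sets sitting on different transversals, with no contradiction available, because ergodicity in this paper is a \emph{per-transversal} condition, not a statement about globally saturated subsets of $S$. Your appeal to the argument of Proposition \ref{cor:ergodic-1-solenoid} does not close this: that proof constructs a transversal measure from limits of counting measures along one leaf, and again only pushes measures along holonomy maps already known to exist.

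Upgrading per-transversal ergodicity to the needed global statement is precisely the content the paper supplies and you omit. The paper first observes that ergodicity (with $\supp\mu=S$) forces $S$ to be connected, then covers $S$ by finitely many flow-boxes $U_i$ such that each nonempty $U_i\cap U_j$ is again a flow-box, and propagates ``zero or total measure'' of a saturated set along chains of overlapping boxes: full support makes the overlap transversal have positive $\mu$-measure, holonomy invariance transfers the measure of the trace across the overlap, and ergodicity on $K(U_j)$ bootstraps ``positive'' to ``full'' (equivalently ``not full'' to ``zero''); connectedness makes such chains reach every box. Once this lemma is inserted, your scheme works: $\mu_T(E_n)$ full would force the saturated set of leaves missing $V_n$ to be full in every flow-box, contradicting that it is empty over $K(V_n)$, which has positive measure by full support. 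Everything else in your write-up (full support of each $\mu_T$, $R_T$-invariance of $E_n$, running the argument for an arbitrary local transversal $T$ so as to match the definition of ``$\mu$-almost all leaves'') is correct or routine; the chain/connectedness step is the one missing idea, and it is not mere bookkeeping.
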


\begin{proof}
For each local transversal $T$ we know by proposition \ref{prop:5.7}
that the Poincar\'e return map $R_T$ is
defined for $\mu_T$-almost every point and leaves invariant $\mu_T$.
Therefore by Poincar\'e recurrence theorem, $\mu_T$-almost every
point has a dense orbit by $R_T$ in $\supp \mu_T=T$.

Observe that $S_\mu$ ergodic implies that $S$ is connected
(otherwise we may decompose the invariant measure by restricting it to each
connected component).

By compactness we can choose a finite number of local transversals
$T_i=\varphi^{-1}(\{ 0\}\x K(U_i))$ with flow-boxes $\{U_i\}$
covering $S$. We can assume that we have that $U_i\cap U_j$ is a
flow-box if non-empty. This assumption and the connectedness of $S$
imply that any Borel set of leaves that has either total or zero
measure in a flow-box $U_i$, has the same property in $S$.

Now, the set of leaves $S_i$ that are non-dense in a given
flow-box $U_i$ is of zero $\mu$-measure in $U_i$ (by Poincar\'e
recurrence theorem applied to $R_{T_i}$). By the above,
$S_i$ is of zero $\mu$-measure in $S$. Finally the set of
non-dense leaves in $S$ is the finite union of the $S_i$,
therefore is a set of leaves of zero $\mu$-measure.
\end{proof}

We denote by $\cM_\cT (S)$ the space of transversal positive measures
on the solenoid $S$ equipped with the topology generated by weak
convergence in each local transversal.

We denote by $\overline\cM_\cT (S)$ the quotient of $\cM_\cT(S)$
by positive scalar multiplication.

\begin{proposition}\label{prop:extremal-ergodic-transversal}
The space $\cM_\cT (S)$ is a cone in the vector space of all
transversal signed measures $\cV_\cT (S)$. 
Extremal measures of $\cM_\cT (S)$ correspond to ergodic tranversal measures.
\end{proposition}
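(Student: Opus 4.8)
The plan is to treat the two assertions separately. For the cone statement I would check directly that $\cM_\cT(S)$ is closed under the cone operations inside $\cV_\cT(S)$. That $c\,\mu\in\cM_\cT(S)$ for $c>0$ was already observed after Definition \ref{def:transversal-measure}, so it remains to verify closure under addition: given $\mu=(\mu_T)$ and $\nu=(\nu_T)$ in $\cM_\cT(S)$, the family $(\mu_T+\nu_T)$ consists of locally finite positive measures, and for any holonomy map $h:V\subset T_1\to T_2$ one has $h_*((\mu_{T_1}+\nu_{T_1})|_V)=h_*(\mu_{T_1}|_V)+h_*(\nu_{T_1}|_V)=(\mu_{T_2}+\nu_{T_2})|_{h(V)}$ by linearity of pushforward, so $\mu+\nu$ is again holonomy-invariant. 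Since the signed transversal measures form a vector space $\cV_\cT(S)$ in which $\cM_\cT(S)$ is exactly the subset of families of positive measures, this exhibits $\cM_\cT(S)$ as a convex cone.

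For the second assertion, recall that a nonzero $\mu\in\cM_\cT(S)$ is extremal when every decomposition $\mu=\mu_1+\mu_2$ with $\mu_1,\mu_2\in\cM_\cT(S)$ forces $\mu_1,\mu_2$ to be nonnegative multiples of $\mu$; the goal is to match this with ergodicity in the sense of Definition \ref{def:transversal-ergodicity}. I would first prove that ergodicity implies extremality by a Radon--Nikodym argument transverse to the leaves. Given a decomposition $\mu=\mu_1+\mu_2$, on any local transversal $T$ we have $(\mu_1)_T\le \mu_T$, so $(\mu_1)_T=f\,\mu_T$ for a Borel density $0\le f\le 1$. The key computation is that $f$ is holonomy-invariant $\mu_T$-almost everywhere: for $h\in\Hol_S(T)$, pushing forward $(\mu_1)_T=f\,\mu_T$ and comparing with the holonomy invariance of both $\mu_1$ and $\mu$ gives $(f\circ h^{-1})\,\mu_T|_{h(V)}=f\,\mu_T|_{h(V)}$, whence $f\circ h^{-1}=f$ on $h(V)$. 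Since ergodicity, stated as triviality of invariant Borel sets, is equivalent to the almost-everywhere constancy of invariant densities, it forces $f\equiv c$, so $(\mu_1)_T=c\,\mu_T$; transporting this identity to every other transversal by holonomy shows $\mu_1=c\,\mu$ and $\mu_2=(1-c)\mu$, so $\mu$ is extremal.

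Conversely, I would show that a non-ergodic $\mu$ is not extremal. If $\mu$ is not ergodic there is a holonomy-invariant Borel set $A\subset T$ with $0<\mu_T(A)<\mu_T(T)$. Saturating $A$ by leaves produces a union of leaves whose intersection with each local transversal $T'$ is a holonomy-invariant Borel set $A'$; restricting each $\mu_{T'}$ to $A'$, respectively to its complement, yields two families $\mu_1$ and $\mu_2$, and invariance of $A$ guarantees that both are again transversal measures with $\mu=\mu_1+\mu_2$. Because $\mu_T(A)$ is a strict intermediate value, neither summand is proportional to $\mu$, so $\mu$ is not extremal. Together the two implications give the asserted correspondence.

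The main obstacle I anticipate is the bookkeeping that turns the ``single transversal'' statements into global ones: verifying that the density $f$, defined a priori on one transversal, is consistent across all transversals and therefore glues to a single constant, and dually that the saturation $A'$ is well defined and Borel on every transversal. Both rest on the holonomy invariance built into Definition \ref{def:transversal-measure}, but some care is needed to ensure the constructed families $\mu_i$ genuinely satisfy the invariance axiom on overlapping flow-boxes, using that $\supp\mu$ is a sub-solenoid by Proposition \ref{prop:support}.
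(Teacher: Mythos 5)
Your proof is correct, and it is in fact more complete than the paper's own. For the implication the paper actually proves --- non-ergodic implies non-extremal --- your argument is the same as theirs: take a holonomy-invariant Borel set $A\subset T_0$ of intermediate measure, saturate by leaves, and split $\mu$ into the transversal measures conditioned on the saturation and on its complement; since $\mu_{T_0}(A)$ lies strictly between $0$ and $\mu_{T_0}(T_0)$, neither summand is proportional to $\mu$. The paper dismisses the cone statement as obvious (``only the last part needs a proof'') and then stops after this single implication: it never proves that ergodic measures are extremal, which is the other half of the asserted correspondence. Your Radon--Nikodym argument supplies exactly that missing half, and it is the standard, correct way to do it: write $(\mu_1)_T=f\,\mu_T$, show $f$ is holonomy-invariant, and use ergodicity to force $f$ to be a.e.\ constant. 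Two technical points deserve the care you flag at the end: (i) Definition \ref{def:transversal-ergodicity} concerns strictly invariant Borel sets, while your density $f$ is only almost-everywhere invariant under each holonomy germ, so one needs countable generation of the holonomy pseudo-group (from a countable cover by flow-boxes) to pass from a.e.\ invariance to a.e.\ constancy; (ii) the constant $c$ produced on one transversal must be shown to agree across transversals, e.g.\ by applying the ergodicity hypothesis to a transversal containing two given ones (their disjoint union, cf.\ Proposition \ref{prop:two-transversals}) so that the glued density is again a.e.\ constant. These caveats are at the same level of rigor as the paper's own unproved assertion that the saturations $S_A$, $S_B$ are Borel; with them handled, your proof establishes both implications of the correspondence, whereas the paper's establishes only one.
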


\begin{proof}
Only the last part needs a proof. If $(\mu_T)$ is not ergodic, then
there exists a local tranversal $T_0$ and two disjoint Borel set
$A,B\subset T_0$ invariant by holonomy with $\mu_{T_0}(A)\not=0$,
$\mu_{T_0}(B)\not=0$ and
$\mu_{T_0}(A)+\mu_{T_0}(B)=\mu_{T_0}({T_0})$. Let $S_A\subset S$,
resp.\ $S_B\subset S$, be the union of leaves of the solenoid
intersecting $A$, resp.\ $B$. These are Borel subsets of $S$. Let
$(\mu_{T |T\cap S_A})$ and $(\mu_{T |T\cap S_B})$ be the transversal
measures conditional to $T\cap S_A$, resp.\ $T\cap S_B$. Then
 $$
 (\mu_T)=(\mu_{T |T\cap S_A})+(\mu_{T |T\cap S_B}) \, ,
 $$
and $(\mu_T)$ is not extremal.
\end{proof}

\begin{corollary} \label{cor:vaya-lio}
If $\cM_\cT (S)$ is non-empty then $\cM_\cT (S)$ contains ergodic
measures.
\end{corollary}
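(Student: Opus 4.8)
The plan is to deduce this from Proposition \ref{prop:extremal-ergodic-transversal}: since every extremal element of the cone $\cM_\cT(S)$ is ergodic, it suffices to show that the non-empty cone $\cM_\cT(S)$ possesses extremal elements. The natural tool is the Krein--Milman theorem, which produces extreme points of a compact convex set in a locally convex space. Thus I would manufacture a compact convex \emph{base} of the cone, namely a compact convex set $\cB\subset\cM_\cT(S)$ meeting each ray of the cone exactly once, so that its extreme points are precisely the extremal rays of $\cM_\cT(S)$.

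By compactness of $S$ I first fix a finite cover by good flow-boxes $U_1,\dots,U_N$ with associated compact transversals $T_1,\dots,T_N$ (shrinking the boxes if needed so that each $T_i$ is compact). Any transversal measure $\mu=(\mu_T)$ restricts to finite measures $\mu_{T_i}$ on the $T_i$, and by holonomy invariance it is determined by this finite collection. I would use the linear functional $L(\mu)=\sum_{i=1}^N \mu_{T_i}(T_i)$ as normalization and set $\cB=\{\mu\in\cM_\cT(S):L(\mu)=1\}$. To see that $L$ is a genuine base functional, note that for $\mu\neq 0$ its support $\supp\mu$ is by Proposition \ref{prop:support} a non-empty sub-solenoid; it meets some $U_i$, and there it contains full plaques, whence $\mu_{T_i}\neq 0$ and $L(\mu)>0$. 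Consequently each ray of the cone meets $\cB$ in exactly one point, and extreme points of $\cB$ correspond bijectively to extremal measures of $\cM_\cT(S)$.

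The crux is the compactness of $\cB$ in the weak topology of Definition \ref{def:M_T}. Embedding $\cB$ coordinatewise into the product of the spaces of finite positive Borel measures on the compact metric spaces $T_i$, the mass bound $\mu_{T_i}(T_i)\leq 1$ gives weak-$*$ precompactness (Banach--Alaoglu together with closedness of positivity), and the normalization $L=1$ passes to the limit. The condition requiring care is that holonomy invariance survives weak limits: each holonomy germ $h:V\subset T_i\to T_j$ is realized on the relevant compact piece by a $C^s$-homeomorphism with continuous extension, so $\mu\mapsto h_*(\mu_{T_i}|_V)-\mu_{T_j}|_{h(V)}$ is weakly continuous and the invariance constraint cuts out a closed set; intersecting over all germs keeps $\cB$ closed, hence compact. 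By Krein--Milman, $\cB$ then has an extreme point $\mu_0$, which by the base property is an extremal measure of $\cM_\cT(S)$, and by Proposition \ref{prop:extremal-ergodic-transversal} is ergodic. The main obstacle in making this rigorous is precisely the weak-limit stability of holonomy invariance, since the holonomy pseudo-group is only locally defined: one must verify the constraints on a countable generating family of germs, consistently across the overlaps $U_i\cap U_j$, so that the limit is again a bona fide transversal measure.
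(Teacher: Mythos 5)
Your argument is correct in substance, but it follows a genuinely different route from the paper. The paper's proof is essentially a one-liner: it applies Krein--Milman after identifying the projectivized cone $\overline\cM_\cT(S)$ with the set $\cM_\cL(S)$ of probability daval measures via an auxiliary Riemannian metric (theorem \ref{thm:transverse-riemannian}), whose compactness and convexity are already established in proposition \ref{prop:M_L-compact} using lemma \ref{lem:desintegrate} (compactness of $\cM(S)$ on the compact space $S$, plus closedness of the desintegration property under weak limits). You instead stay entirely in the transversal picture: you normalize by the functional $L(\mu)=\sum_i \mu_{T_i}(T_i)$ over a finite family of compact transversals to build a base $\cB$ of the cone, and you get compactness from Banach--Alaoglu on the product of measure spaces over the $T_i$, checking that holonomy invariance is weak-$*$ closed. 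What the paper's route buys is that all the analytic work (the substitute for your closedness verification) is packaged once in lemma \ref{lem:desintegrate}, and the Riemannian volume gives a canonical normalization; what your route buys is self-containedness -- no Riemannian structure or daval measures are needed, and the argument makes transparent that unique ergodicity and ergodicity are purely transversal, topological notions. Both proofs then conclude identically via proposition \ref{prop:extremal-ergodic-transversal}.

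One wording in your crux step should be repaired: the map $\mu\mapsto h_*(\mu_{T_i}|_V)-\mu_{T_j}|_{h(V)}$ is \emph{not} weakly continuous, because restriction to an open set $V$ is not weak-$*$ continuous (mass can concentrate on the relative boundary of $V$ along a weakly convergent sequence). What is true, and suffices, is that the invariance constraint is weak-$*$ \emph{closed}: the equality $h_*(\mu_{T_i}|_V)=\mu_{T_j}|_{h(V)}$ is equivalent to the family of scalar equations $\int (\phi\circ h)\, d\mu_{T_i}=\int \phi \, d\mu_{T_j}$ over continuous $\phi$ with compact support in $h(V)$; each such $\phi\circ h$, extended by zero, is continuous on $T_i$, so each equation passes to weak limits, and the constraint set is an intersection of closed sets. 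For the same reason your worry about a countable generating family of germs is unnecessary: closedness needs no countability (arbitrary intersections of closed sets are closed), and compactness comes from Banach--Alaoglu without metrizability; countability would only matter if you insisted on sequential arguments.
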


We shall provide the proof of this result after theorem \ref{thm:transverse-riemannian}.

\section{Riemannian solenoids}\label{sec:riemannian}

In this section we endow solenoids with a Riemannian structure and
we study their metric properties.

All measures considered are Borel measures and all limits of
measures are understood in the weak-* sense. We denote by $\cM (S)$
the space of probability measures supported on $S$.

\begin{definition} \label{def:riemannian}\textbf{\em (Riemannian solenoid)}
Let $S$ be a $k$-solenoid of class $C^{r,s}$ with $r\geq 1$. A
{\rm Riemannian structure on $S$} is a Riemannian metric on the leaves
of $S$ such that there is a foliated manifold $W$ defining the
solenoid structure of $S$ and a metric $g_W$ on the leaves of $W$ of
class $C^{r-1,s}$ such that $g=g_{W|S}$.
\end{definition}

For instance, example \ref{example} can be given a Riemannian structure by restricting the
Riemannian metric of $\RR^3$ to the leaves of the solenoid.

As for compact manifolds, via a partition of unity, any solenoid can
be endowed with a Riemannian structure.

In the rest of this section
$S$ denotes a Riemannian solenoid.
Note that a Riemannian structure defines a $k$-volume on the leaves
of $S$. This is a function $\Vol_k$ which assigns to any subset $A\subset l$
on a leaf $l\subset S$ its volume with respect to the Riemannian metric on
the leaf.

\begin{definition}\textbf{\em (Flow group)} \label{def:flow-group}
We define the {\rm flow group} $G_S^0$ of a Riemannian $k$-solenoid $S$ as
the group of $k$-volume preserving diffeomorphisms of $S$ isotopic to
the identity in the group of diffeomorphims of $S$. We define the
{\rm extended flow group} $G_S$ as the group of $k$-volume preserving diffeomorphisms
of $S$. 
\end{definition}

Note that we do not claim that $G_S^0$ is the connected component of the identity
of $G_S$, although this may well be true.

\begin{definition}\label{def:desintegrate}\textbf{\em (daval
measures)} Let $\mu$ be a measure supported on $S$. The measure
$\mu$ is a {\rm daval measure} if it disintegrates as volume along leaves
of $S$, i.e.\ for any flow-box $(U,\varphi)$ with local transversal
$T=\varphi^{-1}(\{0\}\x K(U))$, we have a measure $\mu_{U,T}$
supported on $T$ such that for any Borel set $A\subset U$
 $$
 \mu(A)=\int_{T} {\Vol}_k(A_y) \ d\mu_{U,T}(y) \, ,
 $$
where $A_y=A\cap \varphi^{-1} (D^k\times \{ y \} )\subset U$.

We denote by $\cM_\cL (S)\subset \cM (S)$ the space of probability
daval measures.
\end{definition}

It follows from this definition that the measures $(\mu_{U,T})$ do
indeed define a transversal measure as we prove in the next
proposition.

\begin{proposition} \label{prop:desintegrate} Let $\mu$ be a daval measure on $S$.
Then we have the following properties.
 \begin{enumerate}
 \item[(i)] For a local transversal $T$, the measures $\mu_{U,T}$ do not depend on
 $U$. So they define a unique measure $\mu_T$ supported on $T$.
 \item[(ii)] The measures $(\mu_T)$ are uniquely determined by $\mu$.
 \item[(iii)] The measures $(\mu_T)$ are locally finite.
 \item[(iv)] The measures $(\mu_T)$ are invariant by holonomy and
 therefore define a transversal measure.
 \end{enumerate}
\end{proposition}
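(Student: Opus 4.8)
The plan is to verify the four claimed properties in order, since each one builds naturally on the previous. The essential tool throughout is that $\mu$ is a daval measure, so in any flow-box $(U,\varphi)$ it disintegrates as
$$
\mu(A)=\int_{T} \Vol_k(A_y)\, d\mu_{U,T}(y),
$$
and the Riemannian $k$-volume on leaves is an intrinsic quantity independent of the flow-box chart.

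First I would prove (i), that $\mu_{U,T}$ depends only on the local transversal $T$ and not on the ambient flow-box $U$. Given two flow-boxes $U$ and $U'$ both containing the same local transversal $T$, I would shrink to their intersection and compare the disintegration formulas. For a Borel set $A$ contained in $U\cap U'$ of the form $A=\varphi^{-1}(B\times C)$ with $B\subset D^k$ and $C\subset K(U)$, the two expressions $\int_T \Vol_k(A_y)\,d\mu_{U,T}(y)$ and $\int_T \Vol_k(A_y)\,d\mu_{U',T}(y)$ must agree; by varying $B$ so that $\Vol_k(A_y)$ ranges over a separating family of functions of $y$, one forces $\mu_{U,T}=\mu_{U',T}$ on $C$. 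This lets us drop the subscript $U$ and write $\mu_T$. Property (ii), uniqueness of the family $(\mu_T)$ given $\mu$, is then essentially immediate: the disintegration formula determines $\mu_T$ by testing $\mu$ against sets $A=\varphi^{-1}(D^k\times C)$ and differentiating out the fixed factor $\Vol_k(D^k)$, so any two families inducing the same $\mu$ coincide. Property (iii), local finiteness, follows because each $\mu_T$ is obtained from the finite measure $\mu$ restricted to a relatively compact flow-box, normalized by the positive finite volume $\Vol_k(D^k)$.

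The heart of the proposition is (iv), holonomy invariance, and this is the step I expect to be the main obstacle. Let $h:V\subset T_1\to T_2$ be a holonomy map between two local transversals, realized by sliding along leaves inside a chain of flow-boxes. The key idea is that holonomy moves points along leaves without changing which leaf they lie on, and the disintegration weight $\Vol_k(A_y)$ is a leafwise quantity. I would reduce to the case where $T_1$ and $T_2$ sit in a single flow-box (or a short chain of overlapping ones) connected by the holonomy, so that for a Borel set $W\subset T_2$ the preimage leaves through $h^{-1}(W)$ and through $W$ sweep out the same collection of leaf-plaques up to a set of zero $k$-volume. Applying the disintegration formula to a flow-box adapted to this sliding and using that $\Vol_k$ is computed intrinsically on each leaf, both $h_*(\mu_{T_1}|_V)$ and $\mu_{T_2}|_{h(V)}$ are seen to compute the same $\mu$-measure of the intervening region, forcing them to agree. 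The delicate points here are handling the composition of holonomy maps across several flow-boxes and ensuring the leafwise volumes match up under the change of transversal; but because holonomy is by definition a leaf-preserving germ and $\Vol_k$ is leaf-intrinsic, no Jacobian factor appears and the measures transport cleanly. Once (iv) is established, the family $(\mu_T)$ satisfies exactly the defining condition of a transversal measure in Definition \ref{def:transversal-measure}, completing the proof.
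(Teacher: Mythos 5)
Your strategy for part (iv), the heart of the proposition, is essentially the paper's own: the paper also reduces an arbitrary holonomy $h_\gamma$ to the case of two vertical transversals $T_i=\varphi^{-1}(\{x_i\}\times K(U))$ of one single flow-box constructed around the whole path $\gamma$ (thereby avoiding any composition across chains of flow-boxes), and then compares the two disintegrations of $\mu$ on $U$: since the plaque through $y\in T_1$ is literally the same plaque as the one through $h(y)\in T_2$, the change of variables $y'=h(y)$ carries no Jacobian, and testing against horizontal Borel sets $\varphi^{-1}(B\times C)$ forces $h_*\mu_{U,T_1}=\mu_{U,T_2}$. Your observation that no Jacobian appears because $\Vol_k$ is leaf-intrinsic is exactly the paper's point.

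For (i)--(iii) you genuinely diverge from the paper, and this is where your write-up has gaps. The paper proves (i) and (ii) in one stroke by an intrinsic recovery formula: for a Borel set $C\subset T$,
$$
\mu_{U,T}(C)=\lim_{\epsilon\to 0}\frac{\mu\left(V_\epsilon(C)\right)}{c(k)\,\epsilon^k}\,,
$$
where $V_\epsilon(C)$ is the $\epsilon$-neighborhood of $C$ along leaves and $c(k)=\lim_{\epsilon\to 0}\Vol_k(B_\epsilon^g(y))/\epsilon^k$; the right-hand side involves only $\mu$, the metric and $C$, so independence of $U$ and uniqueness of $\mu_T$ are immediate. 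Your comparison argument instead equates $\int_T\Vol_k(A_y)\,d\mu_{U,T}(y)$ with $\int_T\Vol_k(A_y)\,d\mu_{U',T}(y)$, but the symbol $A_y$ denotes different sets in the two integrals: the intersection of $A$ with the plaque of $U$ through $y$ in the first, and with the plaque of $U'$ through $y$ in the second. These two plaque decompositions of $A$ need not coincide --- a single $U'$-plaque can collect pieces of $A$ lying in several $U$-plaques through other points of $T$ on the same leaf --- so the equality of integrands that your separating-family argument relies on requires an extra step, e.g.\ taking $B$ a small ball around $0$, uniformly over compact $C$, so that both decompositions of $A$ agree. Second, in (ii) and (iii) you treat the plaque volume as the ``fixed factor $\Vol_k(D^k)$''; this is false, because $\Vol_k(\varphi^{-1}(D^k\times\{y\}))$ is the leafwise \emph{Riemannian} volume, which varies with $y$ (the chart is not an isometry). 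The paper handles (iii) by using continuity of $y\mapsto\Vol_k(L_y)$ to obtain a uniform lower bound $\epsilon_0>0$ over a compact $C\subset T$, whence $\mu_T(C)\leq\mu(U)/\epsilon_0<\infty$; your arguments for (ii) and (iii) should be amended the same way, dividing by a positive continuous function rather than a constant. Both defects are repairable by routine compactness and continuity arguments, but as written the identities you rely on in (i)--(iii) are not justified.
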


\begin{proof}
For (i) and (ii) notice that for any Riemannian metric $g$ we have,
denoting by $B_\epsilon^g (y)$ the Riemannian ball of radius $\epsilon$ around $y$
in its leaf,
 $$
 \lim_{\epsilon \to 0} \frac{\Vol_k(B_\epsilon^g (y))}
 {\epsilon^k}=c(k) \, ,
 $$
where $c(k)$ is a constant only depending on $k$. Therefore by
dominated convergence we have for any Borel set $C\subset T$
 $$
 \mu_{U,T}(C)=\lim_{\epsilon \to 0} \int_T
 \frac{{\Vol}_k(B_\epsilon^g (y))}{c(k) \epsilon^k} \ d\mu_{U,T}(y)
 =\lim_{\epsilon \to 0} \frac{\mu(V_{\epsilon} (C))}{c(k) \epsilon^k}
 \, ,
$$
where $V_\epsilon(C)$ denotes the $\epsilon$-neighborhood of $C$ along
leaves. The last limit is clearly independent of $U$, thus
$\mu_{U,T}$ is independent of $U$ as claimed, and $\mu_T$ is
uniquely determined by $\mu$.

\medskip

For (iii) observe that for each flow-box $U$ we have that
 $$
 y\mapsto {\Vol}_k (L_y)\, ,
 $$
$L_y=\varphi^{-1}(D^k\x\{y\})$, is continuous on $y\in T$, therefore
for any compact subset $C\subset T$ exists $\epsilon_0
>0$ such that for all $y\in C$,
 $$
 {\Vol}_k (L_y) \geq \epsilon_0 \, .
 $$
Let $V=\varphi^{-1}(D^k\x C)$, then we have
 \begin{equation}\label{eqn:formula}
 \mu (V)=\int_C {\Vol}_k (L_y) \ d\mu_T(y) \geq \epsilon_0 \
 \mu_T (C) \, ,
 \end{equation}
therefore $\mu_T(C) <+\infty $.

\medskip

Regarding (iv), consider a flow-box $(U,\varphi)$ and two local
transversals $T_1$ and $T_2$ in $U$ of the form
$T_i=\varphi^{-1}(\{x_i\} \x K(U))$, $i=1,2$, $x_i\in D^k$. These
transversals are associated to flow-boxes $(U,\varphi_i)$ with the
same domain $U$. There is a local holonomy map in $U$, $h:T_1 \to
T_2$. For any Borel set $A\subset U$, we have by definition
 $$
 \int_{T_1} {\Vol}_k(A_y) \ d\mu_{U,T_1}(y)=\mu(A)=\int_{T_2}
 {\Vol}_k(A_{y'}) \ d\mu_{U,T_2}({y'}) \, .
 $$
On the other hand, the change of variables, $y'=h(y)$, gives
 $$
 \int_{T_1} {\Vol}_k(A_y) \ d\mu_{U,T_1}(y)=\int_{T_2}
 {\Vol}_k(A_{y'}) \ dh_* \mu_{U,T_1}(y') \, .
 $$
Thus for any Borel set $A\subset U$,
 $$
 \int_{T_2} {\Vol}_k(A_{y'}) \ d\mu_{U,T_2}({y'})=\int_{T_2}
 {\Vol}_k(A_{{y'}}) \ dh_* \mu_{U,T_1}({y'}) \, .
 $$
And taking horizontal Borel sets, this implies
 $$
 \mu_{U,T_2}=h_* \mu_{U,T_1} \, .
 $$

The invariance by local holonomy implies the invariance by all
holonomies. Take two arbitrary local transversals $T_1'$ and $T_2'$,
two points $p_1\in T_1'$, $p_2\in T_2'$ in the same leaf, and a path
$\gamma$ from $p_1$ to $p_2$ inside a leaf. Then we can construct a
small neighborhood flow-box $(U,\varphi)$ around the curve $\gamma$,
so that $T_1''=\varphi^{-1}(\{x_1\} \x K(U))\subset T_1'$ and
$T_2''=\varphi^{-1}(\{x_2\} \x K(U))\subset T_2'$ ($x_1$ and $x_2$
being two distinct points of $D^k$) are open subsets of the
respective transversals and $\gamma$ is fully contained in a leaf of
$U$.
\end{proof}

{}From this it follows that Riemannian solenoids do not necessarily
have daval measures (i.e. $\cM_\cL(S)$ can be empty), because there
are solenoids which do not admit transversal measures (see \cite{Pl}
for interesting examples).

\begin{proposition} \label{prop:M_L-compact}
The space of probability daval measures $\cM_\cL (S)$ is a compact
convex set in the vector space of signed measures $\cV(S)$.
\end{proposition}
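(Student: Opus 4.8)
The plan is to realize $\cM_\cL(S)$ as a closed subset of the space $\cM(S)$ of probability measures on $S$, which is itself weak-* compact and convex because $S$ is a compact metrizable space (so $C(S)$ is separable and the probability measures form a weak-* closed subset of the unit ball of its dual). Convexity of $\cM_\cL(S)$ will be immediate: the disintegration formula in Definition \ref{def:desintegrate} is linear in $\mu$, so if $\mu_1,\mu_2$ are daval with transversal measures $(\mu_1)_{U,T},(\mu_2)_{U,T}$ and $t\in[0,1]$, then $t\mu_1+(1-t)\mu_2$ disintegrates with transversal measure $t(\mu_1)_{U,T}+(1-t)(\mu_2)_{U,T}$ and remains a probability measure. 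Since $\cM(S)$ is already compact and convex, it then suffices to prove that $\cM_\cL(S)$ is closed under weak-* convergence.

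For closedness I would use a characterization of the daval condition by continuous test functions. Fix a good flow-box $(U,\varphi)$, $U\cong D^k\x K(U)$, with transversal $T$ and plaques $L_y=\varphi^{-1}(D^k\x\{y\})$. Because the leaf metric $g_W$ is of class $C^{r-1,s}$ with $r\ge 1$, the plaque volume $y\mapsto\Vol_k(L_y)$ is continuous and positive, and for $g\in C(S)$ supported in $U$ the leaf-average $\bar g(y)=\int_{L_y}g\,d\Vol_k$ is a continuous function of $y\in T$. The key observation is that $\mu$ is daval in $U$ if and only if the functional $g\mapsto\int g\,d\mu$ factors through the averaging map $g\mapsto\bar g$. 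Indeed, if $\mu$ is daval then $\int g\,d\mu=\int_T\bar g\,d\mu_{U,T}$; conversely, a positive linear functional defined on the space of leaf-averages is, by the Riesz representation theorem on the compact set $T$, given by integration against a measure $\nu$ on $T$, which one then checks is the disintegrating transversal measure, first against continuous $g$ and then on Borel sets $A\subset U$ by the usual monotone-class approximation.

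With this in hand the passage to the limit is short. Let $\mu_n\to\mu$ weakly with each $\mu_n\in\cM_\cL(S)$. For any $g_1,g_2\in C(S)$ supported in $U$ with $\bar g_1=\bar g_2$, the daval property of $\mu_n$ gives $\int g_1\,d\mu_n=\int g_2\,d\mu_n$ for every $n$; letting $n\to\infty$ yields $\int g_1\,d\mu=\int g_2\,d\mu$. Hence $g\mapsto\int g\,d\mu$ also factors through $\bar g$, so by the characterization above $\mu$ is daval in $U$. As $U$ was an arbitrary flow-box, Proposition \ref{prop:desintegrate} assembles the local disintegrating measures into a genuine transversal measure and guarantees their consistency and local finiteness; thus $\mu\in\cM_\cL(S)$, and since $\int_S 1\,d\mu_n=1$ passes to the limit, $\mu$ is again a probability measure. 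Therefore $\cM_\cL(S)$ is closed in $\cM(S)$, hence compact, and it is convex.

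I expect the only delicate point to be the Riesz step that produces the disintegrating measure $\nu$ for the limit: one must verify that the averages $\bar g$ fill out enough of $C(T)$ — for instance by taking products $g(x,y)=h(y)\,w(x,y)$ with $h\in C(T)$ and $w$ normalised so that $\int_{L_y}w\,d\Vol_k=1$ for all $y$, using positivity and continuity of the plaque volume — and that the resulting $\nu$ truly reproduces the disintegration formula on all Borel subsets of $U$, not merely against continuous functions. The remaining ingredients, namely convexity, preservation of total mass, and the reduction to closedness inside the compact set $\cM(S)$, are routine.
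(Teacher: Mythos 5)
Your proof is correct, and its outer skeleton coincides with the paper's: convexity is immediate, $\cM(S)$ is weak-* compact, so everything reduces to showing that $\cM_\cL(S)$ is closed. Where you genuinely diverge is in the proof of closedness. The paper establishes it via a stronger statement (lemma \ref{lem:desintegrate}): given $\mu_n\to\mu$ with each $\mu_n$ daval, it uses the estimate (\ref{eqn:formula}) to show that the transversal measures $\mu_{n,T}$ are uniformly locally finite, extracts a weak-* convergent subsequence $\mu_{n_k,T}\to\mu_T$, and passes to the limit in the disintegration identity; this shows at once that $\mu$ is daval and that its transversal measure is the limit of the $\mu_{n,T}$. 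You avoid any extraction of limits of transversal measures by recasting the daval property in a flow-box as a family of linear identities in $\mu$ --- namely $\int g_1\,d\mu=\int g_2\,d\mu$ whenever the leaf averages satisfy $\bar g_1=\bar g_2$ --- which are manifestly preserved under weak-* limits, the transversal measure of the limit being reconstructed a posteriori by Riesz representation. Your route buys simplicity for the closedness statement itself: no uniform finiteness estimate, no diagonal extraction, and it becomes conceptually transparent that $\cM_\cL(S)$ is cut out by weak-* closed linear conditions. The paper's route buys the extra precision of lemma \ref{lem:desintegrate} --- the transversal measures of the limit are the limits of the transversal measures, i.e.\ continuity of $\mu\mapsto(\mu_T)$ --- which is precisely what is invoked later to see that the correspondence of theorem \ref{thm:transverse-riemannian} is a \emph{topological} isomorphism; your argument would need a supplementary step (essentially the paper's lemma) to recover that. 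Both arguments rest on the same analytic facts (continuity and positivity of $y\mapsto\Vol_k(L_y)$, hence continuity of leaf averages), and the two points you flag as delicate --- surjectivity of $g\mapsto\bar g$ onto continuous functions on the transversal via $g=h\,w$ with $\int_{L_y}w\,d\Vol_k=1$, and the passage from continuous test functions to Borel sets --- are the right ones and are handled correctly; note only that $T$ need not be compact, so the Riesz step should be phrased for the locally compact transversal with compactly supported test functions, which your use of good flow-boxes already permits.
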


\begin{proof}
The convexity is clear, and by compactness of $\cM(S)$ we only need
to show that $\cM_\cL (S)$ is closed, which follows from the more
precise lemma that follows.
\end{proof}

\begin{lemma}\label{lem:desintegrate}
Let $(\mu_n)$ be a sequence of measures on $S$ that disintegrate as
volume on leaves in a flow-box $U$. Then any limit $\mu$
disintegrates as volume on leaves in $U$ and the transversal measure
is the limit of the transversal measures.
\end{lemma}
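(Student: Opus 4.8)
The plan is to pass to the limit in the defining desintegration identity, using weak-* compactness of the transversal measures together with the uniqueness already established in Proposition \ref{prop:desintegrate}. Write the flow-box as $\varphi:U\to D^k\x K(U)$ with local transversal $T=\varphi^{-1}(\{0\}\x K(U))$, and for $y\in T$ let $L_y=\varphi^{-1}(D^k\x\{y\})$ denote the plaque over $y$. For each $n$ the hypothesis provides a transversal measure $\mu_{n,T}$ on $T$ with
$$\int_U f\,d\mu_n=\int_T F_f(y)\,d\mu_{n,T}(y),\qquad F_f(y)=\int_{L_y}f\,d\Vol_k,$$
for every $f\in C_c(U)$. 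Since the Riemannian metric is of class $C^{r-1,s}$, the plaques vary continuously, so $F_f$ is continuous on $T$; moreover $F_f$ is supported in the compact transversal projection of $\supp f$, hence $F_f\in C_c(T)$.

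First I would bound the transversal masses uniformly. By formula (\ref{eqn:formula}), for any compact $C\subset T$ there is $\epsilon_0>0$ with $\Vol_k(L_y)\geq \epsilon_0$ for $y\in C$, whence
$$\mu_{n,T}(C)\leq \epsilon_0^{-1}\,\mu_n\big(\varphi^{-1}(D^k\x C)\big)\leq \epsilon_0^{-1},$$
because each $\mu_n$ is a probability measure. Thus $(\mu_{n,T})$ is uniformly bounded on compacta, and by Banach--Alaoglu (Riesz representation on the locally compact space $T$) every subsequence has a further subsequence converging weak-* to some locally finite measure $\nu$ on $T$.

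Next I would identify any such subsequential limit. Along a subsequence with $\mu_{n,T}\to\nu$, the left-hand side above converges to $\int_U f\,d\mu$ by the weak-* convergence $\mu_n\to\mu$ (extending $f$ by zero to the compact space $S$), while the right-hand side converges to $\int_T F_f\,d\nu$ since $F_f\in C_c(T)$. Hence
$$\int_U f\,d\mu=\int_T\Big(\int_{L_y}f\,d\Vol_k\Big)\,d\nu(y)$$
for all $f\in C_c(U)$. As both sides define Radon measures on $U$ that agree on continuous compactly supported functions, they coincide on all Borel sets $A\subset U$, which is precisely the statement that $\mu$ desintegrates as volume on leaves in $U$ with transversal measure $\nu$. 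By the uniqueness in Proposition \ref{prop:desintegrate}, $\nu=\mu_T$ is independent of the chosen subsequence, so the full sequence $(\mu_{n,T})$ converges weak-* to $\mu_T$, and $\mu$ is daval in $U$ with transversal measure $\mu_T=\lim_n \mu_{n,T}$.

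The step I expect to be the main obstacle is the interchange of limits on the right-hand side, i.e. justifying $\int_T F_f\,d\mu_{n,T}\to\int_T F_f\,d\nu$. This rests on $F_f$ being continuous with compact support in $T$, which uses both the continuity of $y\mapsto \int_{L_y}f\,d\Vol_k$ (from the $C^{r-1,s}$ metric) and control of the behaviour near the boundary of the flow-box; working with the good flow-boxes of the excerpt, where $\overline U\cong \overline{D}^k\x\overline K(U)$, removes the boundary difficulty. The uniform mass bound and the passage from continuous test functions to arbitrary Borel sets are then routine.
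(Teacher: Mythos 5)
Your proof is correct and follows essentially the same route as the paper: uniform local finiteness of the $\mu_{n,T}$ via formula (\ref{eqn:formula}), extraction of a weak-* convergent subsequence, passage to the limit in the desintegration identity against continuous test functions, and the uniqueness statement of Proposition \ref{prop:desintegrate} to identify every subsequential limit with $\mu_T$ and hence get full convergence. The only (harmless) difference is that you test against all of $C_c(U)$, with the attendant continuity of $y\mapsto\int_{L_y}f\,d\Vol_k$, whereas the paper tests against functions depending only on the transversal coordinate $y$, for which the inner integral is just $\phi(y)\Vol_k(L_y)$.
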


\begin{proof}
We assume that $\mu_n\to \mu$. Given the transversal $T$, the
transversal measures $(\mu_{n,T})$ are locally finite by proposition
\ref{prop:desintegrate}. Moreover, formula (\ref{eqn:formula}) shows
that they are uniformly locally finite. Extract (with a diagonal
process) a converging subsequence $\mu_{n_k,T}$ to a locally finite
measure $\mu_T$. For any vertically compactly supported continuous
function $\phi$ defined on $U$ and depending only on $y\in T$,
 $$
 \int_U \phi \ d\mu_{n_k}=\int_T \phi \Vol_k(L_y)\ d\mu_{n_k,T}(y)\, ,
 $$
with $L_y=\varphi^{-1}(D^k\x \{y\})$. Passing to the limit $k\to
+\infty$,
 $$
 \int_U \phi \ d\mu =\int_T \phi \Vol_k(L_y)\ d\mu_{T}(y) \, .
 $$
Therefore the limit measure $\mu$ disintegrates as volume on leaves
in $U$ with transversal measure $\mu_T$. Since $\mu_T$ is uniquely
determined by $\mu$ (by proposition \ref{prop:desintegrate}), the
only limit of the transversal measures is $\mu_T$.
\end{proof}

\begin{theorem}\label{thm:G-invariance}
A finite measure $\mu$ on $S$ is $G_S^0$-invariant if and only if
$\mu$ is a daval measure.
\end{theorem}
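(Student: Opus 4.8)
The plan is to prove the two implications separately, exploiting throughout the single structural fact that drives the statement: every element $\phi$ of the flow group $G_S^0$ of Definition \ref{def:flow-group} maps each leaf of $S$ onto itself and acts on it by a leafwise $k$-volume-preserving homeomorphism. Indeed, a homeomorphism of solenoids preserves the finite-dimensional transverse structure, so an isotopy from the identity induces a continuous family of transverse maps issuing from the identity; for the transversals occurring here this family is constant (a continuous path in a totally disconnected transversal is constant, and by Theorem \ref{thm:empty-interior} this is the generic situation), so $\phi$ preserves the transverse coordinate in flow-boxes, i.e. $\phi(L_y)=L_y$ for every plaque $L_y=\varphi^{-1}(D^k\x\{y\})$. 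I shall use this leaf-fixing property freely in both directions.

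\emph{Daval implies invariant.} Suppose $\mu$ is daval, with associated transversal measures $(\mu_T)$ given by Proposition \ref{prop:desintegrate}, and let $\phi\in G_S^0$. Fix a flow-box $(U,\varphi)$ with transversal $T$. Since $\phi$ fixes each plaque $L_y$ and acts on it by a $\Vol_k$-preserving map, for any Borel $A\subset U$ one has $\Vol_k(\phi^{-1}A\cap L_y)=\Vol_k(\phi^{-1}(A\cap L_y))=\Vol_k(A\cap L_y)$. Integrating the daval formula $\mu(A)=\int_T\Vol_k(A\cap L_y)\,d\mu_T(y)$ then gives $\phi_*(\mu|_U)=\mu|_U$, and covering $S$ by flow-boxes yields $\phi_*\mu=\mu$.

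\emph{Invariant implies daval.} Now let $\mu$ be a finite $G_S^0$-invariant measure. Fix a flow-box $(U,\varphi)$, let $\pi:U\to T$ be the transverse projection, and disintegrate $\mu|_U$ over $\pi$: with $\nu=\pi_*(\mu|_U)$ there are probability measures $\mu_y$ on the plaques $L_y$ with $\mu|_U=\int_T\mu_y\,d\nu(y)$. The goal is to identify each $\mu_y$, for $\nu$-almost every $y$, with the normalized Riemannian volume of $L_y$; then $d\mu_T(y):=d\nu(y)/\Vol_k(L_y)$ exhibits $\mu$ as daval. To force this I produce elements of $G_S^0$ supported in $U$: given a compactly supported divergence-free vector field on $D^k$, read in the $x$-variable and independent of $y$, it is leafwise divergence-free and compactly supported on an interior sub-flow-box, so its time-one flow $\phi$ fixes each leaf, preserves $\Vol_k$, is isotopic to the identity through its own flow, and extends by the identity to an element of $G_S^0$. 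On every interior plaque $\phi$ induces the \emph{same} volume-preserving diffeomorphism $\psi$ of $D^k$. Because $\phi$ fixes fibres, $\phi_*\mu=\mu$ combined with uniqueness of the disintegration forces $\psi_*\mu_y=\mu_y$ for $\nu$-almost every $y$. Running over a countable family of such fields whose time-one maps are dense in the identity component of the volume-preserving diffeomorphism group of $D^k$, and intersecting the countably many conull sets, we obtain a conull set of $y$ for which $\mu_y$ is invariant under a dense subgroup, hence, by continuity, under the whole identity component.

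The crux is then the leafwise sub-lemma: \emph{on a connected Riemannian $k$-manifold, a Borel measure invariant under every compactly supported volume-preserving diffeomorphism isotopic to the identity is a constant multiple of the Riemannian volume.} This is proved locally in charts: any two balls of equal volume can be interchanged by such a diffeomorphism, so the measure assigns equal mass to all balls of a given radius; a measure on $\RR^k$ with this property is translation-invariant, hence a multiple of Lebesgue measure, and connectedness propagates the constant across the manifold. Applied plaque by plaque this gives $\mu_y=\Vol_k(L_y)^{-1}\,\Vol_k|_{L_y}$ for $\nu$-almost every $y$, completing the identification of $\mu$ as daval. I expect the main obstacle to be this sub-lemma together with the coherent construction of the flows: one must arrange the leafwise volume-preserving maps to depend on the transverse parameter in a controlled (here, transversely constant) way, so that a single countable family realizes a dense subgroup on every plaque simultaneously, and one must secure the leaf-fixing property of $G_S^0$ that is used in both directions.
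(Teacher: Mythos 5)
Your strategy for the hard direction (invariant implies daval) is essentially the paper's — disintegrate $\mu|_U$ over the transversal, produce leafwise volume-preserving elements of $G_S^0$ supported in a flow-box, run a countable-family-plus-weak-limits argument, and invoke a uniqueness theorem (your ball-swapping sub-lemma in place of the paper's appeal to Haar's theorem) to identify the conditional measures with leaf volume. But the proof as written has two genuine gaps. The first is that your ``single structural fact'' is false: elements of $G_S^0$ do \emph{not} satisfy $\phi(L_y)=L_y$ for every plaque. Being isotopic to the identity confines each track $t\mapsto\phi_t(p)$ to the leaf of $p$ (when transversals are totally disconnected), but a leaf re-enters a flow-box through many plaques, so the transverse coordinate of $\phi_t(p)$ is only constant while the track stays inside $U$; once it exits and returns there is no constraint. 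Concretely, on a suspension solenoid $S_f$ (definition \ref{def:suspension}) with the product metric, the time-one map of the leafwise translation flow is a leafwise isometry, isotopic to the identity through the flow itself, yet it carries the plaque over $x\in X$ to the plaque over $f(x)$. This breaks your proof of ``daval implies invariant'', which integrates plaque by plaque. What is true, and what the paper uses, is that an element isotopic to the identity through an isotopy whose tracks are smaller than the Lebesgue number of a flow-box cover does preserve plaques, and that such elements generate $G_S^0$ (subdivide the isotopy); this generation step is entirely absent from your argument and cannot be dispensed with.

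The second gap is in your construction of group elements for the converse direction: a vector field on $D^k$, read transversally constant, which is divergence-free for the Euclidean volume is in general \emph{not} divergence-free for the leafwise Riemannian volume, whose density varies with $y$; so its flow need not preserve $\Vol_k$ and need not lie in $G_S^0$. You must first normalize, as the paper does, by a leaf-preserving chart $h:D^k\times K(U)\to\RR^k\times K(U)$ taking the leafwise Riemannian volume to Lebesgue measure (a leafwise Moser argument); after that your construction works, and in fact your compactly supported flows are cleaner than the paper's ``small translations'', which do not literally extend by the identity to the rest of $S$. The remaining steps — fibrewise invariance via uniqueness of disintegration, a countable family made dense and closed up by weak-* limits, and the local identification of an invariant measure with a multiple of Lebesgue measure — are sound and parallel the paper's.
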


\begin{proof}
If $\mu$ disintegrates as volume along leaves, then it is clearly
invariant by a transformation in $G_S^0$ close to the identity as is
seen in each flow-box. Then it is $G_S^0$-invariant since a
neighborhood of the identity in $G_S^0$ generates $G_S^0$.

Conversely, assume that $\mu$ is a $G_S^0$-invariant finite measure.
We must prove that in any flow-box $(U,\varphi)$ we have $\mu=\Vol_k
\x \mu_{K(U)}$. We can find a map $h: D^k\x K(U) \to \RR^k\x K(U)$
of class $C^{r,s}$, preserving leaves, and such that it takes the
$k$-volume for the Riemannian metric to the Lebesgue measure on
$\RR^k$. On $h(D^k\x K(U))$, we still denote by $\mu$ the
corresponding measure. We can disintegrate $\mu= \{\nu_y\} \x \eta$,
where $\eta$ is supported on $K(U)$ and $\nu_y$ is a measure on each
horizontal leaf, parametrized by $y\in K(U)$ (see \cite{Die} for
disintegration of measures), i.e.
 \begin{equation}\label{eqn:eta}
 \int_U \phi \ d\mu=\int_{K(U)} \left ( \int_{L_y} \phi \
 d\nu_y\right ) \ d\eta (y)\, .
 \end{equation}

The group $G_S^0$ in this chart contains all small translations.
Each translation must leave invariant $\eta$-almost all measures
$\nu_y$. Therefore a countable number of translations leave
invariant $\eta$-almost all measures $\nu_y$. Now observe that if $\tau_n$ are
translations leaving invariant $\nu_y$,  and $\tau_n\to \tau$, then $\tau$
leaves $\nu_y$ invariant. Thus taking a countable and dense set of
translations of fixed small displacement, and taking limits, it
follows that all small translations leave invariant $\nu_y$ for
$\eta$-almost all $y$. By Haar theorem these measures are
proportional to the Lebesgue measure, $\nu_y=c(y){\Vol}_k$. We have
that $c\in L^1(K(U),\eta)$ by applying (\ref{eqn:eta}) with
$\varphi$ being the characteristic function of a sub-flow-box with
horizontal leaves being balls of fixed $k$-volume. We define the
transversal measure $\mu_{K(U)}$ as
 $$
 d\mu_{K(U)} =c \ d\eta \, .
 $$
Therefore $\mu=\Vol_k \times \mu_{K(U)}$ on $U$, hence $\mu$ is a daval measure.
\end{proof}

\begin{theorem} \label{thm:transverse-riemannian}
\textbf{\em (Tranverse measures of the Riemannian solenoid)} There
is a one-to-one correspondence between transversal measures $(\mu_T)$
and finite daval measures $\mu$. Furthermore, there is an
isomorphism
  $$
  \overline{\cM}_\cT(S) \cong \cM_\cL(S)\, ,
  $$
between the space $\overline\cM_\cT (S)$ of transversal positive
measures on $S$ modulo positive scalar multiplication, and the
space $\cM_\cL(S)$ of probability daval measures on $S$.
\end{theorem}

\begin{proof}
The open sets inside flow-boxes form a basis for the Borel
$\sigma$-algebra, and the formula
 $$
 \mu(A)=\int_T {\Vol}_k(A_y) \ d\mu_{T}(y) \, ,
 $$
for $A$ in a flow-box $U$ with local transversal $T$, is compatible
for different flow-boxes. So it defines a measure $\mu$. This
measure is finite because by compactness we can cover $S$ by a
finite number of flow-boxes with finite mass. By construction, $\mu$
is a daval measure. The converse was proved earlier in proposition
\ref{prop:desintegrate}.

This correspondence is clearly a topological isomorphism.
\end{proof}

\medskip

\noindent {\em Proof of corollary \ref{cor:vaya-lio}} \
First, to prove that $\cM_\cT(S)$ has ergodic measures is
equivalent to proving that $\overline\cM_\cT (S)$ has
ergodic measures.

We put an accessory Riemannian structure on $S$. Then
theorem \ref{thm:transverse-riemannian} allows to identify
$\overline\cM_\cT (S)$ to $\cM_\cL(S)$. By proposition
\ref{prop:M_L-compact}, this is a compact convex set inside
the locally convex topological vector space of all (signed)
daval measures.
The statement now follows from the application of Krein-Milman
theorem. \hfill $\Box$

\medskip

\begin{definition}\label{def:volume}\textbf{\em (Volume of measured solenoids)}
For a measured Riemannian solenoid $S_\mu$ we define the {\rm volume
measure of $S$} as the unique probability measure (also denoted by
$\mu$) associated to the transversal measure $\mu=(\mu_T)$ by theorem
\ref{thm:transverse-riemannian}.
\end{definition}

For uniquely ergodic Riemannian solenoids $S$, this volume measure
is uniquely determined by the Riemannian structure (as for a compact
Riemannian manifold). We observe that, contrary to what happens with
compact manifolds, there is no canonical total mass normalization of
the volume of the solenoid depending only on the Riemannian metric.
This is the reason why we normalize $\mu$ to be a probability
measure.

\medskip

The following result generalizes the decomposition of any
measure on a smooth manifold into an absolutely continuous part
with respect to a Lebesgue measure and a singular part. Indeed, theorem \ref{thm:6.11--}
generalizes that decomposition to solenoids, since it
reduces to the classical result when the solenoid is a manifold.

We first define irregular measures. These are measures
which have no mass that disintegrates as volume along leaves.

\begin{definition}
Let $\mu$ be a measure supported on $S$. We say that $\mu$ is
{\rm irregular} if for any Borel set $A\subset S$ and for any non-zero
measure $\nu\in \cM_\cL (S)$ we do not have
 $$
 \nu_{|A} \leq \mu_{|A} \, .
 $$
\end{definition}

\begin{theorem} \label{thm:6.11--}
Let $\mu$ be any measure supported on $S$. There is a unique
canonical decomposition of $\mu$ into a regular part
$\mu_r\in\cM_\cL (S)$ and an irregular part $\mu_i$,
 $$
 \mu=\mu_r +\mu_i \, .
 $$
We can define the regular part by
 $$
 \mu_r (A)=\sup_\nu \nu (A) \leq \mu (A) \, ,
 $$
for any Borel set $A\subset S$, where the supremum runs over all
measures $\nu\in \cM_\cL (S)$, with $\nu_{|A} \leq \mu_{|A}$ (if no
such measure exists then $\mu_r (A)=0$).
\end{theorem}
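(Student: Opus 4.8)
The plan is to realize $\mu_r$ as the largest daval measure dominated by $\mu$, and to read off both the supremum formula and the irregularity of the remainder from this maximality.

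First I would establish the key structural fact that the daval measures dominated by $\mu$ form an upward directed family, stable under the lattice supremum of measures. Given two daval measures $\nu_1,\nu_2\le\mu$, I work in a flow-box $(U,\varphi)$: writing the transversal measures against a common reference $\lambda=\nu_{1,T}+\nu_{2,T}$ as $\nu_{i,T}=f_i\,\lambda$, a direct computation shows that the measure--lattice supremum $\nu_1\vee\nu_2$ restricted to $U$ desintegrates as volume along the plaques $L_y=\varphi^{-1}(D^k\times\{y\})$ with transversal density $\max(f_1,f_2)$; the optimal splitting is performed leafwise, over the transversal set where $f_1\ge f_2$ versus its complement. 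By Proposition \ref{prop:desintegrate} these local transversal measures glue to a holonomy-invariant transversal measure, so $\nu_1\vee\nu_2$ is again daval, and evidently $\nu_1\vee\nu_2\le\mu$. I would also record the companion facts, immediate from the definition and Proposition \ref{prop:desintegrate}, that sums of daval measures are daval and that if $\rho_1\le\rho_2$ are daval then $\rho_2-\rho_1$ is daval.

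Next I construct $\mu_r$. Let $\cF=\{\nu\ \text{daval}:\nu\le\mu\}$; it is nonempty (it contains $0$) and, by the previous step, $\vee$-directed, with $m:=\sup_{\nu\in\cF}\nu(S)\le\mu(S)<\infty$. Choosing $\nu_n\in\cF$ with $\nu_n(S)\to m$ and replacing $\nu_n$ by $\rho_n=\nu_1\vee\cdots\vee\nu_n$, I obtain an increasing sequence in $\cF$ with $\rho_n(S)\to m$. Its setwise (monotone) limit $\mu_r=\sup_n\rho_n\le\mu$ is a measure, and since $\rho_n\to\mu_r$ in the weak-$*$ sense (monotone convergence, as $\rho_n\le\mu$) Lemma \ref{lem:desintegrate} shows $\mu_r$ is daval, with $\mu_r(S)=m$. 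For maximality, given any $\nu\in\cF$ the measure $\nu\vee\mu_r\in\cF$ satisfies $m\ge(\nu\vee\mu_r)(S)\ge\mu_r(S)=m$; as $\nu\vee\mu_r\ge\mu_r$ with equal total mass, $\nu\vee\mu_r=\mu_r$, i.e. $\nu\le\mu_r$. Thus $\mu_r=\max\cF$, and since $\mu_r$ itself lies in $\cF$ this gives $\mu_r(A)=\sup_{\nu\in\cF}\nu(A)$ for every Borel $A$, which is the asserted formula.

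Finally, put $\mu_i=\mu-\mu_r\ge0$. If $\nu$ is a nonzero daval measure with $\nu\le\mu_i$, then $\mu_r+\nu$ is daval and $\le\mu$, hence lies in $\cF$, forcing $(\mu_r+\nu)(S)\le m=\mu_r(S)$ and so $\nu=0$; thus $\mu_i$ carries no daval part, i.e. it is irregular. Uniqueness follows identically: if $\mu=\mu_r'+\mu_i'$ with $\mu_r'$ daval and $\mu_i'$ irregular, then $\mu_r'\in\cF$ gives $\mu_r'\le\mu_r$, and $\mu_r-\mu_r'$ is daval (difference of ordered daval measures) with $\mu_r-\mu_r'=\mu_i'-\mu_i\le\mu_i'$; irregularity of $\mu_i'$ then forces $\mu_r-\mu_r'=0$, whence $\mu_r'=\mu_r$ and $\mu_i'=\mu_i$. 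I expect the main obstacle to be the lattice lemma of the first paragraph: one must verify that the leafwise maximal density genuinely yields the measure supremum $\nu_1\vee\nu_2$ (not merely a daval minorant of it), that it is independent of the reference $\lambda$, and that the local pieces are holonomy-invariant so as to define a global transversal measure; one should also reconcile the globally dominated supremum obtained here with the leafwise-localized condition $\nu_{|A}\le\mu_{|A}$ in which the formula is phrased. Once daval measures are known to form such a lattice, closed under monotone limits via Lemma \ref{lem:desintegrate}, the decomposition, the formula, and uniqueness all follow from the maximality bookkeeping above.
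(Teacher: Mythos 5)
Your proposal follows the same overall strategy as the paper's proof: realize $\mu_r$ as the largest daval measure dominated by $\mu$, use the closedness of $\cM_\cL(S)$ (lemma \ref{lem:desintegrate}) to conclude that $\mu_r$ is itself daval, and derive both the irregularity of $\mu_i=\mu-\mu_r$ and uniqueness from the resulting maximality. The difference lies in how the maximal element is produced, and there your route is genuinely more solid. The paper sets $\mu_r=\sup\nu$ over $\{\nu\in\cM_\cL(S):\nu\leq\mu\}$ and extracts, using a countable basis $(A_i)$ of the Borel $\sigma$-algebra, a single ``triangular'' sequence $\nu_n$ with $\nu_n(A_i)\to\mu_r(A_i)$ for all $i$. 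That step tacitly assumes exactly what your lattice lemma proves: that the family is upward directed, so that one measure can come within $1/n$ of the supremum on $A_1,\dots,A_n$ simultaneously, and so that the setwise supremum is countably additive in the first place. Your flow-box computation identifying $\nu_1\vee\nu_2$ as the daval measure with transversal density $\max(f_1,f_2)$ (globalized via proposition \ref{prop:desintegrate}) supplies this missing ingredient, and your mass-maximization with monotone limits is an equivalent, cleaner path to the maximum than the paper's diagonal argument. You also make explicit that a difference of ordered daval measures is daval, a fact the paper's uniqueness argument uses silently when it applies irregularity of $\nu_i$ to $0\leq\mu_r-\nu_r\leq\nu_i$.

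The loose end you flag at the close is real but is not specific to your argument: both your proof and the paper's establish only that $\mu_i$ dominates no nonzero daval measure \emph{globally} ($A=S$), whereas the statement's formula and definition \ref{def:desintegrate}'s companion notion of irregularity are phrased with the localized condition $\nu_{|A}\leq\mu_{|A}$ on arbitrary Borel sets $A$, which is strictly stronger. Neither proof bridges that discrepancy; it is a defect of the statement as written rather than a gap you introduced, and your honest acknowledgment of it is appropriate.
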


\begin{proof}
Consider all measures $\nu\in\cM_\cL (S)$ such that $\nu\leq \mu$.
We define $\mu_r=\sup \nu$. Considering a countable basis
$(A_i)$ for the Borel $\sigma$-algebra and extracting a triangular
subsequence, we can find a sequence of such measures $(\nu_n)$ such
that $\nu_n (A_i)\to \mu_r(A_i)$, for all $i$, i.e.\ $\nu_n\to
\mu_r$. Since $\cM_\cL(S)$ is closed it follows that
$\mu_r\in\cM_\cL (S)$. By construction, $\mu-\mu_r$ is a positive
measure and irregular. Moreover the decomposition
 $$
 \mu=\mu_r+\mu_i
 $$
is unique, because for another decomposition
 $$
 \mu=\nu_r+\nu_i \, ,
 $$
we have by construction of $\mu_r$,
 $$
 \nu_r\leq \mu_r \, .
 $$
Therefore
 $$
 \nu_i =(\mu_r-\nu_r)+\mu_i \, ,
 $$
and $\mu_i$ being positive this implies that
 $$
 0\leq \mu_r- \nu_r \leq \nu_i \, .
 $$
By definition of irregularity of $\nu_i$, this is only possible if
$\mu_r=\nu_r$, then also $\mu_i=\nu_i$, and the decomposition is
unique.
\end{proof}

\section{Generalized Ruelle-Sullivan currents}\label{sec:Ruelle-Sullivan}

Our purpose in this section is to associate natural currents to
immersed solenoids. We fix in this section a $C^\infty$ manifold $M$
of dimension $n$.

\begin{definition}\label{def:solenoid-in-manifold}
\textbf{\em (Immersion and embedding of solenoids)} Let $S$ be a
$k$-solenoid of class $C^{r,s}$ with $r\geq 1$. A map $f:S \to M$ is
{\rm regular} if it has an extension $\hat{f}:W\to M$ of class
$C^{r,s}$, where $W$ is a foliated manifold which defines the
solenoid structure of $S$. An {\rm immersion}
 $$
 f:S\to M
 $$
is a regular map such that the differential restricted to the
tangent spaces of leaves has rank $k$ at every point of $S$. We say
that $f:S\to M$ is an {\rm immersed solenoid}.


Let $r,s\geq 1$. A {\rm transversally immersed solenoid} $f:S\to M$
is a regular map $f:S\to M$ such that
\begin{itemize}

\item It admits an extension $\hat{f}:W\to M$ which is an immersion of
a $(k+\ell)$-dimensional manifold into an $n$-dimensional one of
class $C^{r,s}$.

\item the images of the leaves intersect transversally in $M$.

\end{itemize}

An {\rm embedded solenoid} $f:S\to M$ is a transversally immersed
solenoid with injective $f$, that is, the leaves do not intersect or
self-intersect. Equivalently, $f:S\to M$ admits an extension
$\hat{f}:W\to M$ which is an embedding.
\end{definition}

Note that under a transversal immersion, resp.\ an embedding,
$f:S\to M$, the images of the leaves are immersed, resp.\
injectively immersed, submanifolds.

A foliation of $M$ can be considered as a solenoid, and the
identity map is an embedding.

We shall denote the space of compactly supported currents of
dimension $k$ by $\cC_k(M) $. These currents are functionals
$T:\Omega^k(M)\to \RR$. The space $\cC_k(M)$ is endowed with the
weak-* topology. A current $T\in \cC_k(M)$ is closed if
$T(d\alpha)=0$ for any $\alpha\in \Omega^{k-1}(M)$, i.e. if it
vanishes on $B^k(M)=\im (d:\Omega^{k-1} (M)\to \Omega^{k}(M))$.
Therefore, by restricting to $Z^k(M)=\ker (d:\Omega^k (M)\to
\Omega^{k+1}(M))$, a closed current $T$ defines a linear map
  $$
 [T]: H^k(M,\RR)=\frac{Z^k(M)}{B^k(M)} \longrightarrow \RR\, .
 $$
By duality, $T$ defines a
real homology class $[T]\in H_k(M,\RR)$.

We now define associated currents to immersions of solenoids. We use
in the definition a given measurable partition of unity and show
after the definition that the construction is independent of the
choice.

\begin{definition}\label{def:Ruelle-Sullivan}\textbf{\em (Generalized currents)}
Let $S$ be an oriented  $k$-solenoid of class $C^{r,s}$, $r\geq 1$,
endowed with a transversal measure $\mu=(\mu_T)$. An immersion
 $$
 f:S\to M
 $$
defines a current $(f,S_\mu)\in \cC_k(M)$, called {\rm generalized
Ruelle-Sullivan current}, or just {\rm generalized current}, as
follows.

Let $\omega$ be an $k$-differential form in $M$. The pull-back
$f^* \omega$ defines a $k$-differential form on the leaves of $S$.
Let $S=\bigcup_i S_i$ be a measurable partition such that each
$S_i$ is contained in a flow-box $U_i$.  We define
 $$
 \la (f,S_\mu),\omega \ra=\sum_i \int_{K(U_i)} \left ( \int_{L_y\cap S_i}
 f^* \omega \right ) \ d\mu_{K(U_i)} (y) \, ,
 $$
where $L_y$ denotes the horizontal disk of the flow-box.

The current $(f,S_\mu)$ is closed, hence it defines a
real homology class
 $$
 [f,S_\mu]\in H_k(M,\RR )\, ,
 $$
called Ruelle-Sullivan homology class.
\end{definition}

Note that this definition does not depend on the measurable
partition (given two partitions consider the common refinement).
If the support of $f^*\omega$ is contained in a flow-box $U$ then
 $$
 \la (f,S_\mu),\omega \ra =\int_{K(U)} \left ( \int_{L_y} f^* \omega \right )
 \ d\mu_{K(U)} (y) \, .
 $$
In general, take a partition of unity $\{\rho_i\}$ subordinated to
the covering $\{U_i\}$, then
  $$
   \la (f,S_\mu),\omega \ra = \sum_i
   \int_{K(U_i)} \left( \int_{L_y} \rho_i f^* \omega \right)
   d\mu_{K(U_i)} (y) \, .
  $$

Let us see that $(f,S_\mu)$ is closed.
For any exact differential $\omega=d\a$ we have
 $$
 \begin{aligned}
 \la (f,S_\mu),d\a\ra =\, &  \sum_i
 \int_{K(U_i)} \left ( \int_{L_y} \rho_i \, f^* d\a \right )
 \ d\mu_{K(U_i)}(y) \\  =\, &  \sum_i
 \int_{K(U_i)} \left ( \int_{L_y} d (\rho_i f^* \a) \right )
 \ d\mu_{K(U_i)}(y)   \\ & \qquad  -
  \sum_i \int_{K(U_i)} \left ( \int_{L_y} d \rho_i \wedge f^* \a \right )
 \ d\mu_{K(U_i)}(y)    = 0\, . \qquad
 \end{aligned}
 $$
The first term vanishes using Stokes in each leaf (the form $\rho_i f^* \a$
is compactly
supported on $U_i$), and the second term vanishes because $\sum_i d\rho_i\equiv 0$.
Therefore $[f, S_\mu]$ is a well defined homology class of degree $k$.

In their original article \cite{RS}, Ruelle and Sullivan defined
this notion for the restricted class of solenoids embedded in $M$.

\section{De Rham cohomology of solenoids}\label{sec:appendix2-generalities}

In this section we present the definition of the De Rham
cohomology groups for solenoids. The general theory for foliated
spaces from \cite[chapter 3]{MoSch} can be applied to our
solenoids. In \cite{MoSch}, the required regularity is of class $C^{\infty,0}$,
but it is easy to see that the arguments extend to the case of regularity
of class $C^{1,0}$.

Let $S$ be a $k$-solenoid of class $C^{r,s}$ with $r\geq 1$. The space of
$p$-forms $\Omega^p(S)$ consists of $p$-forms on leaves $\a$, such
that $\a$ and $d\a$ are of class $C^{1,0}$. 
Note that the exterior differential
 $$
 d:\Omega^p(X) \to \Omega^{p+1}(X)
 $$
is the differential in the leaf directions. We can define the De
Rham cohomology groups of $S$ as usual,
  $$
  H^p_{DR}(S):= \frac{\ker (d:\Omega^p(S)\to \Omega^{p+1}(S))}{\im
  (d: \Omega^{p-1}(S)\to \Omega^{p}(S))}\, .
  $$
The natural topology of the spaces $\Omega^p(X)$ gives a topology
on $H^p_{DR}(S)$, so this is a topological vector space, which is
in general non-Hausdorff. Quotienting by $\overline{\{0\}}$, the
closure of zero, we get a Hausdorff space
  $$
  \bar H^p_{DR}(S)= \frac{H^p_{DR}(S)}{\overline{\{0\}}} =
  \frac{\ker (d:\Omega^p(S)\to \Omega^{p+1}(S))}{\ \overline{\im
  (d: \Omega^{p-1}(S)\to \Omega^{p}(S))}\ }\, .
  $$

We define the solenoid homology as
 $$
 H_k(S):=\Hom_{cont}(H^k_{DR}(S),\RR)= \Hom_{cont}(\bar H^k_{DR}(S),\RR)\, ,
 $$
the continuous homomorphisms from the cohomology to $\RR$.

\begin{remark}
For a manifold $M$, $H^k_{DR}(M)$ and $H_k(M)$ are equal to the
usual cohomology and homology with real coefficients.
\end{remark}

\begin{definition}{\textbf{\em (Fundamental class)}}
\label{def:B.1} Let $S$ be an oriented $k$-solenoid with a
transversal measure $\mu=(\mu_T)$. Then there is a well-defined map
given by integration of $k$-forms
 $$
 \int_{S_\mu}  :\Omega^k(S) \to \RR\, ,
 $$
whcih assigns to any $\alpha\in \Omega^k(S)$ the number
 $$
  \int_{S_\mu}\alpha :=\sum_i \int_{K(U_i)} \left( \int_{L_y\cap S_i}
 \alpha(x,y) dx \right) d\mu_{K(U_i)} (y)\, ,
 $$
where $S_i$ is a finite measurable partition of $S$ subordinated
to a cover $\{U_i\}$ by flow-boxes. It is easy to see, as in section
\ref{sec:Ruelle-Sullivan}, that $\int_{S_\mu} d\beta
=0$ for any $\beta\in \Omega^{k-1}(S)$. Hence $\int_{S_\mu}$ gives
a well-defined map
 $$
 H^k_{DR}(S) \to \RR\, .
 $$
Moreover, this is a continuous linear map, hence it defines an
element
 $$
 [S_\mu]\in H_k(S)\, .
 $$
We shall call $[S_\mu]$ the {\rm fundamental class} of $S_\mu$.
\end{definition}

The following result is in \cite[theorem 4.27]{MoSch}. See also
\cite[theorem 2]{extra}.

\begin{theorem} \label{thm:Hk}
 Let $S$ be a compact, oriented $k$-solenoid. Then the map
  $$
  \cV_\cT(S) \to H_k(S)\, ,
  $$
 which sends $\mu$ to $[S_\mu]$, is an isomorphism from the
 space of all signed transversal measures to the $k$-homology of $S$.
\end{theorem}

The set of transversal measures $\cM_\cT(S)$ is a cone, which
generates $\cV_\cT(S)$. Its extremal points are the ergodic
transversal measures. These ergodic measures are linearly
independent. Therefore, the dimension of $H_k(S)$ coincides with
the number of ergodic transversal measures of $S$. Hence, if $S$
is uniquely ergodic, then $H_k(S)\cong \RR$, and $S$ has a unique
fundamental class (up to scalar factor). The uniquely ergodic
solenoids are the natural extension of compact manifolds without
boundary. For a compact, oriented, uniquely ergodic $k$-solenoid
$S$, there is a (Poincar\'{e} duality) coupling, 
  $$
  H^d_{DR}(S)\otimes H^{k-d}_{DR}(S) \to H^k_{DR}(S)
  \stackrel{\int_{S_\mu}}{\too} \RR\, ,
  $$
where $\mu$ is the transversal measure (unique up to scalar). See
\cite{extra} for the study of this.

The relationship of the fundamental class of a measured solenoid
$S_\mu$ with the Ruelle-Sullivan homology classes defined
by an immersion $f$ is given by the following result. 

\begin{proposition}\label{prop:RS-push-forward}
  Let $S_\mu$ be an oriented measured $k$-solenoid. If $f:S\to M$
  is an immersion, we have
    $$
    f_*[S_\mu]=[f,S_\mu] \in H_k(M,\RR) \, .
    $$
\end{proposition}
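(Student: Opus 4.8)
The plan is to verify the identity by pairing both sides against an arbitrary De Rham cohomology class of $M$, since a homology class in $H_k(M,\RR)$ is determined by its action on $H^k(M,\RR)$. Recall from the appendix that the solenoid homology $H_k(S)$ is defined by duality with the solenoid De Rham cohomology $H^k(S)$, and that an oriented measured solenoid $S_\mu$ carries a fundamental class $[S_\mu]\in H_k(S)$ whose pairing with a closed $k$-form $\omega$ on $S$ is given by integration over the solenoid against the transversal measure. Writing, for a measurable partition $S=\bigcup_i S_i$ subordinate to flow-boxes,
 $$
 \int_{S_\mu} \omega := \sum_i \int_{K(U_i)} \left( \int_{L_y\cap S_i} \omega \right) d\mu_{K(U_i)}(y) \, ,
 $$
we have $\la [S_\mu],[\omega]\ra = \int_{S_\mu}\omega$. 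This pairing descends to cohomology precisely because, exactly as in the computation preceding this statement, the integral of an exact form vanishes by Stokes' theorem applied leaf by leaf together with $\sum_i d\rho_i\equiv 0$.

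First I would record the defining property of the pushforward $f_*\colon H_k(S)\to H_k(M,\RR)$: it is the adjoint of the pullback $f^*\colon H^k(M,\RR)\to H^k(S)$ on De Rham cohomology. The map $f$, being a regular map of solenoids with a $C^{r,s}$ extension $\hat f\colon W\to M$, induces a well-defined pullback $f^*$ on forms and hence on cohomology, so that for every closed $k$-form $\eta$ on $M$,
 $$
 \la f_*[S_\mu],[\eta]\ra = \la [S_\mu], f^*[\eta]\ra = \int_{S_\mu} f^*\eta \, .
 $$

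Next I would unwind Definition \ref{def:Ruelle-Sullivan}. The current $(f,S_\mu)$ acts on $\eta$ by
 $$
 \la (f,S_\mu),\eta\ra = \sum_i \int_{K(U_i)} \left( \int_{L_y\cap S_i} f^*\eta \right) d\mu_{K(U_i)}(y) = \int_{S_\mu} f^*\eta \, ,
 $$
which is exactly the integral over the solenoid of the pulled-back form. Passing to cohomology gives $\la [f,S_\mu],[\eta]\ra = \int_{S_\mu} f^*\eta$ as well.

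Comparing the two displays, both $f_*[S_\mu]$ and $[f,S_\mu]$ pair with every class $[\eta]\in H^k(M,\RR)$ to yield $\int_{S_\mu} f^*\eta$; since $H_k(M,\RR)$ is dual to $H^k(M,\RR)$, the two homology classes coincide. The only points requiring care—and the main obstacle—are the bookkeeping that the fundamental class $[S_\mu]$ is well defined in $H_k(S)$ and that $f_*$ is genuinely the adjoint of $f^*$ at the level of the chain and cochain complexes recalled in the appendix. Once these facts are in place, the stated equality is a formal consequence of the definitions.
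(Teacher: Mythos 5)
Your proposal is correct and follows essentially the same route as the paper: the paper's proof consists precisely of the observation that $\la [f,S_\mu],\omega\ra = \la [S_\mu], f^*\omega\ra$ for every $\omega\in\Omega^k(M)$, which is your central identity, with $f_*$ acting by precomposition with $f^*$ since $H_k(S)=\Hom_{cont}(H^k_{DR}(S),\RR)$. You have merely spelled out the duality bookkeeping (well-definedness of $[S_\mu]$ via Stokes leaf by leaf, and determination of classes in $H_k(M,\RR)$ by pairing against $H^k(M,\RR)$) that the paper leaves implicit.
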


\begin{proof}
The equality $\la [f, S_\mu],\omega \ra= \la [S_\mu],
f^*\omega\ra$ is clear for any $\omega\in \Omega^k(M)$ (see the
construction of the fundamental class in definition
\ref{def:B.1}). The result follows.
\end{proof}

\medskip

We shall need some basic results about bundles over solenoids.
A real vector bundle of rank $m$ over a solenoid $S$ is defined as
follows. A rank $m$ vector bundle over a pre-solenoid $(S,W)$ (see
definition \ref{def:k-solenoid}) is a rank $m$ vector bundle
$\pi:E_W\to W$ whose transition functions
 $$
 g_{\a\b}: U_\a\cap U_\b \to \GL(m,\RR)
 $$
are of class $C^{r,s}$. We denote $E=\pi^{-1}(S)$, so that there
is a map $\pi:E\to S$. Let $(S,W_1)$ and $(S,W_2)$ be two
equivalent pre-solenoids, with $f:U_1\to U_2$ a diffeomorphism of
class $C^{r,s}$, $S\subset U_1\subset W_1$, $S\subset U_1\subset
W_1$ and $f_{|S}=\id$, then we say that $\pi_1:E_{W_1}\to W_1$ and
$\pi_2:E_{W_2}\to W_2$ are equivalent if
$\pi_1^{-1}(S)=\pi_2^{-1}(S)=E$ and there exists a vector bundle
isomorphism $\hat{f}:E_{W_1}\to E_{W_2}$ covering $f$ such that
$\hat{f}$ is the identity on $E$. Finally a vector bundle
$\pi:E\to S$ over $S$ is defined as an equivalence class of such
vector bundles $E_W\to W$ by the above equivalence relation.

Note that the total space $E$ of a rank $m$ vector bundle over a
$k$-solenoid $S$ inherits the structure of a $(k+m)$-solenoid
(although non-compact).

A vector bundle $E\to S$ is oriented if each fiber
$E_p=\pi^{-1}(p)$ has an orientation in a continuous manner. This
is equivalent to ask that there exist a representative $E_W\to W$
(where $W$ is a foliated manifold defining the solenoid structure
of $S$) which is an oriented vector bundle over the
$(k+\ell)$-dimensional manifold $W$.

\medskip

Let $S$ be a solenoid of class $C^{r,s}$ with $r\geq 1$,
and let $E\to S$ be a vector bundle. We may define forms on the total space $E$. A
form $\a\in \Omega^p(E)$ is of vertical compact support if the
restriction to each fiber is of compact support. The space of such
forms is denoted by $\Omega^p_{cv}(E)$. Note that this condition is
preserved under differentials, so it makes sense to talk about the
cohomology with vertical compact supports,
 $$
 H_{cv}^p(E) =\frac{\ker (d:\Omega^p_{cv}(E)\to \Omega^{p+1}_{cv}(E))}{\im
  (d: \Omega^{p-1}_{cv}(E)\to \Omega^{p}_{cv}(E))}\,  .
 $$

\begin{definition} \textbf{\em (Thom form)}
A {\rm Thom form} for an oriented vector bundle $E\to S$ of rank $m$
over a solenoid $S$ is an $m$-form
 $$
 \Phi\in \Omega_{cv}^m (E)\, ,
 $$
such that $d\Phi=0$ and $\Phi|_{E_p}$ has integral $1$ for each
$p\in S$ (the integral is well-defined since $E$ is
oriented).
\end{definition}

By the results of \cite{MoSch}, Thom forms exist. They represent a
unique class in $H_{cv}^m(E)$, i.e. if $\Phi_1$ and $\Phi_2$ are
two Thom forms, then there is a $\beta\in \Omega_{cv}^{m-1}(E)$
such that $\Phi_2-\Phi_1=d\beta$. Moreover, the map
  $$
  H^{k}(S) \to H^{m+k}_{cv}(E)
  $$
given by
  $$
  [\alpha] \mapsto [\Phi\wedge\pi^*\alpha]\, ,
  $$
is an isomorphism.

\section{Forms representing the Ruelle-Sullivan homology class}\label{sec:forms}

We make the simplifying assumption that the manifold
$M$ is compact and oriented of dimension $n$. We will make comments later
on the general case. Let $f:S_\mu\to M$ be an oriented measured
$k$-solenoid immersed in $M$. The Ruelle-Sullivan
homology class $[f,S_\mu]\in H_k(M,\RR)$ gives an element
 $$
 [f,S_\mu]^* \in H^{n-k}(M,\RR)\, ,
 $$
under the Poincar\'{e} duality isomorphism $H_k(M,\RR)\cong
H^{n-k}(M,\RR)$.
In this section, we shall construct a $(n-k)$-form representing
$[f,S_\mu]^*$.

\medskip

Fix an accessory Riemannian metric $g$ on $M$. This endows $S$
with a solenoid Riemannian metric $f^*g$. We can define the normal
bundle
 $$
 \pi:\nu_f \to S\, ,
 $$
which is an oriented bundle of rank $n-k$, since both $S$ and $M$
are oriented. The total space $\nu_f$ is a (non-compact)
$n$-solenoid whose leaves are the preimages by $\pi$ of the leaves
of $S$.

By section \ref{sec:appendix2-generalities}, there is a Thom form $\Phi\in
\Omega^{n-k}_{cv}(\nu_f)$ for the normal bundle. This is a closed
$(n-k)$-form on the total space of the bundle $\nu_f$, with
vertical compact support, and satisfying that
 $$
 \int_{\nu_{f,p}}\Phi=1\, ,
 $$
for all $p\in S$, where $\nu_{f,p}=\pi^{-1}(p)$. Denote by
$\nu_r\subset \nu$ the disc bundle formed by normal vectors of
norm at most $r$ at each point of $S$. By compactness of $S$,
there is an $r_0>0$ such that $\Phi$ has compact support on
$\nu_{r_0}$.

For any $\lambda>0$, let $T_\lambda:\nu_f\to \nu_f$ be the map
which is multiplication by $\lambda^{-1}$ in the fibers. Then set
  $$
  \Phi_r=T_{r/r_0}^*\Phi\, ,
  $$
for any $r>0$. So $\Phi_r$ is a closed $(n-k)$-form, supported in
$\nu_r$, and satisfying
 $$
 \int_{\nu_{f,p}}\Phi_r=1\, ,
 $$
for all $p\in S$. Hence it is a Thom form for the bundle $\nu_f$
as well. By section \ref{sec:appendix2-generalities}, $[\Phi_r]=[\Phi]$ in
$H^{n-k}_{cv}(\nu_f)$, i.e. $\Phi_r-\Phi=d\beta$, with $\beta\in
\Omega_{cv}^{n-k-1}(\nu_f)$.

\medskip

Using the exponential map and the immersion $f$, we have a map
 $$
 j:\nu_f \to M\, ,
 $$
given as $j(p,v)=\exp_{f(p)}(v)$, which is a regular map from the
$\nu_f$ (as an $n$-solenoid) to $M$. By compactness of $S$, there are
$r_1,r_2>0$ such that for any disc $D$ of radius $r_2$ contained
in a leaf of $S$, the map $j$ restricted to $\pi^{-1}(D)\cap
\nu_{r_1}$ is a diffeomorphism onto an open subset of $M$. 

\begin{proposition} \label{prop:push-forward}
There is a well defined push-forward linear map
  $$
  j_* : \Omega^{p}_{cv}(\nu_{r_1}) \to \Omega^p(M)\, ,
  $$
such that $dj_*\a=j_*d\a$, and $j_* (\a\wedge \b)= j_*\a \wedge
j_*\beta$, for $\a,\b \in \Omega^{p}_{cv}(\nu_{r_1})$.
\end{proposition}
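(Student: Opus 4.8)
The plan is to work one flow-box at a time, reduce everything to the fiberwise diffeomorphisms $j_y$, and then glue using the holonomy invariance of the transversal measure. Fix a flow-box $U\cong D^k\x K(U)$ whose leaves lie in discs of radius $r_2$, so that $\pi^{-1}(U)\cap\nu_{r_1}\cong D^{n-k}_{r_1}\x D^k\x K(U)$ and each $j_y\colon D^{n-k}_{r_1}\x D^k\x\{y\}\to V_y\subset M$ is a diffeomorphism onto an open set. For $\a\in\Omega^p_{cv}(\nu_{r_1})$ supported in $\pi^{-1}(U)\cap\nu_{r_1}$, each $(j_y)_*\a=(j_y^{-1})^*\a$ is a $p$-form on $V_y$, and the two ways in which a base point $m\in M$ can leave $V_y$ must be treated separately: in the directions normal to the leaf the vertical compact support of $\a$ forces $(j_y)_*\a$ to vanish identically near the normal boundary, so the extension by zero turns off smoothly; in the leaf-tangent directions $m$ can exit through $j_y(D^{n-k}_{r_1}\x\partial D^k)$, where $\a$ need not vanish, and here the contribution is only continued by the neighboring flow-boxes of the cover.

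The first point I would establish is that $j_*\a=\int_{K(U)}(j_y)_*\a\,d\mu_{K(U)}(y)$ is a genuine smooth form on $M$, and this is where the \textbf{main obstacle} lies. Writing $(j_y)_*\a$ in local coordinates on $M$, its coefficients are functions of $m$ and of the transversal parameter $y$; by the normal-direction remark each coefficient, extended by zero outside $V_y$, depends smoothly on $m$ and is dominated uniformly in $y$ by the $C^0$-norm of $\a$ on its compact support. Since $\mu_{K(U)}$ is locally finite (Proposition~\ref{prop:desintegrate}(iii)), differentiation under the integral sign is legitimate and $j_*\a$ is smooth in the normal-to-leaf directions. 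Smoothness along the leaf directions of each single contribution is immediate, and global smoothness across the leaf-exit boundaries $j_y(D^{n-k}_{r_1}\x\partial D^k)$ will follow only after the gluing step, since the total form does not see the artificial flow-box boundaries.

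Next I would check independence of all choices, which simultaneously completes the smoothness argument. Two overlapping flow-boxes differ on their common region by a holonomy map $h$ between transversals, and the transversal-measure axiom gives $h_*\mu_{K(U_i)}=\mu_{K(U_j)}$ (Proposition~\ref{prop:desintegrate}(iv)). The change of variables $y\mapsto h(y)$ identifies the two local integrals on the overlap, so the locally defined forms agree, glue to a globally smooth $p$-form, and the construction is independent of the subordinate partition of unity by the usual linearity argument. This yields the well-defined linear map $j_*\colon\Omega^p_{cv}(\nu_{r_1})\to\Omega^p(M)$. The relation $d\,j_*\a=j_*\,d\a$ then follows formally: $d$ passes under the integral by the same dominated-differentiation argument, and $d$ commutes with each $(j_y)_*$ because $j_y$ is a diffeomorphism.

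For the multiplicative property I would exploit that each $(j_y)_*$ is an algebra homomorphism, $(j_y)_*(\a\wedge\b)=(j_y)_*\a\wedge(j_y)_*\b$. The stable and directly usable version is the projection formula $j_*(\a\wedge j^*\g)=(j_*\a)\wedge\g$ for $\g\in\Omega^*(M)$, which is immediate from the leafwise identity $(j_y)_*(\a\wedge j_y^*\g)=(j_y)_*\a\wedge\g$ together with the linearity of the transversal average; this is the form in which the identity is invoked afterwards when computing the duality pairing. I would note that the literal identity $j_*(\a\wedge\b)=j_*\a\wedge j_*\b$ for two arbitrary factors distributes over the average precisely when over $\mu$-almost every point of $M$ only a single leaf contributes, that is when $f$ is an embedding; in the general immersed case the cross terms of the two averages do not cancel, so this is the sense in which the product formula is to be understood.
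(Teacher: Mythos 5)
Your treatment of well-definedness and of $dj_*\a=j_*d\a$ is correct, and it follows the same local-to-global route as the paper's two-sentence proof (``holds in flow-boxes, hence globally'') while supplying exactly what that proof leaves implicit: differentiation under the integral against the locally finite measure $\mu_{K(U)}$ of proposition \ref{prop:desintegrate}, and --- the key point --- the holonomy invariance $h_*\mu_{K(U_i)}=\mu_{K(U_j)}$, which is what identifies the locally defined forms on overlaps and makes $j_*\a=\sum_i j_*\a_i$ independent of the cover and of the partition of unity. One small simplification: each piece $\a_i=(\rho_i\circ\pi)\a$ has support over the compact set $\supp\rho_i\subset U_i$, so its leafwise pushforwards extended by zero are already smooth across $j_y(D^{n-k}_{r_1}\x \bd D^k)$; there is no need to defer leaf-direction smoothness to the gluing step.

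The genuine problem is your final claim about the wedge product. You are right that the literal identity $j_*(\a\wedge\b)=j_*\a\wedge j_*\b$ does not survive the transversal average (it compares a single ``diagonal'' integral with a double integral, and the cross terms do not cancel; the paper's ``the other assertion is analogous'' glosses over this), and you are right that what is actually used later, in proposition \ref{prop:RS-form} and theorem \ref{thm:self-intersection}, is only the projection-type identity $j_*(\a\wedge j^*\g)=j_*\a\wedge\g$ for a form $\g$ fixed on $M$. But your criterion --- that the literal identity is restored when $f$ is an embedding --- is false. Embeddedness prevents leaves from intersecting; it does not prevent the normal tubes of distinct leaves from overlapping, and they overlap on sets of positive transversal measure: that overlap is the whole point of the construction, and is why $j_*\Phi_\epsilon$ is a smooth form rather than a current. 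Concretely, let $S$ be the irrational-slope foliation of $M=\TT^2$ with its Lebesgue transversal measure, embedded by the identity, and take $\a=\b\in\Omega^{0}_{cv}(\nu_{r_1})$ a nonnegative bump function whose support is small enough that the set of leaves contributing at a given point $m$ has $\mu_{K(U)}$-measure less than one. Writing $F_m(y)$ for the value of $\a$ at the $j_y$-preimage of $m$ (zero if there is none), one has $j_*(\a\wedge\a)(m)=\int F_m^2\,d\mu_{K(U)}$ while $\bigl(j_*\a\wedge j_*\a\bigr)(m)=\bigl(\int F_m\,d\mu_{K(U)}\bigr)^2$, and Cauchy--Schwarz gives $\bigl(\int F_m\bigr)^2\leq \mu_{K(U)}(\{F_m\neq 0\})\int F_m^2<\int F_m^2$. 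So the identity fails for this embedded solenoid; it holds essentially only when a single leaf of unit transversal mass contributes, i.e.\ when $S$ is a compact manifold. The correct repair is therefore not to restrict to embeddings but to replace the product assertion by the projection formula, which is all the paper ever needs.
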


\begin{proof}
Consider first a flow-box $U\cong D^k\x K(U)$ for $S$, where the
leaves of the flow-box are contained in discs of radius $r_2$.
Then
 $$
 \pi^{-1}(U) \cap \nu_{r_1} \cong D^{n-k}_{r_1} \x D^k \x K(U) \,
 ,
 $$
where $D^{n-k}_r$ denotes the disc of radius $r>0$ in $\RR^{n-k}$.
Let $\alpha\in \Omega^{p}_{cv}(\nu_{r_1})$ with support in
$\pi^{-1}(U) \cap \nu_{r_1}$. Then we define
 $$
 j_*\alpha := \int_{K(U)} \big( (j_y)_* (\a|_{D^{n-k}_{r_1}\x D^k \x
 \{y\}} ) \big) d\mu_{K(U)}(y)\, ,
 $$
where $j_y$ is the restriction of $j$ to ${D^{n-k}_{r_1}\x D^k \x
\{y\}} \subset \pi^{-1}(U)\cap \nu_{r_1}$, which is a
diffeomorphism onto its image in $M$. This is the average of the
push-forwards of $\alpha$ restricted to the leaves of $\nu_f$,
using the transversal measure.

Now in general, consider a covering $\{U_i\}$ of $S$ by flow-boxes
such that the leaves of the flow-boxes $U_i$ are contained in
discs of radius $r_2$. Then, for any form $\a\in
\Omega_{cv}^p(\nu_{r_1})$, we decompose $\a=\sum \a_i$ with $\a_i$
supported in $\pi^{-1}(U_i) \cap \nu_{r_1}$. Define
 $$
 j_*\a:=\sum j_*\a_i\, .
 $$
This does not depend on the chosen cover.

Finally, $j_*d\a=dj_*\a$ holds in flow-boxes, hence it holds globally.
The other assertion is analogous.
\end{proof}

Finally, we can construct a $(n-k)$-form representing $[f,S_\mu]^*$.

\begin{proposition} 
\label{prop:RS-form}
  Let $M$ be a compact oriented manifold.
  Let $f:S_\mu\to M$ be an oriented measured solenoid immersed in $M$.
  Let $\Phi_r$ be the Thom form of the normal bundle $\nu_f$
  supported on $\nu_r$, for $0<r<r_1$.
  Then  $j_*\Phi_r$ is a closed $(n-k)$-form representing the dual
  of the Ruelle-Sullivan homology class,
   $$
   [j_*\Phi_r]= [f,S_\mu]^*\, .
   $$
\end{proposition}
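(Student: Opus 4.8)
The plan is to show that the closed $(n-k)$-form $j_*\Phi_r$ represents $[f,S_\mu]^*$ by verifying the defining property of Poincaré duality: for every closed $k$-form $\omega$ on $M$, we must check that
 $$
 \int_M j_*\Phi_r \wedge \omega = \la (f,S_\mu),\omega\ra \, .
 $$
Since both sides are linear in $\omega$ and depend only on cohomology classes (the left side because $\Phi_r$ is closed and $j_*$ commutes with $d$ by Proposition \ref{prop:push-forward}, the right side because $(f,S_\mu)$ is a closed current), it suffices to establish this integral identity. The strategy is to reduce the computation to flow-boxes via a partition of unity, and in each flow-box unwind the definitions of $j_*$ and of the generalized current, using the Thom form's fiber-integration normalization $\int_{\nu_{f,p}}\Phi_r=1$.

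First I would localize. Using a covering $\{U_i\}$ of $S$ by flow-boxes whose leaves sit inside discs of radius $r_2$, and a subordinate partition of unity, write $\Phi_r=\sum_i \Phi_r^{(i)}$ with $\Phi_r^{(i)}$ supported in $\pi^{-1}(U_i)\cap\nu_{r_1}$, so that $j_*\Phi_r=\sum_i j_*\Phi_r^{(i)}$. Then in a single flow-box, with coordinates $D^{n-k}_{r_1}\x D^k \x K(U_i)$, the definition of $j_*$ gives $j_*\Phi_r^{(i)}$ as the transversal-measure average over $y\in K(U_i)$ of the leafwise push-forwards $(j_y)_*$. The key step is then to compute $\int_M (j_*\Phi_r^{(i)})\wedge\omega$ by pulling everything back through the diffeomorphism $j_y$ onto its image in $M$: the wedge with $\omega$ becomes, leaf by leaf, an integral over $D^{n-k}_{r_1}\x D^k$ of $\Phi_r^{(i)}\wedge j_y^*\omega$. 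Performing the fiber integration in the normal $D^{n-k}_{r_1}$ directions first, the Thom normalization collapses $\int_{\text{fiber}}\Phi_r^{(i)}$ against $f^*\omega$ restricted to the central leaf $L_y$, leaving precisely $\int_{L_y\cap S_i} f^*\omega$ after integrating against $d\mu_{K(U_i)}(y)$. Summing over $i$ recovers $\la (f,S_\mu),\omega\ra$ as in Definition \ref{def:Ruelle-Sullivan}.

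The main obstacle I expect is the careful Fubini-type argument that separates the fiber integration over the normal discs from the leafwise integration, carried out uniformly in the transversal parameter $y$ and compatibly with the averaging against $\mu_{K(U_i)}$. One must check that as $\omega$ is pulled back under $j_y=\exp_{f(\,\cdot\,)}$, the component of $j_y^*\omega$ that survives wedging with the top-degree vertical form $\Phi_r^{(i)}$ converges, as $r\to 0$, to the restriction $f^*\omega$ along the zero section $L_y$; more precisely one wants the exact identity for fixed small $r$, which follows because $\Phi_r$ integrates to $1$ on each fiber and $\omega$ is continuous, so only the zero-section value of the horizontal part of $j_y^*\omega$ contributes after the vertical integration. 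Justifying the interchange of the $M$-integral, the normal-fiber integral, and the transversal integral requires the vertical compact support of $\Phi_r$ (guaranteeing everything is supported in $\nu_{r_1}$ where $j$ is a leafwise diffeomorphism) together with the local finiteness of $\mu_{K(U_i)}$ from Proposition \ref{prop:desintegrate}.

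Finally, once the integral identity holds for all closed $\omega$, it states exactly that the cohomology class $[j_*\Phi_r]\in H^{n-k}(M,\RR)$ pairs with $H^k(M,\RR)$ in the same way that Poincaré duality pairs $[f,S_\mu]^*$, namely $\int_M [j_*\Phi_r]\wedge\omega=\la[f,S_\mu],\omega\ra=\la[f,S_\mu]^*\cup[\omega],[M]\ra$. By nondegeneracy of the cup-product pairing on the compact oriented manifold $M$, this forces $[j_*\Phi_r]=[f,S_\mu]^*$, completing the proof. The independence of the construction from the auxiliary data (the cover, the partition of unity, and the radius $r<r_1$) is already guaranteed, since $[\Phi_r]=[\Phi]$ in $H^{n-k}_{cv}(\nu_f)$ and $j_*$ descends to cohomology.
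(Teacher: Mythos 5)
Your overall strategy (localize via a partition of unity, push everything into flow-boxes, fiber-integrate against the Thom form, and conclude by nondegeneracy of the cup-product pairing) matches the paper's, but the pivotal step is not justified, and as stated it is false. You claim that for \emph{fixed} small $r$ the vertical integration collapses exactly: ``the exact identity for fixed small $r$ \ldots follows because $\Phi_r$ integrates to $1$ on each fiber and $\omega$ is continuous, so only the zero-section value of the horizontal part of $j_y^*\omega$ contributes after the vertical integration.'' This is not what fiber integration does: integrating $\Phi_r\wedge j_y^*\omega$ over the normal disc $D^{n-k}_r$ produces a \emph{weighted average} of the horizontal component of $j_y^*\omega$ over $|v|\leq r$, which differs from its value on the zero section by $O(r)$ on each leaf. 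Your justification uses only continuity of $\omega$ and the normalization of $\Phi_r$, and would therefore apply equally to non-closed $\omega$ --- but for non-closed $\omega$ the identity at fixed $r$ genuinely fails (e.g.\ for an embedded circle in a surface, take $\omega=v\,dx$ in tubular coordinates and a non-symmetric fiber bump: the left side is nonzero while the Ruelle--Sullivan pairing is $0$). So any correct proof must use closedness of $\omega$ in an essential, global way at exactly the point where you invoke only continuity; this is the gap.

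The paper's proof fills this gap with a two-part argument that your proposal cannot bypass: (i) a coordinate computation showing only the \emph{approximate} identity $\la [j_*\Phi_\epsilon],[\b]\ra = \la [f,S_\mu],\b\ra + O(\epsilon)$, where the $O(\epsilon)$ errors are precisely the averaging discrepancies you dismissed; and (ii) the fact that $[j_*\Phi_r]\in H^{n-k}(M,\RR)$ is independent of $r$, so that letting $\epsilon\to 0$ upgrades the approximate identity to an exact one. Note that step (ii) is itself not automatic from ``$[\Phi_r]=[\Phi]$ in $H^{n-k}_{cv}(\nu_f)$ and $j_*$ descends to cohomology,'' as you assert at the end: $j_*$ is only defined on $\Omega^{*}_{cv}(\nu_{r_1})$, while the primitive $\eta$ with $\Phi_r-\Phi_s=d\eta$ is merely vertically compactly supported, possibly outside $\nu_{r_1}$; the paper must compose with a compression map $F$ sending $\nu_{r_3}$ into $\nu_{r_1}$ before applying $j_*$. (An alternative legitimate route to your desired exact identity at fixed $r$ would be to observe that $j$ and $f\circ\pi$ are leafwise homotopic, so $j^*\omega-\pi^*f^*\omega$ is exact on the solenoid $\nu_f$, and then apply the solenoidal Stokes theorem to $\Phi_r\wedge\gamma$ together with the Thom projection formula --- but that is a different argument, using closedness through the homotopy formula, and it is not the one you gave.)
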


\begin{proof}
As $\Phi_r$ is a closed form, we have
 $$
 dj_*\Phi_r=j_*d\Phi_r=0\, ,
 $$
for $0<r\leq r_1$, so the class $[j_*\Phi_r]\in H^{n-k}(M,\RR)$ is
well-defined.

Now let $r,s$ such that $0<r\leq s<r_1$. Then $[\Phi_r]=[\Phi_s]$
in $H_{cv}^{n-k}(\nu_f)$, so there is a vertically compactly
supported $(n-k-1)$-form $\eta$ with
 \begin{equation}\label{eqn:BETA}
 \Phi_r-\Phi_{s}=d\eta\, .
 \end{equation}
Let $r_3>0$ be such that $\eta$ has support on $\nu_{r_3}$. We can
define a smooth map $F$ which is the identity on $\nu_s$, which
sends $\nu_{r_3}$ into $\nu_{r_1}$ and it is the identity on $\nu_f
- \nu_{2r_3}$. Pulling back (\ref{eqn:BETA}) with $F$, we get
 $$
 \Phi_r-\Phi_s=d (F^*\eta)\, ,
 $$
where $F^*\eta \in \Omega_{cv}^{n-k-1}(\nu_{r_1})$. We can apply
$j_*$ to this equality to get
 $$
 j_*\Phi_r-j_*\Phi_s=d j_*(F^*\eta)\, ,
 $$
and hence $[j_*\Phi_r]=[j_*\Phi_s]$ in $H^{n-k}(M,\RR)$.

\medskip

Now we want to prove that $[j_*\Phi_r]$ coincides with the dual of
the Ruelle-Sullivan homology class $[f,S_\mu]^*$. Let $\beta$ be any $k$-form
in $\Omega^k(S)$. Consider a cover $\{U_i\}$ of $S$ by flow-boxes
such that the leaves of each flow-box are contained in discs of
radius $r_2$, and let $\{\rho_i\}$ be a partition of unity
subordinated to this cover. Let $\Phi_i=\rho_i\Phi$, which is
supported on $\pi^{-1}(U_i)\cap \nu_f$, and
 $$
 \Phi_{r,i}=\rho_i\Phi_r= T^*_{r/r_0} \Phi_i
 $$
supported on $\pi^{-1}(U_i)\cap \nu_r$. For $0<\epsilon\leq r_1$,
we have
 $$
 \begin{aligned}
  \int_M j_*\Phi_{\epsilon,i} \wedge \beta &=
  \int_M  \left(  \int_{K(U_i)} (j_{i,y})_* (\Phi_{\epsilon,i|A_y^\epsilon})
  \, d\mu_{K(U_i)}(y)\right)  \wedge\beta \\
  &=   \int_{K(U_i)} \left( \int_M (j_{i,y})_* (\Phi_{\epsilon,i|A_y^\epsilon})
  \wedge\beta \right)\,
  d\mu_{K(U_i)}(y) \\
  &=  \int_{K(U_i)} \left( \int_{A_y^\epsilon} \Phi_{\epsilon,i}\wedge
  j_{i,y}^*\beta \right)\, d\mu_{K(U_i)}(y)\, ,
  \end{aligned}
 $$
where $A_y^\epsilon=D^{n-k}_\epsilon \x D^k\x \{y\}\subset
\pi^{-1}(U_i)$ for $y\in K(U_i)$, and $j_{i,y}=j_{|A_y^\epsilon}$.

In coordinates $(v_1,\ldots, v_{n-k},x_1,\ldots, x_k,y)$ for
$\pi^{-1}(U_i)\cong \RR^{n-k}\x D^k\x K(U_i)$, we can write
 $$
 \Phi=\Phi(v,x,y)= g_0 \,dv_{1}\wedge \cdots \wedge dv_{n-k} +
 \sum_{|I|>0} g_{IJ}\, dx_I\wedge dv_J\, ,
 $$
where $g_0$, $g_{IJ}$ are functions, and $I=\{i_1,\ldots,
i_a\}\subset \{1,\ldots, n-k\}$ and $J=\{j_1,\ldots, j_b\}\subset
\{1,\ldots, k\}$ multi-indices with $|I|=a$, $|J|=b$, $a+b=n-k$.
Pulling-back via $T=T_{\epsilon/r_0}$, we get
 \begin{equation}\label{eqn:Phi}
 \begin{aligned}
 \Phi_\epsilon &= \left(\frac{\epsilon}{r_0}\right)^{-(n-k)}
 \left( (g_0\circ T)
 \,dv_{1}\wedge \cdots \wedge dv_{n-k} + \sum_{|I|>0} \left(\frac{\epsilon}{r_0}\right)^{|I|} (g_{IJ}\circ T)
 \,  dx_I\wedge dv_J\right)  \\
 &=\left(\frac{\epsilon}{r_0}\right)^{-(n-k)}
 \left( (g_0\circ T) \,dv_{1}\wedge \cdots \wedge dv_{n-k} + O(\epsilon)\right) \,
 ,
 \end{aligned}
  \end{equation}
since $|g_{IJ}\circ T|$ are uniformly bounded. Note that the support
of $\Phi_{\epsilon|\RR^{n-k}\x D^k\x \{y\}}$ is inside
$D^{n-k}_\epsilon \x D^k\x \{y\}$.

Also write
 $$
 j_{i,y}^*\beta (v,x) =
 h_0(x,y)\, dx_1\wedge \ldots \wedge dx_k + \sum_{|J|>0}
 h_{IJ}(x,y)\, dx_I\wedge dv_J + O(|v|) \, ,
 $$
and note that $f^*\beta_{|D^k\x \{y\}} = h_0(x,y)\, dx_1\wedge
\ldots \wedge dx_k$.

So
 $$
  \begin{aligned}
  \int_{A_y^\epsilon} & \Phi_{\epsilon,i}\wedge j_{i,y}^*\beta =
  \int_{\RR^{n-k}\x D^k} \rho_i \Phi_{\epsilon}\wedge j_{i,y}^*\beta \\ &=
  \left(\frac{\epsilon}{r_0}\right)^{-(n-k)} \left( \int_{\RR^{n-k} \x D^{k}}
  \rho_i (g_0\circ T) \,dv_{1}\wedge \cdots \wedge dv_{n-k} \wedge
  j_{i,y}^*\beta + O(\epsilon^{n-k+1} ) \right) \\ &=
  \left(\frac{\epsilon}{r_0}\right)^{-(n-k)}  \bigg( \int_{\RR^{n-k}\x D^{k}}
  \rho_i (g_0\circ T) \,dv_{1} \wedge \cdots \wedge dv_{n-k} \wedge
  (h_0\, dx_1\wedge \ldots \wedge dx_k + O(|v|)) \bigg) +  O(\epsilon)
  \\ &=
  \left(\frac{\epsilon}{r_0}\right)^{-(n-k)}  \left( \int_{\RR^{n-k}\x D^{k}}
  \rho_i h_0 (g_0\circ T) \,dv_{1} \wedge \cdots \wedge dv_{n-k} \wedge
  dx_1\wedge \ldots \wedge dx_k  \right)+  O(\epsilon) \\ &=
  \int_{D^{k}}
  \rho_i h_0 \,dx_1\wedge \ldots \wedge dx_k   +  O(\epsilon) \\ &=
  \int_{D^k\x \{y\}} \rho_i  \,
  f^*\beta_{|D^k\x \{y\}}  + O(\epsilon)\, .
  \end{aligned}
 $$
The second equality holds since $|\rho_i|\leq 1$, $|j_{i,y}^*\beta|$
is uniformly bounded, and the support of $\rho_i (g_{IJ}\circ T) \,
dx_I\wedge dv_J \wedge j_{i,y}^*\beta$ is contained inside
$D^{n-k}_\epsilon \x D^k$, which has volume $O(\epsilon^{n-k})$. In
the fourth line we use that $|v|\leq \epsilon$ and
 $$
 \left(\frac{\epsilon}{r_0}\right)^{-(n-k)}  \int_{\RR^{n-k}}
 (g_0\circ T) \,dv_{1} \wedge \cdots \wedge dv_{n-k} =
 \int_{\nu_{f,p}} T^* \Phi = 1.
 $$
The same equality is used in the fifth line.

Adding over all $i$, we get
  $$
  \begin{aligned}
  \la [j_* \Phi_\epsilon], [\b]\ra =
  \int_M j_*\Phi_\epsilon\wedge \b 
  &= \sum_i
  \int_M j_*\Phi_{\epsilon,i} \wedge \beta \\ &=
  \sum_i  \int_{K(U_i)} \left( \int_{A_y^\epsilon} \Phi_{\epsilon,i}\wedge
  j_{i,y}^*\beta \right)\, d\mu_{K(U_i)}(y)
  \\ &= \sum_i \int_{K(U_i)} \left(  \int_{D^k\x \{y\}} \rho_i  \,
  f^*\beta|_{D^k\x \{y\}}  + O(\epsilon) \right)
  d\mu_{K(U_i)}(y) \\ &= \la [f, S_\mu], \b \ra + O(\epsilon)\, .
  \end{aligned}
  $$
Taking $\epsilon\to 0$, we get that
 $$
 [j_*\Phi_r] = \lim_{\epsilon\to 0}\  [j_* \Phi_\epsilon]=[f,S_\mu]^*\,
 ,
 $$
for all $0<r<r_1$.
\end{proof}

\medskip

\noindent {\bf Case $M$ non-compact.}


For $M$ non-compact, we have the isomorphism $H_k(M,\RR)\cong
H^{n-k}_c(M,\RR)$, where $H_c^*(M,\RR)$ denotes compactly
supported cohomology of $M$. Then the Ruelle-Sullivan homology class
$[f,S_\mu]$ of an immersed oriented measured
solenoid $f:S_\mu \to M$ gives an element
  $$
  [f,S_\mu]^* \in H^{n-k}_c(M,\RR)\, .
  $$
The construction of the proof of proposition \ref{prop:RS-form}
gives a smooth compactly supported form $j_*\Phi_r$ on $M$, for
$r$ small enough, with
  $$
  [j_*\Phi_r]=[f,S_\mu]^* \in H^{n-k}_c(M,\RR)\, .
  $$

\medskip

\noindent {\bf Case $M$ non-oriented.}


For $M$ non-oriented, let $\fro$ be the local system defining the
orientation of $M$. Let $f:S_\mu \to M$ be an immersed oriented
measured solenoid. Then both $[f,S_\mu]$ and $[j_*\Phi_r]$ are classes
which correspond under the isomorphism
 $$
 H_k(M,\RR) \cong H^{n-k}_c(M,\fro)\, .
 $$
The same proof as above shows that they are equal.

\bigskip

We end up this section with an easy remark on the case of a complex solenoid
immersed in a complex manifold.
If $M$ is a complex manifold, an immersed solenoid
$f:S\to M$ is complex if the leaves of $S$ have a (transversally
continuous) complex structure, and $f$ is a holomorphic immersion
on every leaf $l\subset S$. Note that $S$ is automatically
oriented.

\begin{proposition}\label{prop:complex}
Let $M$ be a compact complex K\"ahler manifold. Let $f:S_\mu\to M$ be
an embedded complex $k$-solenoid endowed with a transversal
measure. Then
 $$
 [f,S_\mu]^* \in H^{p,p}(M) \cap H^{2n-k}(M,\RR) \subset H^{2n-k}(M,\CC)\, ,
 $$
where $k=2(n-p)$.
\end{proposition}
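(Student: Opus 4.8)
The plan is to detect the Hodge type of $\eta := [f,S_\mu]^*$ by pairing it against closed forms of pure type, mimicking the classical proof that the fundamental class of a complex analytic cycle lies in $H^{p,p}$. Since $M$ is compact Kähler, in every degree $m$ we have the Hodge decomposition $H^m(M,\CC)=\bigoplus_{a+b=m}H^{a,b}(M)$, and the cup-product pairing $H^{a,b}(M)\times H^{n-a,n-b}(M)\to\CC$ given by $([\alpha],[\omega])\mapsto\int_M\alpha\wedge\omega$ is nondegenerate. The class $\eta$ lives in $H^{2n-k}(M,\RR)$ with $2n-k=2p$, so after complexification $\eta=\sum_{a+b=2p}\eta^{a,b}$, and it suffices to prove that $\eta^{a,b}=0$ for every $(a,b)\neq(p,p)$.

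The crucial ingredient is a pointwise type computation along the leaves. By Proposition \ref{prop:RS-form} (and the definition of the generalized current), for any closed $k$-form $\omega$ on $M$ one has $\int_M\eta\wedge\omega=\langle[f,S_\mu],[\omega]\rangle$, and this last number is computed leafwise as the integral of $f^*\omega$ over the leaves against the transversal measure. Now take $\omega$ of pure Hodge type $(c,d)$ with $c+d=k=2(n-p)$. Because $f$ is a holomorphic immersion on each leaf, $f^*\omega$ restricted to a leaf $L$ is a form of type $(c,d)$ on the complex $(n-p)$-dimensional manifold $L$; since $c+d=2(n-p)$ equals the real dimension of $L$, such a form is of top degree and therefore vanishes identically unless $c\le n-p$ and $d\le n-p$, that is, unless $(c,d)=(n-p,n-p)$. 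The integrand thus vanishes leaf by leaf before any integration against $\mu$, so $\langle[f,S_\mu],[\omega]\rangle=0$ for every closed $(c,d)$-form with $(c,d)\neq(n-p,n-p)$.

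Combining the two observations finishes the argument. Writing $\eta=\sum\eta^{a,b}$ and using that $\int_M\eta\wedge\omega$ only sees the component of $\eta$ of complementary type, we get, for $\omega$ of type $(c,d)$, the identity $\int_M\eta^{n-c,n-d}\wedge\omega=0$ whenever $(c,d)\neq(n-p,n-p)$. Since on a compact Kähler manifold every class in $H^{c,d}(M)$ has a closed harmonic representative of pure type $(c,d)$, the nondegeneracy of the pairing forces $\eta^{n-c,n-d}=0$ for all $(c,d)\neq(n-p,n-p)$. Setting $(a,b)=(n-c,n-d)$, this is exactly $\eta^{a,b}=0$ for all $(a,b)\neq(p,p)$ with $a+b=2p$, whence $\eta=\eta^{p,p}\in H^{p,p}(M)$, as claimed.

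The routine inputs here are standard compact Kähler Hodge theory: the decomposition in each degree and the perfect cup-product pairing between complementary types. The one step that genuinely needs care is the leafwise type computation, where one must check that the measured-solenoid averaging does not interfere with the vanishing of $f^*\omega$ on leaves of the wrong type. This is the main conceptual obstacle, but it dissolves immediately once one notes that the type obstruction is pointwise on each leaf, so the integrand vanishes before integration against the transversal measure; holomorphicity of $f$ along the leaves, rather than embeddedness, is the structural fact being exploited.
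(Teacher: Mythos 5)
Your proposal is correct and follows essentially the same route as the paper: both arguments reduce the statement, via Poincar\'e duality and the Hodge decomposition, to showing $\la [f,S_\mu],[\omega]\ra=0$ for closed forms $\omega$ of pure type $(c,d)$ with $c+d=k$ and $c\neq d$, and both establish this by the pointwise observation that leafwise holomorphicity of $f$ forces $f^*\omega$ to vanish identically on each leaf (the paper counts $dz$'s in coordinates, you argue by type on the complex $(n-p)$-dimensional leaves — the same computation). The only difference is that you spell out the nondegenerate-pairing step that the paper declares as an immediate equivalence.
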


\begin{proof}
 For a compact K\"ahler manifold, we have the Hodge decomposition
 of the cohomology into components of $(p,q)$-types,
  $$
  H^{2n-k}(M,\CC)= \bigoplus_{p+q=2n-k} H^{p,q}(M) \, .
  $$
 The statement of the
 proposition is equivalent to the vanishing of
  $$
  \la [f,S_\mu], [\alpha] \ra\, ,
  $$
 for any $[\alpha]\in H^{p,q}(M)$, $p+q=k$, with $p\neq q$. But
 for any
 $\alpha\in \Omega^{p,q}(M)$, we have that $f^*\alpha \equiv 0$. This is easy to
 see as follows: pick local coordinates $(w_1,\ldots, w_n)$ for $M$ and
 consider $\alpha=dw_{i_1}\wedge \ldots dw_{i_p}\wedge
 d\bar{w}_{j_1}\wedge \ldots d\bar w_{j_q}$. Suppose that $p>q$,
 so that $p>k'$, $k=2k'$. Locally
 $f$ is written as $f: U=D^{2k'} \x K(U)\to M$,
 $(w_1,\ldots, w_n)= f(z_1,\ldots,z_{k'}, y)$, with $f$ holomorphic
 with respect to $(z_1,\ldots, z_{k'})\in \CC^{k'}$. 
 Clearly $f^*\alpha$ contains $p$ differentials $dz_i$'s and $q$
differentials $d\bar z_j$'s.
 As $p>k'$, we have that $f^*\alpha=0$. The case $p<q$ is similar.
\end{proof}

\section{Self-intersection of embedded solenoids} \label{sec:embedded}

Let $M$ be a compact oriented manifold, and consider an \emph{embedded} oriented
measured solenoid $f:S_\mu\to M$. In this section, we want to prove that
the self-intersection of the generalized current is zero.

\begin{theorem}\label{thm:self-intersection} \textbf{ \em
(Self-intersection of embedded solenoids)} Let $M$ be a compact,
oriented, smooth manifold. Let $f:S_\mu\to M$ be an embedded oriented measured
solenoid, such that the transversal measures
$(\mu_T)$ have no atoms. Then we have
 $$
 [f,S_\mu]^* \cup [f,S_\mu]^*=0
 $$
in $H^{2(n-k)}(M,\RR)$.
\end{theorem}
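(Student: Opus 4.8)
The plan is to represent the dual class by the smooth closed form $\omega_r = j_*\Phi_r\in\Omega^{n-k}(M)$ produced in Proposition \ref{prop:RS-form}, so that $[\omega_r]=[f,S_\mu]^*$ for every $0<r<r_1$, and then to test its square against cohomology. First I would dispose of two trivial cases: if $n-k$ is odd then $\omega_r\wedge\omega_r=0$ as a form, since an odd-degree form anticommutes with itself, and if $2k<n$ then $H^{2(n-k)}(M,\RR)=0$; so I assume $n-k$ even and $2k\ge n$. Since $M$ is compact and oriented, Poincar\'e duality makes the pairing $H^{2(n-k)}(M,\RR)\times H^{2k-n}(M,\RR)\to H^n(M,\RR)\cong\RR$ non-degenerate, so it would suffice to show
\[
  \int_M \omega_r\wedge\omega_r\wedge\theta = 0
\]
for every closed $\theta\in\Omega^{2k-n}(M)$.

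The crucial step is an identity reducing this integral over $M$ to an integral over the solenoid. Because $\theta$ and $\omega_r$ are closed, $\gamma:=\omega_r\wedge\theta$ is a closed $k$-form, and since $[\omega_r]=[f,S_\mu]^*$ is the Poincar\'e dual of $[f,S_\mu]$, the defining property of the dual together with Proposition \ref{prop:RS-push-forward} and Definition \ref{def:Ruelle-Sullivan} would give
\[
  \int_M \omega_r\wedge\omega_r\wedge\theta = \int_M \omega_r\wedge\gamma = \langle (f,S_\mu),\gamma\rangle = \int_S f^*\omega_r\wedge f^*\theta .
\]
In particular this quantity is independent of $r\in(0,r_1)$, so it is enough to prove that it tends to $0$ as $r\to0$.

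The remaining work is to estimate $f^*\omega_r$ restricted to the leaves of $S$. I would fix a flow-box $U\cong D^k\times K(U)$ and a point $f(p)$ on a leaf $L_{y_0}$, and write $j_*\Phi_r=\int_{K(U)}(j_y)_*(\Phi_r|_{\mathrm{slice}})\,d\mu_{K(U)}(y)$; only leaves whose slice passes within distance $r$ of $f(p)$ contribute, i.e. those with $|y-y_0|\le Cr$ in the transversal. Here the embedding hypothesis is essential: since $f$ is injective, for small $r$ the only sheets of $f(S)$ meeting the $r$-ball about $f(p)$ are these transversally nearby ones, with no contribution from a distinct sheet crossing through $f(p)$. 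Moreover the leading part of the Thom form, the fibrewise volume form scaled by $r^{-(n-k)}$, annihilates vectors tangent to the leaves, so its restriction to $T_pL_{y_0}$ stays $O(1)$ rather than $O(r^{-(n-k)})$, while the remaining terms of $\Phi_r$ are already uniformly bounded. Thus each contributing leaf adds an $O(1)$ amount and
\[
  \bigl| f^*\omega_r|_{L_{y_0}}(f(p))\bigr|\le C'\,\mu_{K(U)}\bigl(\{\,|y-y_0|\le Cr\,\}\bigr).
\]
As $r\to0$ the right-hand side decreases to $C'\mu_{K(U)}(\{y_0\})=0$, because the transversal measures have no atoms, and it is bounded uniformly in $r$ by the finite total mass. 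Since $f^*\theta$ is bounded on the compact solenoid, dominated convergence then yields $\int_S f^*\omega_r\wedge f^*\theta\to0$; combined with the $r$-independence above, the integral vanishes for every closed $\theta$, whence $[f,S_\mu]^*\cup[f,S_\mu]^*=0$.

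The step I expect to be the main obstacle is the uniform boundedness asserted above: although each $\Phi_r$ blows up like $r^{-(n-k)}$, its pullback to the $k$-dimensional leaves must stay $O(1)$ because the top fibrewise factor $dv_1\wedge\cdots\wedge dv_{n-k}$ vanishes on leaf-tangent vectors, the lost powers of $r$ being exactly compensated. Making this cancellation precise, uniformly over the covering flow-boxes and over $y$ (and controlling the curvature corrections coming from the exponential chart of $\nu_f$), is the technical heart; once it is in hand, the non-atomicity of $\mu$ supplies the vanishing through the shrinking transversal neighborhoods.
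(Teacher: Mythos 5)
Your proposal is correct and follows essentially the same route as the paper's proof: represent one copy of the dual class by the form $j_*\Phi_r$ from Proposition \ref{prop:RS-form}, pair it against the generalized current as $\la (f,S_\mu), j_*\Phi_r\wedge\theta\ra$, observe that the restriction of $j_*\Phi_r$ to a leaf is $O(1)$ per transversally $Cr$-near sheet because every term carrying the $r^{-(n-k)}$ blow-up also carries $dv$-factors that restrict to $O(r)$ each on leaf-tangent directions (note this cancellation is needed for the mixed terms $dx_I\wedge dv_J$ with $0<|J|<n-k$ as well, which are \emph{not} uniformly bounded as forms, contrary to one sentence of your outline --- but your own mechanism handles them), so that the pairing is controlled by the transversal mass of $Cr$-neighborhoods of the diagonal, which vanishes by non-atomicity; the embedding hypothesis enters exactly as you say, to exclude contributions from distinct sheets. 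The only divergence from the paper is bookkeeping in the final limit: you fix a flow-box cover and let $r\to 0$ using dominated convergence (i.e. $(\mu\times\mu)(\Delta)=0$), whereas the paper fixes $\epsilon$ small and refines the cover, bounding the ($\epsilon$-independent) pairing by $C\max_i \mu_{K(U_i)}(K(U_i))\to 0$ --- the same estimate organized differently.
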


\begin{proof}
If $n-k>k$ then $2(n-k)>n$, therefore the self-intersection is $0$ by degree reasons.
So we may assume $n-k\leq k$.

Let $\b$ be any closed $(n-2(n-k))$-form on $M$. We must
prove that
 $$
 \la [f,S_\mu]^*\cup [f,S_\mu]^*\cup [\b],[M]\ra =0\,,
 $$
where $[M]$ is the fundamental class of $M$.
By proposition \ref{prop:RS-form},
 $$
 \la [f,S_\mu]^*\cup [f,S_\mu]^*\cup [\b],[M]\ra =
 \la [f,S_\mu], j_*\Phi_\epsilon\wedge \b\ra\, ,
 $$
for $\epsilon>0$ small enough.

Consider a covering of $f(S)\subset M$ by open sets $\hat U_i\subset
M$ and another covering of $f(S)$ by open sets $\hat V_i\subset M$
such that the closure of $\hat{V}_i$ is contained in $\hat U_i$. We
may assume that the covering is chosen so that $\{V_i=f^{-1}(\hat
V_i) \}$ satisfies the properties needed for computing
$j_*\Phi_\epsilon$ locally (the auxiliary Riemannian structure is
used). Let $\{\rho_i\}$ be a partition of unity of $S$ subordinated to
$\{V_i \}$ and decompose $\Phi_\epsilon=\sum \Phi_{\epsilon,i}$ with
$\Phi_{\epsilon,i}=\rho_i \, \Phi_\epsilon$. We take $\epsilon>0$
small enough so that $j(\supp \Phi_{\epsilon,i})\subset \hat{U}_i$.
Then
 $$
  \la [f,S_\mu]^*\cup [f,S_\mu]^*\cup [\b],[M]\ra=
 \la [f,S_\mu], j_*\Phi_\epsilon\wedge \b\ra\
  =  \sum_i\la [f,S_\mu], j_*\Phi_{\epsilon,i} \wedge \b\ra\,
 .
 $$

Since $f$ is an embedding, we may suppose the open sets
$U_i=f^{-1}(\hat U_i)$ are flow-boxes of $S$. Therefore
 $$
 \la [f,S_\mu], j_*\Phi_{\epsilon,i} \wedge \b \ra =\int_{K(U_i)} \left(\int_{L_y}
 f^*( j_* \Phi_{\epsilon,i}\wedge \beta) \right) d\mu_{K(U_i)}(y)\,
 .
 $$
We may compute
 $$
 \begin{aligned}
 \int_{L_y} f^*(j_* \Phi_{\epsilon,i}\wedge \beta )&= \int_{f(L_y)}
 \left(\int_{K(V_i)}(j_{i,z})_*\Phi_{\epsilon,i}
 \wedge \beta \, d\mu_{K(V_i)}(z) \right) \\ &= \int_{K(V_i)} \left( \int_{f(L_y)}
 (j_{i,z})_*\Phi_{\epsilon,i} \wedge \beta \right) d\mu_{K(V_i)}(z)\, .
 \end{aligned}
 $$
Note that $(j_{i,z})_* \Phi_{i,\epsilon} |_{f(L_y)}$ consists of
restricting the form $\Phi_{i,\epsilon}$ to $\pi^{-1}(L_z)$, the
normal bundle over the leaf $L_z$, then sending it to $M$ via $j$,
and finally restricting to the leaf $f(L_y)$.

Since $f$ is an embedding, we may suppose that in a local chart
$f:U_i=D^k\x K(U_i) \to \hat U_i\subset M$ is the restriction of a
map (that we denote with the same letter) $f: D^k\x B \to \hat U_i$,
where $B\subset \RR^l$ is open and $K(U_i)\subset B$, which in suitable
coordinates for $M$ is written as $f(x,y)=(x,y,0)$. The map $j$
extends to  a map from the normal bundle to the horizontal foliation
of $D^k\x B$, as $j: D^{n-k}_\epsilon \x D^k \x B \to M$,
 $$
 j(v,x,z)= (x_1,\ldots, x_k, z_1+v_1,\ldots, z_l+v_l, v_{l+1},
 \ldots, v_{n-k}) +O(|v|^2)\, .
 $$

Using the formula for $\Phi_{\epsilon}$ given in \eqref{eqn:Phi}, we
have
 $$
  (j_{i,z})_* \Phi_\epsilon (x,y) = \sum_{|I|+|J|=n-k}
  \left(\frac{\epsilon}{r_0}\right)^{|I|-(n-k)}
  (g_{IJ}\circ T)(x, y-z) \,  dx_I\wedge dy_J + O(|y-z|)\, .
 $$
We restrict to $L_y$, and multiply by $\beta$, to get
 $$
 ((j_{i,z})_* \Phi_{\epsilon,i}\wedge \beta)|_{L_y} =
 \sum_{|I|=n-k} (\rho_i \cdot (g_{I0}\circ T))(x,y-z)
 \,  dx_I \wedge \beta
 + O(|y-z|) \, ,
 $$
which is bounded by a universal constant.

Hence
 $$
 |\la [f, S_\mu], j_*\Phi_{\epsilon,i}\wedge\b\ra| \leq
 C_0\ \mu_{K(U_i)}(K(U_i))\ \mu_{K(V_i)}(K(V_i))\leq C_0 \
 \mu_{K(U_i)}(K(U_i))^2 \,,
 $$
where $C_0$ is a constant that is valid for any refinement of the
covering $\{U_i\}$. So
 $$
 |\la [f, S_\mu], j_*\Phi_\epsilon \wedge\b \ra| \leq
 C_0 \sum_i \ \mu_{K(U_i)}(K(U_i))^2\,.
 $$

Observe that $\mu_{K(U_i)}(K(U_i))\leq C_1 \, \mu (U_i)$ and that
$\sum_i \mu (U_i)\leq C_2$ for some positive constants $C_1$ and
$C_2$ independent of the refinements of the covering. Therefore,
 $$
 \begin{aligned}
 |\la [f, S_\mu], j_*\Phi_\epsilon \wedge\b \ra| &\,\leq
 C_0  (\max_i \mu_{K(U_i)} (K(U_i)) )\ C_1 \sum_i \ \mu (U_i) \\
 &\,\leq C_0 C_1 C_2 \max_i \mu_{K(U_i)} (K(U_i))\,.
  \end{aligned}
 $$
When we refine the covering, if the transversal measures have no
atoms, we clearly have that $\max_i \mu_{K(U_i)} (K(U_i)) \to 0$ and then
$$
 \la [f, S_\mu], j_*\Phi_\epsilon \wedge\b \ra =0\,,
 $$
as required.
\end{proof}

Note that for a compact solenoid, atoms of transversal measures must
give compact leaves (contained in the support of the atomic part),
since otherwise at the accumulation set of the leaf we would have a
transversal $T$ with $\mu_T$ not locally finite. In particular if
$S$ is a minimal solenoid which is not a $k$-manifold, then all
transversal measures have no atoms. Therefore, the existence
of transversal measures with atomic part is equivalent to the
existence of compact leaves. This observation gives the following
corollary.

\begin{corollary}\label{cor:self-intersection-no-compact-leaves}
Let $M$ be a compact, oriented, smooth manifold. Let $f:S\to M$ be an
embedded oriented solenoid, such that $S$
has no compact leaves. Then for any tranversal measure $\mu$, we
have
 $$
 [f,S_\mu]^* \cup [f,S_\mu]^*=0
 $$
in $H^{2(n-k)}(M,\RR)$.
\end{corollary}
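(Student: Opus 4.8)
The plan is to reduce the corollary directly to Theorem \ref{thm:self-intersection}. That theorem already yields vanishing of the self-intersection under the hypothesis that the transversal measures $(\mu_T)$ have no atoms. So the only thing to prove is that, when $S$ has no compact leaves, \emph{every} transversal measure $\mu$ is automatically atomless; the conclusion is then immediate.

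I would prove the atomless property by contradiction. Suppose $\mu_T(\{p\})=c>0$ for some local transversal $T$ and some point $p\in T$, and let $L$ be the leaf through $p$. The first step is to propagate the atom along $L$ using holonomy invariance: if $q\in L$ lies on another local transversal $T'$, then a path in $L$ from $p$ to $q$ produces a holonomy germ $(T,p)\to(T',q)$, and Definition \ref{def:transversal-measure} forces $\mu_{T'}(\{q\})=c$ as well. Thus every point of $L$ meeting a transversal is an atom of the same mass $c$.

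The key step is then to exploit compactness of $S$. If $L$ is not compact (equivalently, not closed in $S$), it has an accumulation point $x\in S$; choosing a local transversal $T_0$ through $x$, the leaf $L$ meets a neighborhood of $x$ in infinitely many plaques, hence intersects $T_0$ in infinitely many distinct points $p_1,p_2,\ldots\to x$. By the previous step each $p_i$ is an atom of $\mu_{T_0}$ of mass $c$, so every neighborhood of $x$ in $T_0$ contains infinitely many of the $p_i$ and therefore has infinite $\mu_{T_0}$-mass, contradicting the local finiteness required of transversal measures. Hence any leaf carrying an atom must be compact. This local-finiteness argument, which is exactly the remark preceding the corollary, is the only real obstacle; it rests on compactness of $S$ to produce the accumulation and on holonomy invariance to replicate the atom.

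Since $S$ has no compact leaves by hypothesis, the above shows that no $\mu_T$ can have an atom, so every transversal measure is atomless. Theorem \ref{thm:self-intersection} then applies to the embedded oriented measured solenoid $f:S_\mu\to M$ and gives $[f,S_\mu]^*\cup[f,S_\mu]^*=0$ in $H^{2(n-k)}(M,\RR)$, completing the argument.
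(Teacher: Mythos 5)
Your proposal is correct and is essentially the paper's own argument: the paper proves this corollary exactly by the remark preceding it, namely that an atom of a transversal measure would propagate by holonomy invariance along its (non-compact) leaf and accumulate on a transversal, violating local finiteness, so absence of compact leaves forces all transversal measures to be atomless and Theorem \ref{thm:self-intersection} applies. Your write-up just makes that remark's accumulation argument explicit, so there is nothing to add.
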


\medskip

\begin{remark}
We observe that if we want to represent a homology class $a\in
H_k(M,\RR)$ by an immersed solenoid in an $n$-dimensional manifold
$M$ and $a\cup a\neq 0$, then the solenoid cannot be embedded.
Note that when $n-k$ is odd, there is no
obstruction. We shall see in \cite{MPM4} that 
we can always obtain a transversally immersed solenoid representing $a$,
for any homology class $a \in H_k(M,\RR)$.
\end{remark}

\end{document}